\documentclass[reqno]{amsart}

\usepackage[ruled,vlined,norelsize,noend]{algorithm2e}
\usepackage{tikz-cd}
\usepackage{pgfplots}
\usepackage{tikz}
 \usetikzlibrary{calc,angles,quotes}
\usepackage{pgf}
\usepackage[bb=dsserif]{mathalpha} 
\usepackage{bm}
\usepackage{xurl}
\usepackage[all,hyperref,numberbysection]{bi-discrete}
\usepackage[backend=biber,maxbibnames=5,maxalphanames=5,style=alphabetic,bibencoding=utf8,giveninits,url=false,isbn=false,doi=true,eprint=true]{biblatex}
\renewbibmacro{in:}{}
\AtBeginBibliography{\small}

\numberwithin{equation}{section}
\numberwithin{figure}{section}
\numberwithin{table}{section}

\providecommand{\dsig}{\, \mathrm{d}\sigma(x)}
\providecommand{\dx}{\, \mathrm{d}x}
\providecommand{\dxhat}{\, \mathrm{d}\hat{x}}
\providecommand{\tria}{\mathcal{T}}
\providecommand{\vertices}{\mathcal{V}}
\providecommand{\edges}{\mathcal{E}}
\providecommand{\facets}{\mathcal{F}}
\providecommand{\tr}{\operatorname{tr}}

\providecommand{\interior}{\operatorname{int}}
\providecommand{\support}{\operatorname{supp}}
\providecommand{\BF}{{\operatorname{B_{\mathcal{F}}}}}
\providecommand{\BFp}{{\operatorname{B^\partial_{\mathcal{F}}}}}
\def\N{\mathbb{N}}
\def\Lpmeanfree{L_0^p}

\def\R{\mathbb{R}}

\providecommand{\ohz}{\omega_z}
\providecommand{\Ohz}{\Omega_z}
\providecommand{\Ehz}{\edges_z}
\providecommand{\Wpn}{{W^{1,p}_n(\Omega)}}
\providecommand{\Wpsim}{{W^{1,p}_\sim(\Omega)}}
\renewcommand{\identity}{I}
\newcommand{\infsupdiscr}{{\beta_*}}

\usepackage{todonotes}

\usepackage{a4wide}
\definecolor{cobalt}{rgb}{0.0, 0.28, 0.67}
\definecolor{darkcerulean}{rgb}{0.03, 0.27, 0.49}
\hypersetup{citecolor=darkcerulean,linkcolor=darkcerulean,    urlcolor=darkcerulean}
\usepackage{enumitem}
\usepackage{listings}
\usepackage[thinlines]{easytable}
\usepackage[font=footnotesize,margin=0.2cm]{caption}
\usepackage{subcaption}
\begin{document}

\title[Construction of Fortin operators]{Construction of trace-preserving Fortin operators
}

\author[Eickmann]{Franziska Eickmann}
\author[Tscherpel]{Tabea Tscherpel}

\address[F.\ Eickmann]{Department of Mathematics, Technische Universit\"at Darmstadt, Dolivostraße 15, 64293 Darmstadt, Germany}
\email{eickmann@mathematik.tu-darmstadt.de}

\address[T.\ Tscherpel]{Department of Mathematics, Technische Universit\"at Darmstadt, Dolivostraße 15, 64293 Darmstadt, Germany}
\email{tscherpel@mathematik.tu-darmstadt.de}

\subjclass[2020]{
	65N12,  
	65N15, 
	65N30, 
	76D07,   
	76M10,
	35R37
}

\date{\today}

\maketitle

\begin{abstract}
We present a unifying framework to construct local Fortin operators for conforming mixed finite element pairs for the Stokes equations. 
The operators are constructed to satisfy the divergence-preservation property, local stability and approximation properties, and certain trace-preservation properties. 
	For the latter, some of the finite element pairs require an enrichment of the velocity space. 
	We present the construction for the $P_d-P_0$ element, the Bernardi--Raugel element, the conforming Crouzeix--Raviart element, a modified MINI element and generalised Taylor--Hood elements. 
	Furthermore, we discuss implications of the existence of such a Fortin operator beyond inf-sup stability. 
	These include applications to non-Newtonian fluid flow, problems with inhomogeneous Dirichlet boundary conditions, as well as uniform inf-sup stability relevant for domain approximation and moving domains. 
\end{abstract}

\textbf{Keywords.} Fortin operator, construction, inf-sup stability, approximation properties, domain approximation 



\section{Introduction}

In the design of mixed finite element methods for the Stokes equations and related incompressible flow problems, inf-sup stability of the pair of discrete velocity and pressure spaces is key for stability and error estimates.  
By the Fortin lemma~\cite{Fortin1977}, provided that the continuous inf-sup condition holds, the discrete inf-sup condition in Hilbert spaces is equivalent to the existence of a bounded linear Fortin operator, that is, an operator that preserves the discrete divergence when tested against functions in the discrete pressure space. 
This is referred to as \emph{divergence-preservation property} in the following. 
This equivalence was extended to Banach spaces in~\cite{Ern2016}, where the operator need not be linear. 
While Fortin operators offer one route to establish inf-sup stability, the converse direction implies that a Fortin operator exists whenever inf-sup stability has been established by some other strategy. 
These abstract results, however, do not in general yield a single operator that is simultaneously stable in~$W^{1,p}(\Omega)$ for all~$p \in (1,\infty)$ and preserves the various trace conditions. 
Instead, for fixed $p \in (1, \infty)$ they guarantee a Fortin operator only  for the particular  homogeneous trace condition associated with the underlying inf-sup condition. 

For many applications, however,  further properties of the Fortin operator are essential, including locality, stability with respect to several $W^{1,p}$-norms and preservation of more general trace conditions. 
Such properties are not provided by the abstract Fortin lemma. 
Consequently, a number of works have developed local constructions of Fortin operators for various $H^1$-conforming finite element pairs, see e.g.~\cite{ArnoldBrezziFortin1984,BernardiRaugel1985,GiraultLions2001,Girault2003,Falk2008,MardalSchoeberlWinther2013,DieningStornTscherpel2022,Scott2021,Parker2025,EickmannGuzmanNeilanEtAl2026} for an incomplete list.
Most of them focus on velocity functions with zero traces, as relevant for fluid problems with homogeneous Dirichlet boundary conditions.

In applications with inhomogeneous Dirichlet or more general boundary conditions, Fortin operators acting on functions with nonzero traces are needed. 
In this setting, one requires not only divergence preservation in the dual of the discrete pressure space, but also suitable trace-preservation properties, possibly restricted to a part of the boundary~\cite[Sec.~6.1.1.6]{Bernardi2024}.  
If the problem is nonlinear, then locality and stability with respect to $W^{1,p}(\Omega)$ for all~$p\in(1,\infty)$ are key features. 
Fortin operators with such properties are an important tool in the numerical analysis of incompressible fluids, for example for non-Newtonian fluids~\cite{BelenkiBerselliDieningEtAl2012,DieningKreuzerSueli2013,SueliTscherpel2020,Jessberger2024} and for general boundary conditions~\cite{Eickmann2025a,	GazcaOrozco2025}. 
We demonstrate how they can be applied to show uniform inf-sup stability, which plays a role in domain approximation~\cite{BaeKim2004,GjerdeScott2022},  moving-domain problems~\cite{BurmanFreiMassing2022,WahlRichterLehrenfeld2022,NeilanOlshanskii2024}, and in fluid-structure interaction~\cite{FreiKnokeSteinbachEtAl2026}. 
In particular, uniform inf-sup stability is required in the analysis of  XFEM~\cite{KirchhartGrossReusken2016} and cutFEM methods~\cite{BurmanClausHansboEtAl2015,BurmanHansboLarsonEtAl2025}. Uniform inf-sup stability has previously been established in~\cite{KirchhartGrossReusken2016,GuzmanOlshanskii2018} using a different approach.
For exactly divergence-free elements, preserving the mean of the normal traces provides a direct way, in particular, to enforce compatibility of the discrete boundary data~\cite{Eickmann2025a}. 

Many constructions of Fortin operators with additional properties proceed by successive correction. 
This means that, starting from a quasi-interpolation operator~$I_h$, one introduces a divergence-correcting operator~$C_h$ and defines the Fortin operator~$\Pi_h$ by 
\begin{align}\label{eq:Fortin-correction}
	(I-\Pi_h)=(I-C_h)(I-I_h),
\end{align}
where~$I$ denotes the identity. 
If both~$I_h$ and~$C_h$ are local, then so is~$\Pi_h$.    
The  construction of~$C_h$ depends on whether the pressure functions are continuous or discontinuous. 
For continuous pressure functions, global integration by parts is available. 
Existing constructions for, e.g., the MINI~\cite{ArnoldBrezziFortin1984,BelenkiBerselliDieningEtAl2012} and the (generalised) Taylor--Hood~\cite{Girault2003,DieningStornTscherpel2022} elements rely on vanishing boundary traces and do not account for trace preservation.  
For discontinuous pressure functions, the discrete divergence preservation is more local, which simplifies the construction of~$\Pi_h$. 
In this case, integration by parts must be applied locally on each simplex, resulting in facet terms involving normal velocity traces. 
These terms are analogous to boundary terms. 
For this reason, standard constructions, e.g., for the Bernardi--Raugel element~\cite{BernardiRaugel1985}, the $P_2-P_0$ element in two dimensions~\cite[Ex.~1]{CrouzeixRaviart1973},~\cite[Sec.~8.4.3]{Boffi2013} and its generalisation $P_d-P_0$ (for dimension $d$), the conforming Crouzeix--Raviart element~\cite{CrouzeixRaviart1973,Girault2003} and the Guzmán--Neilan element~\cite{Guzman2014a,Guzman2014b} naturally extend to trace-preserving operators, although they are often only formulated for homogeneous traces. 
For the Bernardi--Raugel element, for example, using a Scott--Zhang-type quasi-interpolation operator~\cite{ScottZhang1990} based on the degrees of freedom of the element within the construction in~\cite{BernardiRaugel1985} yields a Fortin operator with the desired properties. 
A similar approach can be employed for the conforming Crouzeix--Raviart element~\cite{CrouzeixRaviart1973} if the degrees of freedom are chosen suitably.

In this work, we extend the correction approach~\eqref{eq:Fortin-correction} to a general framework for constructing local, stable, and trace-preserving Fortin operators for a range of $H^1$-conforming finite element spaces. 
We split the operator~$C_h$ into a trace correction, which accounts for boundary and, where needed, interior facet traces, and a divergence correction. 
For discontinuous pressures, this yields a natural extension of standard constructions, whereas elements with continuous pressures require a modification of the correction operators.  
In some low-order cases, this is achieved by enriching the velocity space with suitable boundary facet bubbles. 
We formulate the required properties of  correction operators in an abstract setting where one only has to verify suitable local stability  and divergence-preservation properties, see Proposition~\ref{prop:Chj_operators}. 
This provides a unified framework for existing constructions and for the design of new ones.

Our framework covers conforming finite element pairs, including the Bernardi--Raugel, $P_d-P_0$, conforming Crouzeix--Raviart, (modified) MINI, and Taylor--Hood elements as well as their higher-order extensions. 
An overview of the finite element pairs considered and the properties established is presented in Table~\ref{tbl:fortin-summary}.  
Local trace-preserving Fortin operators for the Scott--Vogelius element in two dimensions have also recently been constructed in~\cite{Parker2025,EickmannGuzmanNeilanEtAl2026}. 
For trace-preserving Fortin operators for three-dimensional non-conforming finite elements based on a de~Rham complex, we refer to~\cite{Ern2026}. 

For finite element pairs with continuous pressure spaces, the trace-preserving framework is, to the best of our knowledge, new. 
In addition, we recast existing constructions in a more constructive form, facilitating further analysis of their properties, in particular for the higher-order Taylor--Hood elements. 
The correction operators are closely related to the local macro-element approach in~\cite{Girault2003}, which is based on the technique originally introduced in~\cite{Stenberg1984}. 
Within our framework, we provide a constructive formulation of this 
approach, and we extend the construction in~\cite{Girault2003,DieningStornTscherpel2022} to trace-preserving operators. 

In addition, we collect several consequences and applications of Fortin operators with these additional properties. 
We review applications to non-Newtonian fluid problems and to problems with inhomogeneous Dirichlet boundary conditions. 
Furthermore, in Theorem~\ref{thm:domain approximation}, we establish uniform discrete inf-sup stability for $p \in (1,\infty)$ in the context of domain approximation and moving-domain problems. 
The proof uses the standard approach presented, e.g., in~\cite[Prop.~5.4.2]{Boffi2013}. 
The argument relies on locality of the Fortin operator and (uniform) continuous inf-sup stability on sequences of John domains, as established in \cite{AcostaDuranMuschietti2006,DieningRuzickaSchumacher2010}, see also~\cite{JiangKauranenKoskela2014}.  
The assumptions on the discrete domain differ from those in~\cite[Thm.~1]{GuzmanOlshanskii2018}, where uniform discrete inf-sup stability for~$p = 2$ is established  for a range of mixed finite element pairs. 
The discussion is intended to illustrate the relevance of the framework rather than to provide an exhaustive account of its applications. 

Finally, we present the construction of Fortin operators for general inf-sup stable finite element pairs. 
The resulting operator is not local,  but it satisfies trace-preservation properties.

\subsubsection*{Outline} In Section~\ref{sec:preliminaries}, we introduce the notation for the relevant function spaces and review preliminaries on mixed finite element pairs. 
Section~\ref{sec:main_results} summarises the main results on constructive Fortin operators and their properties for a range of mixed finite element pairs, and Section~\ref{sec:applications} discusses some of the implications. 
In Section~\ref{sec:localFortin}, we present the local construction principle for Fortin operators. 
We consider finite element pairs with discontinuous pressure spaces in Subsection~\ref{subsec:discont}, including Bernardi--Raugel, $P_d-P_0$, and conforming Crouzeix--Raviart elements.
Pairs with continuous pressure spaces, such as the MINI and Taylor--Hood elements, are addressed in Subsection~\ref{subsec:cont}. 
Finally, in Section~\ref{sec:globalFortin} we discuss a general but global construction of a trace-preserving Fortin operator based on the existence of a Fortin operator for the homogeneous subspaces, as provided by the Fortin lemma.


\section{Preliminaries on mixed finite element methods}\label{sec:preliminaries}
We start by introducing notation and preliminaries on the finite element spaces used in the following. 

The spectral norm of a matrix~$A\in\R^{m\times m}$ with~$m \in \mathbb{N}$  is denoted by~$\norm{A}_2$. 
Furthermore, $\abs{\cdot}$ denotes the Euclidean norm in $\setR^m$ for $m \in \mathbb{N}$.   
We also use it for the $d$-dimensional Lebesgue measure of a subset of $\setR^d$, and for the cardinality of discrete sets. 	

In general, for a function space~$X$ we denote by~$X^d$ the corresponding space of vector-valued functions, for $d \in \mathbb{N}$. 
Moreover, $\identity$ denotes the identity operator.  
For~$p\in [1,\infty]$ and a bounded domain~$M \subset \setR^d$ we denote by~$L^p(M)$ the standard Lebesgue space equipped with norm~$\norm{\cdot}_{L^p(M)}$. 
Denoting the integral mean over~$M$  by 
\begin{align*}
\langle f \rangle_M \coloneqq \dashint_{M} f \dx \coloneqq \abs{M}^{-1} \int_{M} f \dx \qquad \text{ for } f \in L^1(M),
\end{align*}
 the subspace of functions in~$L^p(M)$ with vanishing integral mean is defined as 
\begin{align} \label{eq:def-Lmeanfre}
	\Lpmeanfree(M) &\coloneqq \left\{f\in L^p(M) \colon {\langle f\rangle_M} = 0\right\}. 
\end{align}
Let~$\skp{\cdot}{\cdot}_M$ denote the scalar product on~$L^2(M)$. 
We also use this notation for the pairing of functions in~$L^p(M)$  and $L^{p'}(M)$, for $p \in [1,\infty]$, where  $p'\in [1,\infty]$ is the Hölder conjugate determined by $\tfrac1p + \tfrac{1}{p'}=1$. 
We drop the index $M$ if there is no risk of ambiguity.  

For~$d \geq 2$ let~$\Omega\subseteq \R^d$ be a bounded Lipschitz domain with polyhedral boundary.
We denote by~$W^{1,p}(\Omega)$ for~$p\in[1,\infty]$ the standard Sobolev space on~$\Omega$ with norm~$\norm{\cdot}_{W^{1,p}(\Omega)}$. 
Furthermore, for $p \in [1,\infty]$ we define 
\begin{align}\label{def:W1p0}
	W^{1,p}_0(\Omega)^d = \{ v \in W^{1,p}(\Omega)^d \colon \tr(v) = 0 \text{ on } \partial \Omega\}
\end{align}
with traces in $L^p(\partial \Omega)^d$. 

For $p \in [1,\infty]$ we denote the subspace of functions with zero normal traces and the subspace of functions with zero mean of the normal traces by
\begin{align}
	\Wpn 
	&\coloneqq
	 \{v\in W^{1,p}(\Omega)^d\colon  \tr(v)\cdot n = 0 \text{ on }\partial\Omega \},\label{def:W1pn}\\
	\Wpsim
	 &\coloneqq
	  \left\{v\in W^{1,p}(\Omega)^d\colon  \int_{\partial\Omega} \tr(v)\cdot n \dsig = 0\right\},\label{def:W1psim}
\end{align}
where~$n$ denotes the outer unit normal on~$\partial\Omega$. 
Since~$\Omega$ is a bounded Lipschitz domain, the continuous inf-sup condition holds; see~\cite{Bogovskii1979,DieningRuzickaSchumacher2010}. 
Together with the inclusions~$W^{1,p}_0(\Omega)^d \subset \Wpn \subset  \Wpsim  $ and Poincaré's inequality, this yields the following inf-sup estimates for~$p \in (1,\infty)$: There exists a constant~$\beta = \beta(p)>0$ such that 
\begin{equation}\label{def:inf-sup-cont}
\begin{aligned}
	 \beta \norm{q}_{L^{p'}(\Omega)}
	&\leq  \sup_{v \in W^{1,p}_{0}(\Omega)^d \setminus \{0\}} \frac{\skp{\divergence v}{q}}{\norm{\nabla v}_{L^p(\Omega)}} \\
	&\lesssim  
	\sup_{v \in W^{1,p}_{n}(\Omega) \setminus \{0\}} \frac{\skp{\divergence v}{q}}{\norm{ v}_{W^{1,p}(\Omega)}} \leq 
	\sup_{v \in W^{1,p}_{\sim}(\Omega) \setminus \{0\}} \frac{\skp{\divergence v}{q}}{\norm{ v}_{W^{1,p}(\Omega)}}, 
\end{aligned}
\end{equation}
for all $q\in L_0^{p'}(\Omega)$. 

We denote by~$c>0$ a generic constant which may change from line to line.  
The notation~$A \lesssim B$ means that there is a constant $c>0$   depending only on the quantities specified such that $A \leq c B$. 
Similarly, we write~$A \eqsim B$ if~$A \lesssim B$ and $B \lesssim A$.      

Let us collect the notation and the setup for mixed finite element methods for incompressible fluid equations. 

\subsection*{Triangulations} 
Let~$(\tria_n)_{n \in\N}$ be a family of conforming triangulations of $\overline{\Omega} \subset \setR^d$ consisting of closed~$d$-simplices, see~\cite[Ch.~2, \S~2.1]{Ciarlet2002}. 
Let~$h_n \coloneqq \max_{T\in \tria_n} h_T>0$ denote the maximal mesh size, where~$h_T\coloneqq \diameter(T)$ for all~$T\in \tria_n$. 
For simplicity, in the following we suppress the index~$n$.
Instead, with slight abuse of notation we use~$h$ both to denote the maximal mesh size and to parametrise the triangulations.  
We refer to the resulting sequence of triangulations as~$(\tria_h)_{h}$. 
We define the set of~$(d-1)$-dimensional subsimplices (facets) in~$\tria_h$ as~$\facets(\tria_h)$ and the subset of boundary facets as~$\facets^\partial(\tria_h)\coloneqq \{f\in\facets(\tria_h) \colon f\subset\partial\Omega\}$.
Let~$\vertices(\tria_h)$ denote the set of vertices of the triangulation~$\tria_h$, and for a subset of vertices~$\{z_1, \ldots, z_{\ell}\} \subset \vertices(\tria_h)$, with  {$\ell\in\N$}, we denote their convex hull by~$[z_1, \ldots, z_{\ell}]$. 
We use the same notation also for vertices in a collection of subsimplices. 
By~$\vertices^\circ(\tria_h) \coloneqq \{z \in \vertices(\tria_h) \colon z \notin \partial \Omega\}$ we denote the set of interior vertices of $\tria_h$. We write~$\gamma_T\in\N_0$ for the number of vertices of a simplex~$T\in\tria_h$ that lie in $\Omega$ rather than on~$\partial \Omega$. 
Let~$\edges(\tria_h)$ denote the set of $1$-dimensional subsimplices (edges) in~$\tria_h$. 
For a given vertex~$z \in\vertices(\tria_h)$ let $\Ehz \coloneqq \edges_h(z) \subset \edges(\tria_h)$ denote the set of all edges containing~$z$. 

We denote by~$\rho_T>0$ the radius of the largest ball inscribed in~$T\in\tria_h$. 
We assume that the family of triangulations~$(\tria_h)_{h}$ is \emph{shape-regular}, in the sense that there is a constant~$\chi>0$ such that 
\begin{align}\label{eq:shape-regularity}
	\rho_T \geq \chi h_T \qquad \text{for all } T\in \tria_h \; \text{ uniformly in } h.
\end{align}
For a collection of simplices~${\mathcal{S}_h} \subseteq \tria_h$ we denote its neighbourhood and the domain covered by the neighbourhood by 
\begin{align}
	\omega_h({\mathcal{S}_h})&\coloneqq \{T'\in\tria_h\colon~T'\cap T \neq \emptyset~ \text{for some }T\in{\mathcal{S}_h}\},
	\label{def:neighbourhood_S}
	\\
	\Omega_h({\mathcal{S}_h})&\coloneqq \interior \left(\cup \{T\in\omega_h({\mathcal{S}_h})\}\right) \subseteq \Omega.
	\label{def:neighbourhood-domain_S}
\end{align}
For~$s\in\N_0$ the~$s$-neighbourhood of~${\mathcal{S}_h}$ is defined inductively by 
\begin{align}\label{def:neighbourhood_S-level_s}
	\omega_h^0({\mathcal{S}_h})\coloneqq {\mathcal{S}_h}, \qquad \text{ and } \qquad  \omega_h^s({\mathcal{S}_h}) \coloneqq \omega_h(\omega_h^{s-1}({\mathcal{S}_h})) \qquad \text{for } s\in\N.
\end{align}
Let~$\Omega_h^s({\mathcal{S}_h})\coloneqq \interior \left(\cup\{T\in\omega_h^s({\mathcal{S}_h})\}\right)\subseteq \Omega$ denote the domain covered by the~$s$-neighbourhood of ${\mathcal{S}_h}$.
Analogously, for a facet, edge, or vertex~$\sigma \in\facets(\tria_h) \cup \edges(\tria_h) \cup \vertices(\tria_h)$ we denote 
\begin{alignat}{4}
	\omega_h(\sigma) &\coloneqq \{T\in\tria_h\colon  \sigma\subset T\},\qquad &\Omega_h(\sigma) &\coloneqq \interior (\cup \{T\in\omega_h(\sigma)\}).
	\label{def:neighbourhood_f}
\end{alignat}
For simplicity we also use $\ohz \coloneqq \omega_h(z)$ and $\Ohz \coloneqq \Omega_h(z)$ for $z \in \vertices(\tria_h)$. 

Let $\hat{T}$ denote the reference $d$-simplex in $\R^d$. 
For a given simplex $T \in \tria_h$ let the affine transformation 
\begin{align}\label{def:transform}
	\Phi_T\colon \hat{T}\to T,\qquad\hat{x}\mapsto A\hat{x} + a
\end{align}
be determined by $A\in\R^{d\times d}$ and $a\in\R^d$.  
For later use we recall from~\cite[Ch.~3, \S~3.1]{Ciarlet2002} the following standard scaling properties: 
\begin{equation}\label{eq:scaling}
	\begin{aligned}
		&|\det A|\eqsim h_T^d, \quad \norm{A}_2\eqsim h_T, \quad  \norm{A^{-1}}_2\eqsim h_T^{-1}, \quad \dx = |\det A| \dxhat,\\
		&\nabla \psi (x) = A^{-\top} \hat{\nabla}\hat{\psi}(\hat{x})\quad\text{with}\quad\hat{\psi}=\psi\circ \Phi_T \quad \text{for all }\psi\in W^{1,1}(T).
	\end{aligned}
\end{equation}

\subsection*{Mixed finite element spaces}
Let us introduce the conforming velocity and pressure finite element spaces~$X_h\subseteq W^{1,\infty}(\Omega)^d \subset  W^{1,2}(\Omega)^d$ and~$Q_h  \subseteq L^{\infty}(\Omega) \subseteq L^2(\Omega)$, respectively, with respect to a triangulation~$\tria_h$.
For finite-dimensional subspaces~$X(T)$ of smooth vector-valued functions and~$Q(T)$ of smooth scalar-valued functions on~$T\in\tria_h$, we consider 
\begin{align}
	X_h\coloneqq X_h(\tria_h) &\coloneqq \{v\in C(\overline{\Omega})^d\colon v\big|_T \in X(T)~\text{for all } T\in \tria_h \}, 
	 \label{def:Xh} \\
	Q_h \coloneqq Q_h(\tria_h) & \coloneqq \{q\in \mathcal{Q}\colon q\big|_T \in Q(T)~\text{for all }  T\in \tria_h\},
	\label{def:Qh}
\end{align}
where either~$\mathcal{Q}=C(\overline{\Omega})$ or~$\mathcal{Q}=L^\infty(\Omega)$, depending on the concrete finite element pair. 
The subspace of pressure functions with zero integral mean is denoted by 
\begin{align}\label{def:Qhbar}
	\overline{Q}_h\coloneqq Q_h\cap L^{2}_0(\Omega).
\end{align}
The vector-valued Lagrange space arises from \eqref{def:Xh} by choosing $X(T)$ as the space $\mathcal{P}_k(T)^d$ of vector-valued polynomials of degree at most~$k\in \N$ on each simplex $T \in \tria_h$, and we denote it by
\begin{align*}
	\mathcal{L}_k^1(\tria_h)^d \coloneqq \{v\in C(\overline{\Omega})^d\colon v\big|_T \in \mathcal{P}_k(T)^d~\forall T\in \tria_h\}.
\end{align*}
For a subset~$\mathcal{S}_h \subset \tria_h$ and  $S \coloneqq \cup_{T \in \mathcal{S}_h} T$ as in~\eqref{def:neighbourhood-domain_S} we denote 
	\begin{align*}
	\mathcal{L}_{k,0}^1(\mathcal{S}_h)^d \coloneqq \{v\in C(S)^d\colon v\big|_T \in \mathcal{P}_k(T)^d~\forall T\in \mathcal{S}_h, \, v\big|_{\partial S} = 0 \}.
	\end{align*}
Analogously, we denote by~$\mathcal{P}_k(f)^d$ the space of vector-valued polynomials of degree at most~$k\in\N$ on a facet~$f\in\facets(\tria_h)$. 
The specific finite element spaces shall be introduced in Section~\ref{sec:localFortin} below.

With the spaces~$W^{1,p}_0(\Omega)^d$, $\Wpn$ and $\Wpsim$  as defined in~\eqref{def:W1p0}--\eqref{def:W1psim}, we define 
\begin{align}\label{def:Xhstar}
		X_{h,0}\coloneqq X_h\cap W^{1,\infty}_{0}(\Omega)^d, \quad 
			X_{h,n}\coloneqq X_h\cap W^{1,\infty}_{n}(\Omega), \quad \text{ and } \quad 
				X_{h,\sim}\coloneqq X_h\cap W^{1,\infty}_{\sim}(\Omega).
\end{align}
This means we have the following inclusions
\begin{alignat}{6} \notag
	&W^{1,\infty}_{0}(\Omega)^d& \quad\subset\quad & W^{1,\infty}_{n}(\Omega) &\quad\subset\quad&   W^{1,\infty}_{\sim}(\Omega) & \quad\subset\quad& W^{1,\infty}(\Omega)^d  \\
	&\quad \cup &\quad\;\;\;\quad& \quad \cup &\quad\;\;\quad& \quad \cup &\quad\;\;\quad&\quad  \cup \label{eq:subspace-relations}\\
	&\quad X_{h,0} &\quad\subset\quad& \quad X_{h,n} &\quad\subset\quad& \quad X_{h,\sim} &\quad\subset\quad&\quad X_h.  \notag
\end{alignat}

\subsection*{Bubble function spaces}

We consider the hat functions $(\varphi_z)_{z \in \vertices(\tria_h)} \subset \mathcal{L}^1_1(\tria_h)$, defined by 
\begin{align}\label{def:hatfunc}
	\varphi_z(z')  = \delta_{z,z'} \qquad \text{ for any } z,z' \in \vertices(\tria_h). 
\end{align}  
For edges, facets and~$d$-simplices~$\sigma \in \edges(\tria_h) \cup \facets(\tria_h) \cup  \tria_h$ of dimension $d_{\sigma} \in \{1,d-1,d\}$  we define the corresponding scalar bubble function 
\begin{align*}
	b_{\sigma} \coloneqq \prod_{z \in \vertices ( \{\sigma\} ) } \varphi_z \in \mathcal{L}^1_{d_{\sigma} + 1}(\tria_h), 
\end{align*}
with support $\support(b_T) = T$ for $T \in \tria_h$ and $\support(b_{\sigma}) = \overline{\Omega_h(\sigma)}$ for $\sigma \in \edges(\tria_h) \cup \facets(\tria_h)$.  

Let~$\tau_e$ denote one fixed unit tangent vector of an edge~$e \in \edges(\tria_h)$. 
For a facet~$f\in\facets(\tria_h)$ let $n_f$ be one fixed unit normal of the facet. 
Let~$\lambda_0, \ldots, \lambda_d$ be the barycentric coordinates of~$T \in \tria_h$. 
Then a basis of $\mathcal{P}_{k}(T)$ {for~$k\in\N_0$} is given by 
$\{\lambda^{\alpha} \colon  \alpha \in \mathbb{N}_0^{d+1}, \abs{\alpha} = k\}$.      
  
We also need the tangential edge bubble functions~$b_{e}^\tau$, for $e \in \edges(\tria_h)$, the higher-order facet bubble functions~$(b_f^{(i)})_{i \in \{0, \ldots, d-1\}}$, for $f \in \facets(\tria_h)$ and the higher-order element bubble functions $(b_{T}^{\alpha})_{\alpha \in \mathbb{N}_0^{d+1} \colon \abs{\alpha} = k} $  for $T \in \tria_h$ {and~$k\in\N_0$}, defined by 
\begin{alignat}{5}
		b_e^\tau &\coloneqq b_e \tau_e  &&\in \mathcal{L}^1_{2}(\tria_h)^{d}, \label{def:tan_edge_bubble}\\
	b_f^{(i)} & \coloneqq b_f \varphi_{z_i} &&\in \mathcal{L}^1_{d+1}(\tria_h) \qquad &&\text{ for } z_i \in \vertices(\{f\}), \; i \in \{0,\ldots, d-1\}, \label{def:face_bubble_higherdeg}\\
	b_{T}^{\alpha} & \coloneqq b_T \lambda^{\alpha} &&\in \mathcal{L}^{1}_{k + d + 1}(\tria_h) \qquad &&\text{ for } \alpha \in \mathbb{N}_0^{d+1} \colon \abs{\alpha} = k.
\end{alignat}
For $T\in\tria_h$ we define the following local bubble function spaces
\begin{align}
	B(T)&\coloneqq \linearspan \left\{b_T\big|_T\right\} \subseteq \mathcal{P}_{d+1}(T),\label{def:local_element_bubblespace}\\
	B_n(T) &\coloneqq \linearspan\left\{ b_f n_f\big|_T \colon  f\in\facets(\{T\}) \right\} \subseteq \mathcal{P}_d(T)^d, \label{def:local_normal_face_bubblespace}\\
	\BF(T) &\coloneqq \linearspan\left\{ b_f^{(i)} n_f\big|_T \colon  f
	\in\facets(\{T\}),~i\in\{0,...,d-1\}\right\} \label{def:local_face_bubblespace_d+1}  \subseteq \mathcal{P}_{d+1}(T)^d,\\
	B_{k,d}(T) &\coloneqq \linearspan\left\{b_T^{\alpha}\big|_T  \colon \alpha \in \mathbb{N}_0^{d+1}, \abs{\alpha} = k \right\} \subseteq \mathcal{P}_{k+d+1}(T).  \label{def:Bk-2d}
\end{align} 
The corresponding global spaces are denoted by 
\begin{align}
	B(\tria_h)&\coloneqq\linearspan\{b_T\colon T\in\tria_h\}\subseteq \mathcal{L}_{d+1}^1(\tria_h),\label{def:global_el_bubblespace}\\
	B_n(\tria_h)&\coloneqq \linearspan\{b_fn_f\colon f\in\facets(\tria_h)\} \subseteq \mathcal{L}_d^1(\tria_h)^d,\label{def:global_normal_face_bubblespace} \\
	\BF(\tria_h)&\coloneqq\linearspan\{b_f^{(i)} n_f\colon  f 
	\in\facets(\tria_h),~i\in\{0,...,d-1\}\} \subseteq \mathcal{L}_{d+1}^1(\tria_h)^d,\\
	B_{k,d}(\tria_h) & \coloneqq \linearspan\{b_T^{\alpha} 
	\colon T \in \tria_h, \alpha \in \mathbb{N}_0^{d+1}, \abs{\alpha} = k \}\subseteq \mathcal{L}_{k+d+1}^1(\tria_h),\label{def:global_Bk-2d}
\end{align}
where~$B_{0,d}(\tria_h) = B(\tria_h)$.  
Furthermore, we denote the spaces restricted to boundary facets by
\begin{align}
	B_n^\partial(\tria_h)&\coloneqq \linearspan\{b_fn_f\colon f\in\facets^\partial(\tria_h)\}\subset B_n(\tria_h), \label{def:bd_face_bubbles}\\ 
	\BFp(\tria_h)&\coloneqq \linearspan\{b_f^{(i)} n_f\colon  f
	\in\facets^\partial(\tria_h),~i\in\{0,...,d-1\}\}\subset \BF(\tria_h).\label{def:global_face_bubblespace_d+1}
\end{align}

\subsection*{Quasi-interpolation operator} 
The construction of Fortin operators is based on the modification of a local quasi-interpolation operator. 
Since trace-preservation properties are crucial in the following, we shall work with Scott--Zhang-type operators~\cite{ScottZhang1990}. 
For convenience we collect their properties. 

\begin{proposition}[{\cite{ScottZhang1990}}]\label{prop:SZinterp}
	Let~$(\tria_h)_h$ be a family of shape-regular triangulations, see \eqref{eq:shape-regularity}, and let~$k\in\N$. 
	Then there is a family of linear projection operators~$I_h\colon  W^{1,1}(\Omega)^d \to X_h=\mathcal{L}_k^1(\tria_h)^d$, i.e., $I_h\big|_{X_h} = \identity$, with the following properties: 
	\begin{enumerate}
		\item\label{itm:Ih-tracedep}
		(traces) Zero (normal) traces are preserved in the sense that 
		\begin{align}\label{eq:Ih-zerotr} 
		I_h(W^{1,1}_0(\Omega)^d) 
		&
		\subset 
			X_{h,0}, \\
			\label{eq:Ih-zeronormaltr}
		I_h(W^{1,1}_n(\Omega))
		 &
		 \subset X_{h, n}; 
		\end{align}
	\item\label{itm:Ih-W11stab} 
	(local $W^{1,1}$-stability) One has
	\begin{align*}
		\norm{I_h v}_{L^1(T)} + h_T \norm{\nabla (I_h v)}_{L^1(T)}
		 &\lesssim \norm{v}_{L^1(\Omega_h^1(T))} + h_T \norm{\nabla v}_{L^1(\Omega_h^1(T))}
	\end{align*}
	for all $v \in W^{1,1}(\Omega)^d$ and for all $T \in \tria_h$, uniformly in $h$; 
	\item\label{itm:Ih-local_approx} (local approximation property) For $p\in[1,\infty]$, $\ell\in\N_0$ and $\max\{\ell,1\}\leq m \leq k+1$ one has
	\begin{align*}
		\norm{\nabla^\ell (v - I_h v)}_{L^p(T)} \lesssim h_T^{m-\ell} \norm{\nabla^m v}_{L^p(\Omega_h^1(T))}
	\end{align*}
	for all $v \in W^{m,p}(\Omega)^d$ and for all $T\in\tria_h$, uniformly in $h$.
\end{enumerate} 
\end{proposition}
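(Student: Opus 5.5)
The plan is to use the standard Scott--Zhang construction~\cite{ScottZhang1990} and to choose the averaging facets carefully, so that the trace properties follow from the definition. Let $\{\phi_j\}_j$ be the nodal Lagrange basis of the scalar space $\mathcal{L}_k^1(\tria_h)$, with nodes $\{x_j\}_j$. For each node $x_j$ choose a simplex $\sigma_j$ containing $x_j$. If $x_j$ lies in the interior of $\Omega$, take $\sigma_j = T$ for some $T \ni x_j$ (or an interior facet). If $x_j \in \partial\Omega$, take $\sigma_j$ to be a boundary facet $f \in \facets^\partial(\tria_h)$ with $x_j \in f$. Let $\psi_j \in \mathcal{P}_k(\sigma_j)$ be the $L^2(\sigma_j)$-dual basis to the nodal basis functions restricted to $\sigma_j$. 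Define componentwise $I_h v \coloneqq \sum_j \big(\int_{\sigma_j} \psi_j v \,\mathrm{d}s\big) \phi_j$. Facet integrals make sense for $v \in W^{1,1}$ by the trace theorem.

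The projection property follows from duality: for $v \in X_h$, the restriction $v|_{\sigma_j}$ lies in $\mathcal{P}_k(\sigma_j)$, so $\int_{\sigma_j}\psi_j v = v(x_j)$.

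Local $W^{1,1}$-stability comes from scaling:
\begin{itemize}
\item $\|\psi_j\|_{L^\infty(\sigma_j)} \lesssim |\sigma_j|^{-1}$, by \eqref{eq:scaling} and a reference-element argument.
\item The scaled trace inequality $\|v\|_{L^1(f)} \lesssim h_T^{-1}\|v\|_{L^1(T)} + \|\nabla v\|_{L^1(T)}$ holds for $f \subset T$.
\item The inverse estimate $\|\nabla \phi_j\|_{L^1(T)} \lesssim h_T^{d-1}$ holds.
\end{itemize}
Combining these, each coefficient is bounded by $h_T^{-d}\big(\|v\|_{L^1(\Omega_h^1(T))} + h_T\|\nabla v\|_{L^1(\Omega_h^1(T))}\big)$, since $\sigma_j$ lies in the patch $\Omega_h^1(T)$. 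Shape regularity keeps the number of nodes per patch bounded, which gives~\ref{itm:Ih-W11stab}.

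For the approximation property~\ref{itm:Ih-local_approx}, use that $I_h$ reproduces polynomials of degree $k$ on the patch $\Omega_h^1(T)$. Write $v - I_hv = (v - q) - I_h(v - q)$ with $q$ the averaged Taylor polynomial of $v$ on $\Omega_h^1(T)$, which is connected and of uniformly bounded shape. The $W^{1,1}$ stability gives an $L^\infty(T)$ bound on the coefficients. A Bramble--Hilbert/Deny--Lions estimate on the patch then gives the $L^p$ bounds after H\"older and inverse estimates. This is routine but is where the bookkeeping lies: it must cover all $\ell \le m$ and all $p \in [1,\infty]$, including $m = 1$ with $\ell = 0$.

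The main point is the trace property~\ref{itm:Ih-tracedep}.

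\emph{Zero traces \eqref{eq:Ih-zerotr}.} If $\tr v = 0$, then every boundary node has $\sigma_j \subset \partial\Omega$, so its coefficient vanishes. Hence $I_h v$ vanishes at all boundary nodes. Being piecewise polynomial of degree $k$ on boundary facets, it vanishes on $\partial\Omega$.

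\emph{Zero normal traces \eqref{eq:Ih-zeronormaltr}.} This is the obstacle, because at a boundary node the normal is ambiguous (edges and vertices of $\partial\Omega$). Within a single flat boundary facet $f$, $\tr v\cdot n_f = 0$ gives $\int_f \psi_j (v\cdot n_f) = 0$. This is only useful if all nodes on $f$ use facets coplanar with $f$. I would resolve it by a node-dependent choice:
\begin{itemize}
\item If all boundary facets containing $x_j$ are coplanar (a flat neighbourhood), any choice works, since the normal $n$ is constant there.
\item At nodes where several boundary hyperplanes meet, collect the distinct normals $n^{(1)},\dots,n^{(r)}$ of the adjacent boundary facets. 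Define the coefficient vector $c_j$ by prescribing $c_j\cdot n^{(i)} = \int_{f_i}\psi_j^{f_i}(v\cdot n^{(i)})$ for facets $f_i$ in each plane.
\item Choose any remaining components from one fixed facet, projecting onto the complement of $\mathrm{span}\{n^{(i)}\}$.
\end{itemize}
For $v \in W^{1,1}_n$ all the prescribed normal components vanish, so $I_h v\cdot n = 0$ at every boundary node of each boundary facet, hence on $\partial\Omega$. The delicate part is that when $r > d$ or the normals are linearly dependent the prescription may be overdetermined. Here I would use that for $v\in W^{1,1}_n$ the normal components are consistent (all zero). For general $v$, I would define $c_j$ via a least-squares/pseudoinverse of the matrix of normals, which is uniformly bounded because the polyhedral $\Omega$ has finitely many face directions. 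This uniform bound preserves the stability and polynomial-reproduction arguments above: for $v$ polynomial all prescriptions are consistent, so the projection property still holds.
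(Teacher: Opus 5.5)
Your proposal is correct and follows the same route as the paper. The paper's proof just cites the Scott--Zhang construction for the projection, stability and zero-trace properties, the averaged-Taylor-polynomial argument of \cite[Sec.~4]{Diening2007} for local approximation, and \cite[Lemma~4.11]{GazcaOrozco2025} for zero normal traces. Your explicit pseudoinverse correction of the coefficients at boundary nodes where non-coplanar boundary facets meet is a valid self-contained substitute for that last citation, and you are right that some such modification is needed, since the plain single-facet Scott--Zhang operator does not preserve zero normal traces at edges and corners of $\partial\Omega$.
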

\begin{proof}
The Scott--Zhang quasi-interpolation operator introduced in \cite{ScottZhang1990} satisfies all the stated properties: 
the zero-trace preservation in~\ref{itm:Ih-tracedep}~\eqref{eq:Ih-zerotr} and stability in~\ref{itm:Ih-W11stab} are established in~\cite[Thm.~2.1, Thm.~3.1]{ScottZhang1990}. 
 The preservation of zero normal traces in~\ref{itm:Ih-tracedep}~\eqref{eq:Ih-zeronormaltr} is verified in~\cite[Lemma 4.11]{GazcaOrozco2025}. 
 By applying averaged Taylor polynomials and the projection property one can show that~\ref{itm:Ih-W11stab} implies approximation properties in $W^{1,p}$ as in \ref{itm:Ih-local_approx} for any $p\in[1,\infty]$, see \cite[Sec.~4]{Diening2007}.
\end{proof}

In fact, corresponding quasi-interpolation operators are also available for spaces $X_h$ satisfying $ \mathcal{L}^1_k(\tria_h)^d \subsetneq X_h$ for some $k \in \mathbb{N}$. 
The following lemma addresses the general case. 

\begin{lemma}\label{lem:SZinterp}
 Let~$X_h$ be as in~\eqref{def:Xh} with $\mathcal{P}_k(T)^d \subset X(T)$ for some $k\in\N$ and any $T \in \tria_h$, and hence  $\mathcal{L}^1_k(\tria_h)^d\subset   X_h$. 
 Then there exists a family of linear projection operators $I_h\colon W^{1,1}(\Omega)^d\to X_h$ satisfying properties \ref{itm:Ih-tracedep}--\ref{itm:Ih-local_approx} in Proposition~\ref{prop:SZinterp}. 
\end{lemma}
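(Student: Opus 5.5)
Following the remark before the lemma, I build the operator on $X_h$ directly as a Scott--Zhang-type operator for the larger local space $X(T)$, rather than by perturbing the operator of Proposition~\ref{prop:SZinterp}. Since $X_h$ is an $H^1$-conforming space of piecewise smooth functions, I assume it has a finite element structure with unisolvent local degrees of freedom on each $T$. These are associated with vertices, edges, facets and interiors, and the shared ones are single-valued, which is what makes $X_h \subset C(\overline\Omega)^d$. This covers all the concrete spaces considered later (Lagrange spaces enriched by the bubble spaces $B(\tria_h)$, $B_n(\tria_h)$, $\BF(\tria_h)$, $B_{k,d}(\tria_h)$). Let $(\psi_j)_j$ be the global nodal basis with dual functionals $N_j$. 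Following \cite{ScottZhang1990}, I attach to each $j$ a subsimplex $\sigma_j$ of dimension $\geq d-1$ on which $N_j$ lives, with $\sigma_j \subset \partial\Omega$ whenever the degree of freedom lies on $\partial\Omega$. On $\sigma_j$ I take an $L^2(\sigma_j)$-dual basis $\theta_j$ to the traces of the basis functions not vanishing on $\sigma_j$, and set $I_h v \coloneqq \sum_j \big(\int_{\sigma_j}\theta_j \cdot v\big)\psi_j$. Componentwise treatment is replaced by vector-valued $\theta_j$, since facet bubbles like $b_f n_f$ are vector-valued.

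The key steps are as follows. First, the projection property: for $v\in X_h$, the trace of $v$ on $\sigma_j$ is a combination of the traces of the basis functions supported there, so duality returns the coefficient of $\psi_j$. This requires those traces to be linearly independent on $\sigma_j$, which I impose through the choice of $\sigma_j$. Second, stability~\ref{itm:Ih-W11stab}: transform to the reference element using \eqref{eq:scaling}. This gives $\|\psi_j\|_{L^\infty}\lesssim 1$, $\|\nabla\psi_j\|_{L^\infty}\lesssim h_T^{-1}$ and $\|\theta_j\|_{L^\infty(\sigma_j)}\lesssim |\sigma_j|^{-1}$. The trace inequality $\|v\|_{L^1(\sigma_j)}\lesssim h^{-1}\|v\|_{L^1(T')}+\|\nabla v\|_{L^1(T')}$ on a simplex $T'\supset\sigma_j$ with $T'\subset\Omega_h^1(T)$ then gives the bound. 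Shape regularity keeps the number of local basis functions and the constants uniform. Third, approximation~\ref{itm:Ih-local_approx} follows exactly as in Proposition~\ref{prop:SZinterp}. Since $\mathcal{P}_k(T)^d\subset X(T)$, $I_h$ reproduces polynomials of degree $k$ on $\Omega_h^1(T)$, and combining this with stability and averaged Taylor polynomials \cite{Diening2007} yields the estimate.

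Fourth, the trace properties~\ref{itm:Ih-tracedep}. If $v$ has zero trace, every boundary degree of freedom uses $\sigma_j\subset\partial\Omega$ and so vanishes. Interior degrees of freedom belong to basis functions vanishing on $\partial\Omega$, so $I_hv\in X_{h,0}$. Zero normal traces are the main obstacle, and here I would mimic \cite[Lemma 4.11]{GazcaOrozco2025}. Within each boundary facet (and at vertices or edges on $\partial\Omega$) I choose the local basis of vector-valued functions so that it splits into normal and tangential parts with respect to a chosen boundary facet $f$. An example is the facet bubbles $b_f^{(i)}n_f$, which are purely normal. I then choose $\theta_j$ so that the normal coefficients are computed from $\tr(v)\cdot n_f$ alone. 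The delicate cases are vertices and edges where several boundary facets with different normals meet. There I would pick $\sigma_j$ as a boundary facet and argue as in the Lagrange case from \cite{GazcaOrozco2025}. For the additional bubble degrees of freedom I would verify directly that the normal components vanish, using that $b_f^{(i)}n_f$ and $b_fn_f$ have normal traces supported on $f$ only. Tangential bubbles $b_e^\tau$ with $e\subset\partial\Omega$ would need care, but they are genuinely tangential along the boundary.
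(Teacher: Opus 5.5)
Your proposal is correct and follows the same route as the paper. The paper's proof simply invokes the remark in \cite[p.~485]{ScottZhang1990} that the Scott--Zhang construction extends to local spaces $X(T)\supset\mathcal{P}_k(T)^d$, asserts that the resulting operator is a projection preserving zero (normal) traces, and obtains stability and approximation as in the standard case. Your sketch fills in more detail (dual functions on $\sigma_j$, scaling, local polynomial reproduction) but, like the paper, defers the zero-normal-trace property at boundary vertices and edges where several facet normals meet to the argument of \cite{GazcaOrozco2025}; there the normal components have to be read off from different boundary facets rather than from a single facet per node.
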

\begin{proof}
	As mentioned in \cite[p.~485]{ScottZhang1990} the construction of the Scott--Zhang quasi-interpolation operator extends to finite element spaces where~$X(T) \supset \mathcal{P}_k(T)^d$.  
	It can be constructed to be a projection operator, i.e., $I_h\big|_{X_h}=\identity$ still holds and it preserves zero (normal) traces as in Proposition~\ref{prop:SZinterp}~\ref{itm:Ih-tracedep}. 
	The remaining properties \ref{itm:Ih-W11stab}--\ref{itm:Ih-local_approx} can be obtained as in the standard case. 
\end{proof}

\begin{remark}[trace-preservation properties]\label{rmk:SZ-discr-traces}
	\hfill 
\begin{enumerate}
	\item  
	\label{itm:SZ-discr-tr}	
 The operator~$I_h$ in Lemma~\ref{lem:SZinterp} preserves discrete traces in the sense that
	\begin{alignat*}{4}
		\tr(I_h(v))&=\tr(v) &&\quad\text{ if } \tr(v)=\tr(v_h) &&&\text{ for some } v_h\in {X_h}, \\
		\tr(I_h (v))\cdot n &= \tr(v)\cdot n &&\quad\text{ if } \tr(v)\cdot n = \tr(v_h)\cdot n \quad&&&\text{ for some } v_h\in {X_h}.
	\end{alignat*}
This is a direct consequence of the projection property on~$X_h$ and the zero-trace-preservation property, see  Proposition~\ref{prop:SZinterp}~\ref{itm:Ih-tracedep}. 
	\item 
	\label{itm:SZ-part-bd} 
	Assume that~$\Gamma\subsetneq\partial\Omega$ is a (sufficiently regular) subset of the boundary which is resolved by the family of triangulations $(\tria_h)_h$, i.e., $\Gamma$ is the union of a subset of boundary facets in $\tria_h$. 
	Then, the operator~$I_h$ as in Lemma~\ref{lem:SZinterp} can be designed such that~$\tr(I_h(v))\big|_{\Gamma}$ only depends on~$\tr(v)\big|_{\Gamma}$, and that zero (normal) traces on~$\Gamma$ are preserved by~$I_h$. 
	This can be achieved by choosing the facets in the construction suitably. More precisely, for any Lagrange node $z \in\Gamma$ one chooses a facet~$f \in \facets(\tria_h)$ with $z \in f \subset \Gamma$. 
	Analogously to~\ref{itm:SZ-discr-tr}, discrete traces on~$\Gamma$ are also preserved. 
\end{enumerate}
\end{remark}

\subsection*{Inf-sup stability and Fortin lemma}
Throughout, we shall work in the following setting. 

\begin{assumption}[FEM setting]
	\label{ass:FEM}
	Let~$(X_h,Q_h)_{h}$ be a family of mixed finite element spaces as defined in~\eqref{def:Xh}, \eqref{def:Qh} on a sequence of shape-regular triangulations~$(\tria_h)_{h}$. 	
	Assume that there exists a~$k\in\N$ such that~$\mathcal{L}_k^1(\tria_h)^d\subseteq X_h$. 
\end{assumption}

We say that the family of pairs $X_{h,0} \times \overline{Q}_h \subset W^{1,\infty}_0(\Omega)^d \times L^{\infty}_0(\Omega)$ as defined in~\eqref{def:Qhbar}, \eqref{def:Xhstar} satisfies the \emph{discrete inf-sup condition} for $p \in (1,\infty)$ if there exists a constant~$ \infsupdiscr =\infsupdiscr(p)>0$ such that
\begin{equation}\label{def:discr-inf-sup-cond}
	\infsupdiscr \norm{q_h}_{L^{p'}(\Omega)} \leq \sup_{v_h\in X_{h,0}\backslash\{0\}} \frac{\skp{\divergence v_h}{q_h}}{\norm{\nabla v_h}_{L^p(\Omega)}} \qquad \text{for all }q_h\in \overline{Q}_h,
\end{equation}
uniformly in $h$, where $p' \in (1,\infty)$ is such that $\tfrac1p + \tfrac{1}{p'}=1$.

Analogously to~\eqref{def:inf-sup-cont}, by the subset relations~\eqref{eq:subspace-relations} this implies the discrete inf-sup condition for $X_{h,n} \times \overline{Q}_h$  and for $X_{h,\sim} \times \overline{Q}_h$ as
\begin{align}\label{est:inf-sup-star}
	\infsupdiscr \norm{q_h}_{L^{p'}(\Omega)}
 \lesssim  
	\sup_{v_h \in X_{h,n} \setminus \{0\}} \frac{\skp{\divergence v_h}{q_h}}{\norm{ v_h}_{W^{1,p}(\Omega)}} \leq 
	\sup_{v_h \in X_{h,\sim} \setminus \{0\}} \frac{\skp{\divergence v_h}{q_h}}{\norm{ v_h}_{W^{1,p}(\Omega)}}.    
\end{align}
By the Fortin lemma~\cite{Fortin1977}, provided that the continuous inf-sup condition holds, the existence of a Fortin operator for~$p=2$ satisfying the divergence-preservation property and global $W^{1,2}$-stability is equivalent to discrete inf-sup stability for~$p=2$; see also~\cite[Lem.~26.9]{Ern2021b}.
The following lemma due to~\cite{Ern2016} represents a generalisation to Banach spaces. 

\begin{lemma}[{\cite[Thm.~1]{Ern2016}}]\label{lem:Fortin_operator}
	Let $(Y,\norm{\cdot}_{Y})$ and $(W, \norm{\cdot}_W)$ be real Banach spaces and let $b \colon Y \times W \to \mathbb{R}$ be a bounded bilinear form which satisfies the inf-sup condition 
	\begin{align*}
		\alpha \leq \inf_{w\in W \setminus \{0\}} \sup_{y \in Y \setminus \{0\}} \frac{b(y,w)}{\norm{w}_W \norm{y}_Y}.  
	\end{align*}
	Let~$Y_N \subset Y$ and $W_N \subset W$, for $N \in \mathbb{N}$,
	be families of finite-dimensional subspaces. 
Then, there is a constant~${ \alpha_*} >0$
	such that the discrete inf-sup condition 
	\begin{align*}
{ \alpha_*} \leq \inf_{w_N\in W_N \setminus \{0\}} \sup_{y_N \in Y_N \setminus \{0\}} \frac{b(y_N,w_N)}{\norm{w_N}_W \norm{y_N}_Y}
\end{align*}
	is satisfied for all~$N\in \mathbb{N}$ if and only if there is an operator $\Pi_N \colon Y \to Y_N$ with the properties: 
	\begin{enumerate}[label = (\roman*)]
		\item \label{itm:F-h-div-pres} $b(\Pi_N(y) - y,w_N)= 0$ \qquad for all  $w_N \in W_N$, for all $y \in Y$;
		\item \label{itm:F-h-stab} there is a constant $c>0$ such that $$\quad\norm{ \Pi_N y  }_{Y} \leq c \norm{y }_{Y}\qquad \text{ for all } y \in Y \; \text{ and any } N \in \mathbb{N}.$$
	\end{enumerate}
	If $Y$ is a Hilbert space, then $\Pi_N$ can be chosen to be linear. 
\end{lemma}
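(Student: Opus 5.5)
The proof splits into the two implications; in Banach spaces the discrete inf-sup condition does not immediately give a bounded linear right inverse, and this is where the real work lies.

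\emph{Sufficiency.} Assume $\Pi_N$ satisfies \ref{itm:F-h-div-pres} and \ref{itm:F-h-stab}. Fix $w_N\in W_N\setminus\{0\}$. By the continuous inf-sup condition there is $y\in Y$ with $b(y,w_N)\ge \tfrac{\alpha}{2}\norm{w_N}_W\norm{y}_Y$. Put $y_N\coloneqq \Pi_N y$. Property \ref{itm:F-h-div-pres} gives $b(y_N,w_N)=b(y,w_N)$, and property \ref{itm:F-h-stab} gives $\norm{y_N}_Y\le c\norm{y}_Y$. Hence
\[
b(y_N,w_N)\ge \tfrac{\alpha}{2c}\,\norm{w_N}_W\norm{y_N}_Y .
\]
If $y_N=0$, then $b(y,w_N)=0$, which contradicts the choice of $y$. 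This gives the discrete inf-sup condition with $\alpha_*=\alpha/(2c)$. Linearity of $\Pi_N$ is not needed in this direction.

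\emph{Necessity.} Assume the discrete inf-sup condition with constant $\alpha_*$. Define $B_N\colon Y_N\to W_N'$ by $\langle B_N y_N,w_N\rangle=b(y_N,w_N)$. Since $W_N$ is finite-dimensional, the discrete inf-sup condition says that $B_N^*$ is injective with $\norm{B_N^* w_N}\ge\alpha_*\norm{w_N}_W$. Hence $B_N$ is surjective onto $W_N'$, and by the closed-range/open-mapping argument every $g\in W_N'$ has a preimage $y_N$ with $\norm{y_N}_Y\le \alpha_*^{-1}\norm{g}_{W_N'}$. In finite dimensions this follows directly: the quotient map $Y_N/\ker B_N\to W_N'$ is an isomorphism with inverse norm at most $\alpha_*^{-1}$, because its adjoint is bounded below by $\alpha_*$. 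For $y\in Y$ set $g_y\coloneqq b(y,\cdot)|_{W_N}$; then $\norm{g_y}\le\norm{b}\norm{y}_Y$. Let $\Pi_N y$ be a minimal-norm element of the affine set $\{y_N\in Y_N\colon B_N y_N=g_y\}$; it exists because the set is a nonempty closed finite-dimensional affine subspace. By construction $\Pi_N$ satisfies \ref{itm:F-h-div-pres}, and $\norm{\Pi_N y}_Y\le \alpha_*^{-1}\norm{b}\norm{y}_Y$. In general $\Pi_N$ is nonlinear (minimizers need not be unique or linear), which matches the statement. One can make the choice unique, for instance via the quotient norm, but uniqueness is not required.

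\emph{Hilbert case.} When $Y$ is a Hilbert space, take the minimizer with respect to the Hilbert norm. It is the $Y$-orthogonal projection of any particular solution onto $(\ker B_N)^\perp\cap Y_N$. Concretely, $\Pi_N=(B_N|_{(\ker B_N)^{\perp}})^{-1}\circ G_N$ with $G_N y=g_y$ linear, so $\Pi_N$ is linear with the same bound.

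The main obstacle is the necessity direction: passing from the inf-sup bound on $B_N^*$ to a uniformly bounded right inverse of $B_N$ with constant $\alpha_*^{-1}$. I would argue this through the quotient space $Y_N/\ker B_N$ and duality, using $\norm{B_N^*w_N}=\sup_{y_N} b(y_N,w_N)/\norm{y_N}$ and the fact that a finite-dimensional operator and its adjoint have the same norm of inverse. This yields a bound independent of $N$.
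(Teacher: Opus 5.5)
Your proof is correct. The paper gives no proof of its own and simply imports the lemma from \cite[Thm.~1]{Ern2016}, and your argument is the standard one behind that result: Fortin's argument for one direction; for the converse, a minimal-norm right inverse of $B_N$ with norm at most $\alpha_*^{-1}$ (obtained from the adjoint bound by duality) applied to $b(y,\cdot)|_{W_N}$, giving $c=\|b\|/\alpha_*$ uniformly in $N$, and in the Hilbert case the linear pseudo-inverse onto $(\ker B_N)^\perp\cap Y_N$.
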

Lemma~\ref{lem:Fortin_operator} has the following consequences: 
 If the discrete inf-sup condition~\eqref{def:discr-inf-sup-cond} holds, then,  by \eqref{def:inf-sup-cont} and  \eqref{est:inf-sup-star} for each~$p \in (1,\infty)$ and for each~$\star  \in \{0,n,\sim\}$ there exists an operator~$\Pi_{h,\star}^{p}\colon W^{1,p}_{\star}(\Omega)^d \to X_{h,\star}$ such that 
\begin{enumerate}[label = (\roman*)]
	\item $\skp{\divergence \Pi^{p}_{h,\star}(v)}{q_h} = \skp{\divergence v}{q_h}$ \qquad for all  $q_h \in \overline{Q}_h$, for all $v\in W^{1,p}_{\star}(\Omega)^d$;
	\item there is a constant $c_{\star,p}>0$ such that 
	$$\norm{ \Pi_{h,\star}^{p} (v) }_{W^{1,p}(\Omega)} \leq c_{\star,p} \norm{ v }_{W^{1,p}(\Omega)} \qquad   \text{ for all } v \in W^{1,p}_{\star}(\Omega)^d.$$
\end{enumerate} 
However, the abstract Fortin lemma does not provide a single operator $\Pi_h \colon W^{1,1}(\Omega)^d\to X_h$ which is stable in~$W^{1,p}(\Omega)^d$ for all~$p \in (1,\infty)$ and preserves homogeneous traces in the sense that~$\Pi_h(W^{1,1}_\star(\Omega)^d)\subseteq X_{h,\star} \subset W^{1,\infty}_\star(\Omega)^d$
for each~$\star\in\{0,n,\sim\}$. 
Furthermore, in general the operators are neither projections nor local, and the abstract Fortin lemma guarantees linearity only for $p = 2$.  

For these reasons, for a range of finite element spaces we construct a single operator satisfying all these properties.


\section{Properties of Fortin operators}
\label{sec:main_results}

Let us collect the desired properties of Fortin operators, which we verify in the construction. 
Let Assumption~\ref{ass:FEM} be satisfied with~$k \in \mathbb{N}$.  
For specific finite element spaces we construct an operator~$\Pi_h\colon W^{1,1}(\Omega)^d\to X_h$ with the following properties:

\begin{enumerate}[label = (P\arabic*)]\setcounter{enumi}{-1}
	\item \label{prop:lin-proj} 
	(linear projection) 
	The operator~$\Pi_h$ is linear and a projection onto~$X_h$ in the sense that~$\Pi_h\big|_{X_h}=\identity$;
	\item \label{prop:div-pres} 
	(divergence preservation)
	For all $v\in W^{1,1}(\Omega)^d$ we have that
	\begin{align}\label{eq:prop-div_pres}
		\skp{\divergence (\Pi_h v)}{q_h} 
		= \skp{\divergence v}{q_h} \qquad \text{for all }q_h\in \overline{Q}_h;
	\end{align}
	\item \label{prop:trace-pres} 
	(trace preservation) 
	$\Pi_h$ preserves zero traces in the following sense   
	\begin{enumerate}[label=(\roman*)]
		\item \label{itm:global_zerotrace} $\Pi_h(W^{1,1}_0(\Omega)^d)\subseteq X_{h,0}$,
		\item \label{itm:global_zeronormaltrace} $\Pi_h(W^{1,1}_n(\Omega))\,\;
		\subseteq X_{h,n}$,
		\item \label{itm:global_meantrace} 
		$\Pi_h(W^{1,1}_\sim(\Omega))\,\;
		\subseteq X_{h,\sim}$;
	\end{enumerate}

	\item \label{prop:global-stab-approx} (global stability and approximation) For any~$p\in[1,\infty]$ we have
	\begin{enumerate}[label=(\roman*)]
		\item \label{itm:prop_global_H1semi-stab}$\displaystyle \norm{\nabla (\Pi_h v)}_{L^p(\Omega)} \lesssim \norm{\nabla v}_{L^p(\Omega)}$ 
		\quad for any $v\in W^{1,p}(\Omega)^d$;
		\item 
		\label{itm:prop_global_Sob-stab}
		$\displaystyle \norm{\Pi_h v}_{W^{1,p}(\Omega)}\;\;\, \lesssim \norm{v}_{W^{1,p}(\Omega)}$ 
		\quad  for any $v\in W^{1,p}(\Omega)^d$;
		\item \label{itm:prop_global_approx} 
		 for $\ell \in \{0,1\}$ and $m \in \mathbb{N}$ with $ \ell\leq m\leq k+1$ we have that 
		\begin{align*}
			\norm{\nabla^\ell (v-\Pi_h v)}_{L^p(\Omega)} \lesssim h^{m-\ell} \norm{ \nabla^m v}_{L^p(\Omega)} \quad \text{for any } v\in W^{m,p}(\Omega)^d,
		\end{align*}
	\end{enumerate}
	independently of~$h$; 
	\item \label{prop:local-stab-approx}(local stability and approximation) There is an~$s\in\N$ such that for~$p\in[1,\infty]$ we have 
	\begin{align}\label{eq:prop_local-stab}
		\norm{\Pi_h v}_{L^p(T)} + h_T \norm{\nabla (\Pi_hv)}_{L^p(T)}\lesssim \norm{v}_{L^p(\Omega^s_h(T))} +  h_T \norm{\nabla v}_{L^p(\Omega_h^s(T))}
	\end{align}
	for all~$v\in W^{1,p}(\Omega)^d$, for all~$T\in \tria_h$, uniformly in $h$. 
	 Further, for any~$\ell\in\N_0$ and $m \in \mathbb{N}$ with $\ell\leq m\leq k+1$ we have
	\begin{align}\label{eq:prop_local-approx}
		\norm{\nabla^\ell (v-\Pi_h v)}_{L^p(T)} \lesssim h_T^{m-\ell} \norm{\nabla^m v}_{L^p(\Omega_h^s(T))} \qquad \text{for any } v\in W^{m,p}(\Omega)^d,
	\end{align}
	for all~$T\in \tria_h$, uniformly in $h$. 
The hidden constants in~\eqref{eq:prop_local-stab}, \eqref{eq:prop_local-approx} depend only on the shape-regularity constant of $(\tria_h)_{h}$, $d$, $s$, $p$,  $\dim X(T)$ and~$\dim Q(T)$. 
\end{enumerate}

\begin{table}[h] 
	\small
	\begin{TAB}(r)[3pt]{|l|l|l|}{|c|cc|ccc|ccc|c|cccc|cccc|cc|c|}
		finite	element	& properties & results and conditions \\
		$P_d-P_0$
		& \ref{prop:lin-proj}--\ref{prop:local-stab-approx} with $s = 1$
		& 
		\\
	& \quad for $k = d$
	& Rmk.~\ref{rmk:PdP0}
	 \\
		Bernardi--
		& \ref{prop:lin-proj}--\ref{prop:local-stab-approx} with $s = 1$ 
		& \\
	Raugel	& \quad for $k=1$, for $d \in \{2,3\}$  &  Lem.~\ref{lem:BR_Fortin-v} \\
		& \quad for $k = 2$ for $d = 3$   &  Rmk.~\ref{rmk:BR-2ndorder}\\
		conforming   
		& \ref{prop:lin-proj}--\ref{prop:local-stab-approx} with $s=1$
		& 
	\\
	Crouzeix--	& \quad  for $k \geq 2$
		& 
		Prop.~\ref{prop:Fortin_CR}: $k\ge d$ 	 \\
	Raviart	& & Rmk.~\ref{rmk:confCRd3k2}: $d=3$, $k=2$, enrichment with $B_n(\tria_h)$ \\
		Guzmán--Neilan
		& \ref{prop:lin-proj}--\ref{prop:local-stab-approx}
		& Section~\ref{subsec:Guzman-Neilan}, \cite{Guzman2014a,Guzman2014b}
		\\
		modified 
		& \ref{prop:lin-proj}--\ref{prop:local-stab-approx} with $s=1$
		& 
	 \\
	MINI	& \quad for $k=1$
		&
			Lem.~\ref{lem:Fortin_MINI}: enrichment with $B_n(\tria_h)$
	 \\
	 && Rmk.~\ref{rmk:MINI-alternative}:  enrichment with $\BFp(\tria_h)$\\
	 		& \quad for $k \geq 1$ & 
	 		Rmk.~\ref{rmk:MINI-highorder}: enrichment with $B_n(\tria_h)$ if $k < d$  \\ 
		Taylor--
		& \ref{prop:lin-proj}--\ref{prop:local-stab-approx}  with $s=2$
		& 
		 \\
	Hood	& \quad  for $k \geq 2 $
		& 
		Prop.~\ref{prop:Fortin_TH}: $k\ge d$
	\\
	& & 
		Rmk.~\ref{rmk:THd3k2}: $d=3$, $k=2$, enrichment   \\
		&& \qquad \qquad \quad  with $B_n(\tria_h)$ or $\BFp(\tria_h)$
		\\
		all inf-sup stable pairs
		& \ref{prop:div-pres}--
			\ref{prop:global-stab-approx} for fixed $p$ 
		& 
		Prop.~\ref{prop:Fortin-global}: if $B_n(\tria_h)\subseteq X_h$
		\\
		&  \qquad for $\ell = 1$ & \\
		all inf-sup stable pairs
		& \ref{prop:lin-proj}--\ref{prop:global-stab-approx} for $p = 2$ and $\ell = 1$ 
		&  Rmk.~\ref{ex:globalFortin}~\ref{itm:example-p2}: for $p = 2$ 
	\end{TAB}
	\caption{Overview of properties of Fortin operators for various mixed finite element spaces satisfying\\
		 Assumption~\ref{ass:FEM} with $k \in \mathbb{N}$ in dimension $d \in \{2,3\}$.}
	\label{tbl:fortin-summary}
\end{table}

\begin{remark}\label{rmk:properties_Fortin}
	\begin{enumerate}
		\item 
		\label{rmk:stab_approx} 
	The local properties in~\ref{prop:local-stab-approx} imply the global ones in~\ref{prop:global-stab-approx}. 
	The proof proceeds by summing the $p$th  powers of the estimates over~$T\in \tria_h$ for~$p \in [1,\infty)$, and by taking the maximum   over~$T\in \tria_h$ for~$p = \infty$ using the shape-regularity of~$(\tria_h)_h$. 	
	If the operators $\Pi_h$ are local as in~\ref{prop:local-stab-approx}, then the constants in~\ref{prop:global-stab-approx}  depend only on the  shape-regularity constant of $(\tria_h)_{h}$, $d$, $s$, $p$,  $\dim X(T)$ and $\dim Q(T)$. Note that $s\in\N$  depends only on the specific finite element pair.
	
	Also, local stability~\eqref{eq:prop_local-stab} follows from the local approximation property~\eqref{eq:prop_local-approx}. 
	Hence, to verify \ref{prop:global-stab-approx} and \ref{prop:local-stab-approx} it suffices to show \eqref{eq:prop_local-approx}. 
	\item 
		\label{itm:rmk_discr-trace} 
	Analogously to Remark~\ref{rmk:SZ-discr-traces}~\ref{itm:SZ-discr-tr}, the linear operator~$\Pi_h$ preserves discrete (normal) traces	of functions in~$X_h$ as a consequence of \ref{prop:trace-pres}~\ref{itm:global_zerotrace} and \ref{itm:global_zeronormaltrace} and the projection property \ref{prop:lin-proj}. 
		\item 
		\label{itm:rmk_part-bd}
		 Assume that~$\Gamma\subsetneq\partial\Omega$ is a sufficiently regular subset of the boundary of positive $(d-1)$-dimensional measure, which is resolved by $(\tria_h)_h$, meaning that it is the union of a subset of the boundary facets of $\tria_h$. 
		 For all finite element pairs we consider in this work the operator~$\Pi_h$ can be constructed to preserve zero (normal) traces  on~$\Gamma$ without imposing zero traces on $\partial \Omega \setminus \Gamma$.  
		We refer to this stronger property as~\ref{prop:trace-pres}$_{\Gamma}$. 
		For mixed boundary conditions, see also \cite[Ch.~6]{Bernardi2024}. 
		\item 
		The local Fortin operators constructed in Section~\ref{sec:localFortin} satisfy property~\ref{prop:div-pres} also for pressure functions~$q_h\in Q_h = \overline{Q}_h \oplus \linearspan\{1\}$. 
		This directly implies~\ref{prop:trace-pres}~\ref{itm:global_meantrace}. 
		\item 
		The Fortin operators for the~$P_d-P_0$,  lowest-order Bernardi--Raugel ~\cite{BernardiRaugel1985},  conforming Crouzeix--Raviart \cite{CrouzeixRaviart1973} and Guzmán--Neilan element~\cite{Guzman2014a, Guzman2014b} satisfy all properties~\ref{prop:lin-proj}--\ref{prop:local-stab-approx} by construction. 
		For the sake of completeness we review them in Section~\ref{subsec:discont} below. 
	\end{enumerate}
\end{remark}


\section{Applications}\label{sec:applications}
Let us collect some consequences of the availability of a Fortin operator satisfying properties~\ref{prop:lin-proj}--\ref{prop:local-stab-approx}. 
In particular, preservation of zero traces, zero  normal traces, and zero mean of the normal traces, as well as locality, are important in the following applications.

\subsection{Quasi-optimal error estimates for inhomogeneous Dirichlet boundary conditions}
For general mixed finite element spaces  with a Fortin operator $\Pi_h$ satisfying properties~\ref{prop:lin-proj}--\ref{prop:global-stab-approx} quasi-optimal error estimates for the Stokes equations with inhomogeneous Dirichlet boundary conditions have been established in~\cite[Thm.~2.13]{Eickmann2025a}. 
More precisely, let $(u,\pi) \in W^{1,2}(\Omega)^d \times L^2_0(\Omega)$ be the solution to the Stokes problem with $\tr(u) = \tr(g)$ on $\partial \Omega$ for some $g\in W^{1,2}_{\sim}(\Omega)$, and let $(u_h,\pi_h) \in X_h \times Q_h$ be the solution to the corresponding discrete Stokes problem with $\tr(u_h) = \tr(\Pi_h g)$ on $\partial \Omega$. 
Then, one has 
\begin{equation}
	\begin{aligned}
	\norm{\nabla u - \nabla u_h}_{L^2(\Omega)}
	&+ 
	\nu^{-1} \norm{  \pi- \pi_h}_{L^2(\Omega)} \\
	 &\lesssim \inf_{v_h \in X_h} \norm{\nabla u - \nabla v_h}_{L^2(\Omega)} + \nu^{-1} \inf_{q_h \in Q_h}  	\norm{\pi - q_h}_{L^2(\Omega)},
\end{aligned}
\end{equation}
uniformly in $h$. 
This holds under the assumption that~$\Pi_h$ preserves traces of functions in~$X_h$, see~\cite[Assump.~2.10 (iiia)]{Eickmann2025a}.
The \textit{discrete trace-preservation property} of~$\Pi_h$  allows the treatment of the trace condition as an additional constraint, and hence yields quasi-optimality without extra data approximation terms.

The Scott--Vogelius element for~$d = 2$ is defined by~$X_h = \mathcal{L}^1_k(\tria_h)^2$ and~$Q_h = \divergence X_{h,\sim}$, and is inf-sup stable for~$k \geq 4$. 
On triangulations without boundary singular vertices, a quasi-optimal error estimate for the velocity holds, in the sense that 
 \begin{align}
	\norm{\nabla u - \nabla u_h}_{L^2(\Omega)}
	\lesssim \inf_{v_h \in X_h} \norm{\nabla u - \nabla v_h}_{L^2(\Omega)},
 \end{align}
 see~\cite[Thm.~3.4]{Eickmann2025a}. 
 Such an estimate is referred to as a \emph{pressure-robust} error estimate. 
The trace-preservation property~\ref{prop:trace-pres}~\ref{itm:global_meantrace} is key for exact divergence constraints.

\subsection{Non-Newtonian fluid flow with Dirichlet boundary conditions}
\textit{Locality} of the Fortin operator as in~\ref{prop:local-stab-approx} is instrumental for the analysis of non-Newtonian fluid flows. 
In the seminal work~\cite{BelenkiBerselliDieningEtAl2012} a priori error estimates for general finite element discretisations of the~$p$-Stokes system and generalisations subject to homogeneous Dirichlet conditions were established. 
Locality of the Fortin operator (see Assump.~2.9~(c) in~\cite{BelenkiBerselliDieningEtAl2012}) is important to handle the nonlinear term in a so-called quasi-norm. 
Those results were subsequently extended to more general equations, but under the same assumptions on the Fortin operator, see, e.g.,~\cite{BerselliKaltenbach2025}. Furthermore, locality was exploited in the construction of a discrete divergence-free Lipschitz truncation in~\cite{DieningKreuzerSueli2013}, which is used therein to show plain convergence results for generalised Navier--Stokes equations. 

In~\cite{Jessberger2024} a priori error estimates for non-Newtonian fluids with homogeneous Dirichlet boundary conditions were extended to inhomogeneous Dirichlet boundary conditions. 
This relies on a lifting operator based on adapted assumptions on the Fortin operator, see~\cite[Assump.~2.10]{Jessberger2024}.
It requires the inhomogeneous version of the {divergence preservation}\footnote{A minor typo in~\cite[Assump.~2.9]{BelenkiBerselliDieningEtAl2012} was carried over to~\cite{Jessberger2024},  leading to the claim that the assumptions hold for several elements. 	
To the best of our knowledge, their validity is established only in the present work.} in~\ref{prop:div-pres} to hold rather than the homogeneous version for~$v \in W^{1,1}_0(\Omega)^d$. 
One can show that this property is satisfied for the Bernardi--Raugel element, the $P_d-P_0$ element, the conforming Crouzeix--Raviart element and the Guzmán--Neilan element with their respective standard Fortin operators, see Section~\ref{subsec:discont} below. 
However, for other finite elements this is nontrivial. 
In Section~\ref{sec:localFortin} we verify these properties  for the (enriched) MINI and Taylor--Hood elements.

\subsection{Domain approximation and moving domains}
\emph{Locality} of the Fortin operator, as in ~\ref{prop:local-stab-approx}, can be used  to obtain discrete inf-sup stability on sequences of polygonal domains, which is uniform with respect to the domain sequence. 
This is useful when approximating curved domains and in the context of moving domains approximated with  unfitted meshes. 
In dimension~$d = 3$ the polygonal domains occurring in such applications need not be Lipschitz. 
Under reasonable assumptions, however, they may still be so-called John domains~\cite{John1961}, see also \cite{AcostaDuranMuschietti2006,DieningRuzickaSchumacher2010}. 

By the local stability property~\ref{prop:local-stab-approx}, the Fortin operators are uniformly stable, with constants depending only on the shape regularity of the triangulation. 
Consequently, uniform discrete inf-sup stability with respect to both the domain sequence and the discretisation follows by the standard arguments, cf.~\cite[Prop.~5.4.2]{Boffi2013}.  
A different approach based on alternative assumptions on the discrete domains was used  in~\cite{KirchhartGrossReusken2016,GuzmanOlshanskii2018} to establish uniform discrete inf-sup stability. 

We consider a sequence of John domains~$(\Omega_N)_{N \in \mathbb{N}}$, as introduced by~\cite{John1961}, see also~\cite[Def.~2.1]{AcostaDuranMuschietti2006} or~\cite[Def.~3.1, Def.~3.5]{DieningRuzickaSchumacher2010}, with polyhedral boundary and without slits. 
For such domains Sobolev spaces and traces can be defined as above. 
Let~$(\tria_{N})_{N \in \mathbb{N}}$ be a sequence of triangulations where~$\tria_N$ triangulates~$\Omega_N$ for each $N \in \mathbb{N}$  with maximal mesh size~$h_N>0$.  
We consider finite-dimensional mixed pairs~$X_{N,0} \times \overline Q_N \subset W^{1,\infty}_0(\Omega_N)^d \times L^{\infty}_0(\Omega_N)$ on the respective triangulation~$\tria_N$, and aim for a uniform discrete inf-sup constant. 

More precisely, for every~$p \in (1,\infty)$ we seek a constant~$\infsupdiscr = \infsupdiscr(p)>0$, independent of $N$ and~$h_N$,  such that 
\begin{align}\label{est:discr-inf-sup-Omega-n} 
	\infsupdiscr 
	 \leq 
	\inf_{q_N \in \overline Q_N \setminus \{0\}} \sup_{v_N \in X_{N,0} \setminus \{0\}}	 \frac{\skp{\divergence v_N}{q_N}_{\Omega_N}}{\norm{q_N}_{L^{p'}(\Omega_N)} \norm{\nabla v_N}_{L^p(\Omega_N)}} \qquad \text{ for any } N \in \mathbb{N}. 
\end{align} 
The argument relies on two ingredients. First, we need the uniform inf-sup stability of the pair~$W^{1,p}_0(\Omega_N)^d \times {L^{p'}_0(\Omega_N)}$, meaning that 
\begin{align}
	\label{est:inf-sup-Omega-n} 
	\tilde{\beta} \leq  
	\inf_{q \in L^{p'}_0(\Omega_N) \setminus \{0\}} \sup_{v \in W^{1,p}_0(\Omega_N)^d \setminus \{0\}} \frac{\skp{\divergence v}{q}_{\Omega_N}}{\norm{q}_{L^{p'}(\Omega_N)} \norm{\nabla v}_{L^p(\Omega_N)}} \qquad \text{ for any } N \in \mathbb{N}, 
\end{align} 
holds with constant~$\tilde \beta = \tilde \beta(p)>0 $ independent of $N$. And second, we use the existence of a local Fortin operator~$\Pi_N\colon W^{1,p}(\Omega_N)^d\to X_N$ satisfying, in particular,~\ref{prop:local-stab-approx}. 

	An estimate of the form~\eqref{est:inf-sup-Omega-n} follows from~\cite[Thm.~4.1]{AcostaDuranMuschietti2006} if the domains~$\Omega_N$ are John domains with John constants uniformly bounded in~$N$, see also~\cite[Thm.~5.2]{DieningRuzickaSchumacher2010} for estimates in weighted spaces using a decomposition technique. 
	For the special case of Lipschitz domains~$(\Omega_N)_{N \in \mathbb{N}}$ approximating a Lipschitz domain~$\Omega$ sufficiently well, and $p = 2$, \eqref{est:inf-sup-Omega-n} was also proven in~\cite[Thm.~4.4]{BernardiCostabelDaugeEtAl2016}.  
	For~$p = 2$ and certain unions of star-shaped domains, the dependence of~$\tilde \beta$ in  
	\eqref{est:inf-sup-Omega-n} on geometric parameters was investigated in~\cite[Cor.~28, Thm.~35]{GuzmanSalgado2021}. 

For domains satisfying~\eqref{est:inf-sup-Omega-n}  the standard Fortin argument yields discrete inf-sup stability~\eqref{est:discr-inf-sup-Omega-n} for~$p \in (1,\infty)$, and hence in particular for non-Newtonian fluids. 
Notably, the domains need not be Lipschitz.    
The following result holds for~$p \in (1,\infty)$, whereas  ~\cite[Thm.~1]{GuzmanOlshanskii2018} is presented for~$p = 2$. 
While the arguments are essentially standard, we include the details for the sake of completeness, as we are not aware of suitable references. 

\begin{theorem}[inf-sup stability for domain approximation]\label{thm:domain approximation}
	
	Let~$K \subset \setR^d$ be a bounded simply connected Lipschitz domain with polyhedral boundary.
	Let~$(\mathcal{S}_N)_{N \in \mathbb{N}}$ be a sequence of shape-regular triangulations of~$K$. 
	For each~$N \in \mathbb{N}$ let~$\tria_N \subset \mathcal{S}_N$ and define the bounded domain~$\Omega_N \coloneqq \interior(\cup_{T \in \tria_N} T)$, which has polyhedral boundary and no slits, and assume that $\Omega_N$ is simply connected. 
	Assume that for~$p \in (1,\infty)$ there is a constant~$\tilde \beta $ such that~\eqref{est:inf-sup-Omega-n} holds uniformly in $N \in \mathbb{N}$.
	Let~$(X_N,Q_N)$ be as defined in~\eqref{def:Xh} and~\eqref{def:Qh} with respect to~$\tria_N$ and define~$X_{N,0} \coloneqq X_N \cap W^{1,\infty}_0(\Omega_{N})^d$.  
	Assume that there is a sequence of local Fortin operators~$\Pi_N \colon W^{1,1}(\Omega_N)^d \to X_N$ that satisfy~\ref{prop:div-pres}, \ref{prop:trace-pres}~\ref{itm:global_zerotrace} and~\ref{prop:local-stab-approx} uniformly in $N \in \mathbb{N}$. 
	
	Then, there is a constant~$\infsupdiscr>0$ depending only on~$p, \tilde \beta$ and on the shape-regularity constant of~$\tria_N$, $d$, $s$,  $\dim X(T)$ and~$\dim Q(T)$, 
	 such that the discrete inf-sup condition~\eqref{est:discr-inf-sup-Omega-n} holds uniformly in $N$.       
\end{theorem}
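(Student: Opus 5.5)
We follow the standard Fortin argument, cf.~\cite[Prop.~5.4.2]{Boffi2013}. Fix $N\in\mathbb{N}$ and $q_N\in\overline Q_N\setminus\{0\}$. Since $\overline Q_N\subset L^{p'}_0(\Omega_N)$, the uniform continuous inf-sup condition~\eqref{est:inf-sup-Omega-n} provides some $v\in W^{1,p}_0(\Omega_N)^d\setminus\{0\}$ with
\begin{align*}
	\tfrac{\tilde\beta}{2}\norm{q_N}_{L^{p'}(\Omega_N)}\norm{\nabla v}_{L^p(\Omega_N)}\le \skp{\divergence v}{q_N}_{\Omega_N}.
\end{align*}
We then set $v_N\coloneqq \Pi_N v$. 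By~\ref{prop:trace-pres}~\ref{itm:global_zerotrace}, $v_N\in X_{N,0}$. By~\ref{prop:div-pres}, $\skp{\divergence v_N}{q_N}_{\Omega_N}=\skp{\divergence v}{q_N}_{\Omega_N}$. It follows that $v_N\neq0$, and the claim reduces to the uniform seminorm bound $\norm{\nabla \Pi_N v}_{L^p(\Omega_N)}\le c\norm{\nabla v}_{L^p(\Omega_N)}$ for all $v\in W^{1,p}_0(\Omega_N)^d$, where $c$ depends only on the listed quantities. Once we have it, $\infsupdiscr=\tilde\beta/(2c)$.

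The main obstacle is this seminorm stability. Property~\ref{prop:local-stab-approx} gives local bounds involving both $\norm{v}_{L^p}$ and $h_T\norm{\nabla v}_{L^p}$. A global Poincaré inequality on $\Omega_N$ cannot be used to absorb the $L^p$ term, because its constant need not be uniform in $N$ and the mesh is not quasi-uniform. We therefore use the approximation estimate~\eqref{eq:prop_local-approx} with $\ell=m=1$, namely
\begin{align*}
	\norm{\nabla(v-\Pi_N v)}_{L^p(T)}\lesssim\norm{\nabla v}_{L^p(\Omega_N^s(T))}.
\end{align*}
This estimate is purely local, and its constant depends only on shape regularity, $d$, $s$, $p$, $\dim X(T)$ and $\dim Q(T)$, uniformly in $N$. (If only~\eqref{eq:prop_local-stab} were available, we would reduce to it by applying $\Pi_N$ to $v-c_T$ for a local constant $c_T$. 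This requires $\Pi_N$ to reproduce constants locally, which follows from the projection property and locality. The Poincaré inequality on the patch $\Omega_N^s(T)$ then has a uniform constant, since patches are connected unions of boundedly many shape-regular simplices.)

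Summing the $p$th powers over $T\in\tria_N$, we use that each simplex belongs to at most a bounded number (depending on $s$ and shape regularity) of patches $\Omega_N^s(T)$. This gives $\norm{\nabla(v-\Pi_Nv)}_{L^p(\Omega_N)}\lesssim\norm{\nabla v}_{L^p(\Omega_N)}$, and the triangle inequality yields the required bound, as in Remark~\ref{rmk:properties_Fortin}~\ref{rmk:stab_approx}. All constants are independent of $N$ and $h_N$ because they arise only locally, through shape regularity of $\mathcal S_N\supset\tria_N$, and never through global geometric properties of $\Omega_N$; the geometry enters solely via $\tilde\beta$. Dividing by $\norm{q_N}_{L^{p'}}\norm{\nabla v_N}_{L^p}$ and taking the infimum over $q_N$ concludes the proof.
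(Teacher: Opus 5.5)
Your proposal is correct and takes essentially the paper's approach: the standard Fortin argument combining~\eqref{est:inf-sup-Omega-n} with \ref{prop:div-pres} and \ref{prop:trace-pres}~\ref{itm:global_zerotrace}, where uniformity in $N$ comes only from locality, as in Remark~\ref{rmk:properties_Fortin}~\ref{rmk:stab_approx}. Your use of~\eqref{eq:prop_local-approx} with $\ell=m=1$ for the seminorm bound spells out what the paper cites as \ref{prop:global-stab-approx}~\ref{itm:prop_global_H1semi-stab}; the parenthetical fallback is unnecessary and would in any case rely on \ref{prop:lin-proj}, which is not among the hypotheses.
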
 

\begin{proof} Note that for~$v \in W^{1,p}_0(\Omega_N)^d$, by trace preservation~in~\ref{prop:trace-pres}~\ref{itm:global_zerotrace} one has  $\Pi_N v \in X_{N,0}$. 
	By Poincaré's inequality, it follows that~$ \norm{\nabla \Pi_N v }_{L^p(\Omega_N)} = 0$ if and only if~$ \Pi_N v \equiv 0$. 
For such~$v$ it follows with \ref{prop:div-pres} that  
\begin{align*}
 	\skp{\divergence v}{ q_N} = \skp{\divergence \Pi_N v}{ q_N} = 0 \qquad \text{ for all } q_N \in \overline Q_N. 
\end{align*}
For this reason, using the inf-sup stability in~\eqref{est:inf-sup-Omega-n} for any~$q_N \in \overline Q_N \subset L^{p'}_0(\Omega_N)$, and the stability property~\ref{prop:global-stab-approx}~\ref{itm:prop_global_H1semi-stab} we obtain
\begin{align*}
\tilde \beta 	\norm{q_N}_{L^{p'}(\Omega_N)}  
&\leq 
 \sup_{v \in W^{1,p}_0(\Omega_N)^d \setminus \{0\}} \frac{\skp{\divergence v}{q_N}_{\Omega_N}}{ \norm{\nabla v}_{L^p(\Omega_N)}} \\
 &= 
 \sup_{v \in W^{1,p}_0(\Omega_N)^d 
 \colon \nabla \Pi_N v \not\equiv 0} \frac{\skp{\divergence \Pi_N v}{q_N}_{\Omega_N}}{ \norm{\nabla \Pi_N v}_{L^p(\Omega_N)}}  \frac{ \norm{\nabla \Pi_N v}_{L^p(\Omega_N)}}{ \norm{\nabla v}_{L^p(\Omega_N)}}
\\
& \leq c  \sup_{v_N \in X_{N,0}  \setminus \{0\}} 
 \frac{\skp{\divergence v_N}{q_N}_{\Omega_N}}{ \norm{\nabla  v_N}_{L^p(\Omega_N)}}. 
\end{align*}
Furthermore, due to the locality of~$\Pi_N$ as in~\ref{prop:local-stab-approx} the constant~$c>0$ depends only on the shape regularity of~$(\tria_N)_{N \in \mathbb{N}}$, $p$, $d$, $s$,  $\dim X(T)$ and $\dim Q(T)$, 
 see Remark~\ref{rmk:properties_Fortin}~\ref{rmk:stab_approx}.  
 	Then the claim follows with~$\infsupdiscr \coloneqq c^{-1} \tilde \beta$.  
\end{proof}

The background triangulation of $K$ is not needed in the proof if one works with a sequence of simply connected domains $(\Omega_N)_{N \in \mathbb{N}}$ with polyhedral boundary without slits and shape-regular triangulations $(\tria_N)_{N \in \mathbb{N}}$. 
However, in applications of unfitted meshes, often a domain $K$ is available and may, for example, be chosen as a hyperrectangle.

\begin{remark}\label{rmk:Bogovskii} 	
The inf-sup stability~\eqref{est:inf-sup-Omega-n} for~$p \in (1,\infty)$ uniformly in~$N \in \mathbb{N}$ follows from  the existence of bounded linear \Bogovskii{} operators~$\mathcal{B}_N \colon L^{p}_0(\Omega_N)\to W^{1,p}_0(\Omega_N)^d$ satisfying 
	\begin{alignat*}{3}
		\divergence \mathcal{B}_N q &= q \qquad &&\text{ for any } q \in L^{p}_0(\Omega_N),\\
		\norm{\nabla \mathcal{B}_N q}_{L^p(\Omega_N)} &\leq c \norm{q}_{L^p(\Omega_N)} \qquad &&\text{ for any }  q \in L^{p}_0(\Omega_N),
	\end{alignat*}  
	uniformly in $N \in \mathbb{N}$. 
	Thanks to~\cite[Thm.~4.1]{AcostaDuranMuschietti2006} and \cite[Thm.~5.2]{DieningRuzickaSchumacher2010} existence of a \Bogovskii{} operator (independent of $p$) is guaranteed with stability constant~$c$  depending only on the exponent~$p\in (1,\infty)$ and constants~$\sigma_1, \sigma_2>0$, provided that~$(\Omega_N)_{N \in \mathbb{N}}$ are John domains with uniform constants appearing in the notion of John domains. For example, one can use~$\sigma_1, \sigma_2>0$ in the definition of the \emph{emanating chain condition} in~\cite[Def.~3.11]{DieningRuzickaSchumacher2010}, see also Theorem~3.8 therein.
The definition of a sequence of John domains with uniform John constants excludes for example bounded domains~$(\Omega_N)_{N \in \mathbb{N}}$ which degenerate locally for a single $N \in \mathbb{N}$, or as~$N \to \infty$. 
Furthermore, it has been shown in~\cite[Cor.~1.1]{JiangKauranenKoskela2014} that in dimension $d = 2$ for a bounded, simply connected domain $\Omega_N$ the inf-sup stability~\eqref{est:inf-sup-Omega-n} holds if and only if $\Omega_N$ is a John domain. 
This highlights the relevance of the notion of John domains in this context. 

In~\cite{GuzmanOlshanskii2018}~the authors consider a sequence of discrete domains~$(\Omega_N)_{N \in \mathbb{N}}$, with~$\Omega_N \subset \Omega$ for a fixed bounded Lipschitz domain~$\Omega$.     
Their Assumptions 1 and 2 exclude slit domains~$\Omega_N$ and are satisfied for sufficiently fine approximations of a Lipschitz domain $\Omega$.  
We expect that, in their setting, the sequence~$(\Omega_N)_{N \in \mathbb{N}}$ can be shown to consist of John domains with uniform constants, provided that the mesh size is sufficiently small. 
In this case, Theorem~\ref{thm:domain approximation} would be an extension of Thm.~1 in \cite{GuzmanOlshanskii2018}. 
\end{remark}


\section{Local Fortin operators}\label{sec:localFortin}

In this section we construct local Fortin operators for several finite elements. 
After summarising the general framework in Section~\ref{sec:framework-constr} we first consider finite element pairs with discontinuous pressure functions in Section~\ref{subsec:discont}. 
We review the construction for the Bernardi--Raugel element~\cite{BernardiRaugel1985} for~$d\in\{2,3\}$, the $P_d-P_0$ element~\cite[Sec.~8.4.3]{Boffi2013}, the conforming Crouzeix--Raviart element for~$d\in\{2,3\}$ ~\cite{CrouzeixRaviart1973} and its higher-order generalisation for $k \geq d$~\cite{Mansfield1982}, and the Guzmán--Neilan element~\cite{Guzman2014a,Guzman2014b}. 
For those elements, the Fortin operators constructed in the literature for the homogeneous case do not need any modification to satisfy~\ref{prop:lin-proj}--\ref{prop:local-stab-approx}.  
We present their construction in a unifying framework and we prove the extended list of properties.

Then, in Section~\ref{subsec:cont} we consider finite element pairs with continuous pressure functions.  
 This includes a MINI type element~\cite{ArnoldBrezziFortin1984} for which we modify the velocity space in order to construct a local trace-preserving Fortin operator. 
 For the generalisation~\cite[Sec.~8.8.2]{Boffi2013} of the Taylor--Hood element~\cite{Taylor1973} for $k\geq d$ we present a local construction of a Fortin operator satisfying~\ref{prop:lin-proj}--\ref{prop:local-stab-approx}.

\subsection{General construction principle}\label{sec:framework-constr}
For the construction of a local Fortin operator it is customary to  start from a (quasi)-interpolation operator and correct it to ensure the divergence preservation. 
We refer to~\cite{ArnoldBrezziFortin1984} for an early contribution in this spirit, and to~\cite[Prop.~5.4.4]{Boffi2013} for the general principle.  

Let us review the strategy for the homogeneous case of a linear Fortin operator~$\Pi_{h,0}\colon W^{1,1}_0(\Omega)^d \to X_{h,0}$ preserving the divergence of functions $v\in W_0^{1,1}(\Omega)^d$ in the dual of the pressure space. 
The operator is defined by 
\begin{align}\label{def:Fortin-hom}
\Pi_{h,0} v\coloneqq I_h v + C_h(v-I_hv) \qquad \text{ for  } v\in W^{1,1}_0(\Omega)^d,
\end{align}
where~$I_h \colon W^{1,1}_0(\Omega)^d \to X_{h,0}$ is a suitable linear quasi-interpolation operator such as the Scott--Zhang operator~\cite{ScottZhang1990}, and~$C_h \colon W^{1,1}_0(\Omega)^d \to X_{h,0}$ is a linear operator constructed to satisfy
\begin{align}\label{eq:Ch_div-pres}
	\int_{\Omega} \divergence (v-C_hv)~q_h \dx = 0 \qquad \text{for all }q_h\in \overline{Q}_h. 
\end{align}
This ensures the divergence-preservation property of $\Pi_{h,0}$.  
One may equivalently formulate~\eqref{def:Fortin-hom} as 
\begin{align}\label{eq:constr_hom}
	(I-\Pi_{h,0}) = (I-C_h)(I-I_h).
\end{align}
If both operators~$I_h,C_h$ are local, then~$\Pi_{h,0}$ is local as well, and approximation properties of $\Pi_{h,0}$ follow from approximation properties of~$I_h$ and stability properties of~$C_h$. 
Furthermore, $\Pi_{h,0}$ is a linear projection if $I_h$ is. 
The operator $\Pi_{h,0}$ can be extended to $W^{1,1}(\Omega)^d$ with preservation of zero (normal) traces as in~\ref{prop:trace-pres}~\ref{itm:global_zerotrace},  \ref{itm:global_zeronormaltrace}, if~$I_h, C_h$ preserve those zero traces.

We extend this design principle to the case of~$\Pi_h\colon W^{1,1}(\Omega)^d\to X_h$ satisfying the divergence preservation~\ref{prop:div-pres} for all~$v\in W^{1,1}(\Omega)^d$ and all trace-preservation properties~\ref{prop:trace-pres}~\ref{itm:global_zerotrace}--\ref{itm:global_meantrace}. 
Starting from the Scott--Zhang operator, see Proposition~\ref{prop:SZinterp} and Lemma~\ref{lem:SZinterp}, we correct it with respect to trace preservation and divergence preservation for functions with nonzero trace. 
Specifically, we design~$\Pi_h\colon W^{1,1}(\Omega)^d\to X_h$ via  
\begin{align}\label{eq:constr_inhom}
	(I-\Pi_h) = (I-C_h)(I-I_h) = (I-C_h^1)(I-C_h^0)(I-I_h),
\end{align}
with two correction operators $C_h^j\colon W^{1,1}(\Omega)^d\to X_h$ for $j \in \{0,1\}$. 
We now collect sufficient conditions on the operators~$C_h^j$ to ensure that the Fortin operator $\Pi_h$ in~\eqref{eq:constr_inhom} satisfies properties~\ref{prop:lin-proj}--\ref{prop:local-stab-approx}. 

\begin{proposition}\label{prop:Chj_operators}
Let Assumption~\ref{ass:FEM} be satisfied with~$k\in\N$ and consider the quasi-interpolation operator~$I_h\colon W^{1,1}(\Omega)^d\to X_h$ as in Lemma~\ref{lem:SZinterp}. 
	Furthermore, let $C_h^0 \colon  W^{1,1}(\Omega)^d\to X_h$ and  $C_h^1 \colon  W^{1,1}(\Omega)^d\to X_{h,0}$ be bounded linear operators such that the following properties hold: 
	\begin{enumerate}
		\item\label{itm:lemma_div-pres} 
		The operator~$C_h\coloneqq C_h^0+C_h^1(I-C_h^0)$ satisfies 
		\begin{align*}
			\int_{\Omega} \divergence(v-C_hv) q_h \dx = 0 \qquad \text{for any } q_h\in\overline{Q}_h\text{ and any }v\in W^{1,1}(\Omega)^d;
		\end{align*}
		\item\label{itm:lemma_stab_approx} 
		For~$j \in \{0,1\}$, the operator $C_h^j$ is locally $W^{1,1}$-stable, i.e., there exists $s(j)\in\N_0$ such that
		\begin{align}\label{eq:Chj-stab}
			\dashint_T |C_h^j v|\dx  \lesssim \dashint_{\Omega_h^{s(j)}(T)} |v| \dx + h_T \dashint_{\Omega_h^{s(j)}(T)} |\nabla v| \dx 
			\qquad \text{for any } v\in W^{1,1}(\Omega)^d,
		\end{align}
	for any~$T \in \tria_h$, uniformly in~$h$; 
		
		\item\label{itm:lemma_trace-int} 
		The operator~$C_h^0$ satisfies~$C_h^0(W^{1,1}_0(\Omega)^d)
			\subset X_{h,0}$, \, $C_h^0(W^{1,1}_n(\Omega)) \subset X_{h,n} $ and 
			\begin{align}\label{itm:Ch0-trace-int}
			\quad 	\int_{f} \tr(v-C_h^0 v)\cdot n \dsig = 0
			\qquad \text{ for any } v\in W^{1,1}(\Omega)^d~\text{ and any } f\in\facets^\partial(\tria_h).
			\end{align} 
	\end{enumerate}
	Then~$\Pi_h\colon W^{1,1}(\Omega)^d\to X_h$, as defined in \eqref{eq:constr_inhom}, satisfies properties \ref{prop:lin-proj}--\ref{prop:local-stab-approx} with $s=s(0)+s(1)+1$. 
	
	Furthermore, if~$C_h^0$ is constructed locally on boundary facets and~$I_h$ is chosen appropriately, then also~\ref{prop:trace-pres}$_{\Gamma}$ holds, see Remark~\ref{rmk:properties_Fortin}~\ref{itm:rmk_part-bd}. 
\end{proposition}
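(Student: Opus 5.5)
Expanding \eqref{eq:constr_inhom} gives
\[
\Pi_h = I_h + C_h(I-I_h), \qquad C_h = C_h^0 + C_h^1(I-C_h^0),
\]
so $\Pi_h$ is linear and maps into $X_h$. I would verify \ref{prop:lin-proj}--\ref{prop:local-stab-approx} one at a time, each from the corresponding hypothesis on $I_h$ and $C_h^j$.

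For \ref{prop:lin-proj}: if $v\in X_h$, then $(I-I_h)v=0$ by Lemma~\ref{lem:SZinterp}, hence $\Pi_h v = v$.

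For \ref{prop:div-pres}: by hypothesis~\ref{itm:lemma_div-pres} applied to $w=(I-I_h)v$,
\[
\skp{\divergence(v-\Pi_h v)}{q_h} = \skp{\divergence (I-C_h)w}{q_h} = 0 .
\]

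For \ref{prop:trace-pres}\,\ref{itm:global_zerotrace} and \ref{itm:global_zeronormaltrace}, take $\star\in\{0,n\}$ and $v\in W^{1,1}_\star$. Then $I_h v\in X_{h,\star}$, so $w\coloneqq v-I_hv\in W^{1,1}_\star$, and $C_h^0 w\in X_{h,\star}$ by~\ref{itm:lemma_trace-int}. Also $C_h^1$ maps into $X_{h,0}\subset X_{h,\star}$. Hence $\Pi_h v=I_hv+C_h^0w+C_h^1(w-C_h^0w)\in X_{h,\star}$.

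For \ref{itm:global_meantrace}, the key point is that $\Pi_h v$ has the same facet normal-flux integrals as $v$. Indeed, $(I-\Pi_h)v=(I-C_h^1)u$ with $u=(I-C_h^0)w$. By \eqref{itm:Ch0-trace-int}, $\int_f \tr(u)\cdot n\dsig=0$ on every boundary facet. Since $C_h^1u$ has zero trace, the same holds for $(I-\Pi_h)v$. Summing over $f\in\facets^\partial(\tria_h)$ gives
\[
\int_{\partial\Omega}\tr(\Pi_h v)\cdot n\dsig=\int_{\partial\Omega}\tr(v)\cdot n\dsig = 0
\]
for $v\in W^{1,1}_\sim(\Omega)$, as required.

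For the local estimates, by Remark~\ref{rmk:properties_Fortin}\,\ref{rmk:stab_approx} it suffices to prove \eqref{eq:prop_local-approx}. Write
\[
v-\Pi_h v = w - C_h^0 w - C_h^1 u .
\]
For $g\in\{C_h^0w,\,C_h^1u\}$, $g|_T$ lies in the finite-dimensional space $X(T)$. By inverse estimates and norm equivalence on $X(T)$, transferred from the reference element via \eqref{eq:scaling},
\[
h_T^{\ell}\norm{\nabla^\ell g}_{L^p(T)}\lesssim |T|^{1/p}\dashint_T|g|\dx .
\]
Next I apply \eqref{eq:Chj-stab}: for $C_h^1u$ it bounds the mean of $|C_h^1 u|$ over $T$ by the means of $|u|$ and $h_T|\nabla u|$ over $\Omega_h^{s(1)}(T)$. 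Since $u=w-C_h^0w$, I apply \eqref{eq:Chj-stab} for $C_h^0$ on each simplex $T'\subset\Omega_h^{s(1)}(T)$, which involves $w$ on $\Omega_h^{s(0)+s(1)}(T)$. Shape regularity ensures comparable sizes of neighbouring simplices and a bounded number of simplices in each patch. Hölder's inequality then converts the $L^1$ means into $L^p$ norms, $h_{T'}\eqsim h_T$ on the patch, and the $\nabla C_h^0 w$ term is handled on each $T'$ by the same inverse estimate. This yields
\[
h_T^{\ell}\norm{\nabla^\ell(v-\Pi_hv)}_{L^p(T)}\lesssim \sum_{T'\subset \Omega_h^{s(0)+s(1)}(T)}\Bigl(\norm{w}_{L^p(T')}+h_{T'}\norm{\nabla w}_{L^p(T')}\Bigr),
\]
with the term $\norm{\nabla^\ell w}_{L^p(T)}$ itself also included on the right.

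Finally, Lemma~\ref{lem:SZinterp}\,\ref{itm:Ih-local_approx} with $\ell\in\{0,1\}$ bounds each term by $h_T^{m}\norm{\nabla^m v}_{L^p(\Omega_h^1(T'))}$. This enlarges the patch by one layer, giving $s=s(0)+s(1)+1$.

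The main obstacle is this bookkeeping of nested patches and scaling. The inverse estimate requires $\nabla^\ell$ of a piecewise polynomial on a single simplex, which is valid because $X(T)$ is finite-dimensional. Shape regularity is needed to compare $h_{T'}$ with $h_T$ and $|T'|$ with $|T|$ inside $\Omega_h^{s}(T)$, and to bound the number of simplices in each patch. For higher $\ell$, the terms $\nabla^\ell w$ are handled directly by Lemma~\ref{lem:SZinterp}.

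For the statement \ref{prop:trace-pres}$_\Gamma$, I would repeat the trace argument with Remark~\ref{rmk:SZ-discr-traces}\,\ref{itm:SZ-part-bd}. If $C_h^0$ acts facet-locally, then zero (normal) traces on $\Gamma$ are preserved by each factor of $\Pi_h$.
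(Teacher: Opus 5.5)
Your proof is correct and follows essentially the same route as the paper. Linearity and the projection property come from $I_h\big|_{X_h}=I$, and divergence preservation from hypothesis (a) applied to $w=v-I_hv$. Trace preservation uses the zero-trace properties of $I_h$ and $C_h^0$, the fact that $C_h^1$ maps into $X_{h,0}$, and the boundary facet flux identity. The local approximation estimate follows from inverse estimates, the nested stability bounds and the Scott--Zhang approximation property. The only cosmetic difference is ordering: the paper first combines the two stability hypotheses into one local $W^{1,1}$-stability bound for $C_h=C_h^0+C_h^1(I-C_h^0)$ on $\Omega_h^{s(0)+s(1)}(T)$ and then inserts $w$, whereas you unroll the nested patches directly.
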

\begin{proof}	
 Let us first collect the properties of the operator~$C_h = C_h^0+C_h^1(I-C_h^0) $.    
	By inverse estimates the stability in~\eqref{eq:Chj-stab} implies that 
		\begin{align*}
		\dashint_T |C_h^j v|\dx + h_T \dashint_T |\nabla( C_h^j v)|\dx  \lesssim \dashint_{\Omega_h^{s(j)}(T)} |v| \dx + h_T \dashint_{\Omega_h^{s(j)}(T)} |\nabla v| \dx,
	\end{align*}
	and hence, with~$\bar{s} \coloneqq s(0)+ s(1)$, the operator~$C_h$ satisfies   
	\begin{align}\label{eq:Ch-stab}
		\dashint_T |C_h v|\dx + h_T \dashint_T |\nabla ( C_h v)|\dx  \lesssim \dashint_{\Omega_h^{\bar{s}}(T)} |v| \dx + h_T \dashint_{\Omega_h^{\bar{s}}(T)} |\nabla v| \dx
	\end{align}
	for any~$v\in W^{1,1}(\Omega)^d$, for any $T \in \tria_h$, uniformly in $h$. 
	Furthermore, since~$C_h^1$ maps to~$X_{h,0}$, property~\ref{itm:lemma_trace-int} implies that~$C_h(W^{1,1}_0(\Omega)^d)
	\subset X_{h,0}$, \; $C_h(W^{1,1}_n(\Omega)) \subset X_{h,n} $ and 
	\begin{align}\label{itm:Ch-trace-int}
		\quad 	\int_{f} \tr(v-C_h v)\cdot n \dsig = 0\quad \text{for any } v\in W^{1,1}(\Omega)^d~\text{and all } f\in\facets^\partial(\tria_h).
	\end{align} 
	We proceed to show~\ref{prop:lin-proj}--\ref{prop:local-stab-approx}. 

	\textit{Verification of~\ref{prop:lin-proj}}: Since~$I_h$ and~$C_h$ are linear, so is~$\Pi_h$. 
	The projection property follows, since by Lemma~\ref{lem:SZinterp}~$I_h$ is the identity on $X_h$, and by linearity we have~$C_h(0) = 0$. 
	
	\textit{Verification of~\ref{prop:div-pres}}: The divergence preservation~\ref{prop:div-pres} follows from the construction~\eqref{eq:constr_inhom} and property~\ref{itm:lemma_div-pres}.
	
	\textit{Verification of~\ref{prop:trace-pres}}: 
	Since both~$I_h$ and~$C_h^0$ preserve zero traces and zero normal traces, see  Proposition~\ref{prop:SZinterp}, and since~$C_h^1$ maps to $X_{h,0}$,  it follows that~$\Pi_h$ preserves these traces as well. 
	Hence~\ref{prop:trace-pres}~\ref{itm:global_zerotrace} and~\ref{itm:global_zeronormaltrace} are satisfied. 
	For arbitrary~$v\in W^{1,1}(\Omega)^d$ we set~$w\coloneqq v-I_h v$. 
	By~\eqref{itm:Ch-trace-int} and~\eqref{eq:constr_inhom} we have that
	\begin{align*}
	\int_f \tr((I-\Pi_h)(v))\cdot n\dsig 
	&= \int_f 	\tr((I-C_h)(w))\cdot n \dsig  = 0, 
	\end{align*}
	for any $f \in \facets^\partial(\tria_h)$.  
	Hence, the operator~$\Pi_h$ also satisfies~\ref{prop:trace-pres}~\ref{itm:global_meantrace}. \\
	Furthermore, if $I_h$ is chosen appropriately, see Remark~\ref{rmk:SZ-discr-traces}~\ref{itm:SZ-part-bd}, and $C_h^0$ is local on boundary facets, then also \ref{prop:trace-pres}$_{\Gamma}$ as described in Remark~\ref{rmk:properties_Fortin}~\ref{itm:rmk_part-bd} holds. 

\textit{Verification of~\ref{prop:global-stab-approx} and \ref{prop:local-stab-approx}}: 
In view of Remark~\ref{rmk:properties_Fortin}~\ref{rmk:stab_approx} it suffices to verify the local approximation property \eqref{eq:prop_local-approx}. 
The ingredients are the stability of the operator~$C_h$ in~\eqref{eq:Ch-stab}, and the approximation property of the interpolation operator $I_h$, see Proposition~\ref{prop:SZinterp}~\ref{itm:Ih-local_approx}. 
Let~$p\in[1,\infty]$ and let~$\ell \in \mathbb{N}_0$ and~$m \in \mathbb{N}$  with $\ell \leq m\leq k+1$. 
It follows from~\eqref{eq:Ch-stab} by inverse estimates, Hölder's inequality, scaling properties and the shape regularity of~$(\tria_h)_h$ that 
\begin{align}\label{est:Chst}
		h_T^{\ell} \norm{\nabla^\ell C_h v}_{L^p(T)} \lesssim \norm{v}_{L^p(\Omega_h^{\bar{s}}(T))} + h_T \norm{\nabla v}_{L^p(\Omega_h^{\bar{s}}(T))}
\end{align}
holds for all~$v\in W^{1,p}(\Omega)^d$, for all $T\in \tria_h$, independently of $h$.   
Using the definition~\eqref{eq:constr_inhom} of~$\Pi_h$, \eqref{est:Chst} and Proposition~\ref{prop:SZinterp}~\ref{itm:Ih-local_approx}, and setting~$s\coloneqq 1+\bar{s}$ we obtain  
\begin{align*}
	h_T^\ell \norm{\nabla^\ell ((I-\Pi_h)(v))}_{L^p(T)} 
	&= h_T^\ell \norm{\nabla^\ell \left((I-C_h)(I-I_h)(v)\right)}_{L^p(T)} \\
	&\lesssim \norm{v-I_h v}_{L^p(\Omega_h^{\bar{s}}(T))} + h_T \norm{\nabla(v-I_h v)}_{L^p(\Omega_h^{\bar{s}}(T))} \\
	&\qquad + h_T^\ell \norm{\nabla^\ell (v-I_h v)}_{L^p(T)} \\
	&\lesssim h_T^m \norm{\nabla^m v}_{L^p(\Omega_h^{s}(T))}
\end{align*}
holds for all~$v\in W^{m,p}(\Omega)^d$, for any $T \in \tria_h$, uniformly in $h$.   
Thus, $\Pi_h$ satisfies the desired local approximation property~\eqref{eq:prop_local-approx}, which proves the claim. 
\end{proof}

\begin{remark}\label{rmk:global}
	If one of the correction operators~$C_h^0, C_h^1$ in the construction of~$\Pi_h$ in~\eqref{eq:constr_inhom} is not local, then the arguments in the proof of Proposition~\ref{prop:Chj_operators} yield the existence of a Fortin operator satisfying \ref{prop:lin-proj}--\ref{prop:global-stab-approx}, but not locality. 
\end{remark}

\subsection{Elements with discontinuous pressure space}\label{subsec:discont}
In the following we present constructions of local Fortin operators for finite elements with discontinuous pressure spaces, corresponding to the choice~$\mathcal{Q}=L^{\infty}(\Omega)$ in~\eqref{def:Qh}. 
The construction does not require any modification compared to the homogeneous case, but we verify all properties~\ref{prop:lin-proj}--\ref{prop:local-stab-approx}. 
 
\subsubsection{Bernardi--Raugel element}\label{subsec:Bernardi-Raugel-v}

In the homogeneous case for $d\in\{2,3\}$, the Bernardi--Raugel element  was originally presented in~\cite{BernardiRaugel1985}, see also \cite[Lem.~5.4.5]{Bernardi2024}.
It is a special element because the quasi-interpolation operator of Scott--Zhang type with respect to its degrees of freedom (cf.~Lemma~\ref{lem:SZinterp}) is already a Fortin operator satisfying all properties~\ref{prop:lin-proj}--\ref{prop:local-stab-approx}.

In the lowest-order case (${k} = 1$)  for $d\in\{2,3\}$, see \cite[Sec.~II]{BernardiRaugel1985}, the velocity space $X_h$ is given by~\eqref{def:Xh} with $X(T) = \mathcal{P}_1(T)^d + B_n(T)$, where $B_n(T)$ is the local facet bubble space defined in \eqref{def:local_normal_face_bubblespace}. 
The pressure space $Q_h$ as in \eqref{def:Qh} consists of discontinuous functions with~$\mathcal{Q}=L^\infty(\Omega)$ and $Q(T)=\mathcal{P}_0(T)$. 

\begin{remark}\label{rmk:BR-global} 
	The global spaces of the lowest-order Bernardi--Raugel element for~$d \in \{2,3\}$ are $$Q_h=\mathcal{L}_0^0(\tria_h)\qquad \text{and}\qquad	X_h = \mathcal{L}_1^1(\tria_h)^d + B_n(\tria_h),$$ with facet bubble space  $B_n(\tria_h)\subset \mathcal{L}_d^1(\tria_h)^d$  as in \eqref{def:global_normal_face_bubblespace}. 
	The pair~$(X_h,Q_h)$  satisfies Assumption~\ref{ass:FEM} with~$k = 1$. 
	Its degrees of freedom can be chosen as those of~$\mathcal{L}^1_1(\tria_h)^d$, together with the additional degrees of freedom, for $v \in W^{1,1}(\Omega)^d$, 
	\begin{align}
		v \mapsto \dashint_f v \cdot n_f \dsig  \qquad \text{ for any } f \in \facets(\tria_h).
	\end{align} 
\end{remark}

\begin{lemma}[lowest-order Bernardi--Raugel]\label{lem:BR_Fortin-v}
	For the lowest-order Bernardi--Raugel element the operator~$\Pi_h\colon W^{1,1}(\Omega)^d \to X_h$ in~\eqref{eq:constr_inhom} with $I_h$ chosen as in Lemma~\ref{lem:SZinterp} with respect to the degrees of freedom in Remark~\ref{rmk:BR-global}, and with $C_h^1\equiv C_h^0 \equiv 0$
    is a Fortin operator satisfying properties~\ref{prop:lin-proj}--\ref{prop:local-stab-approx} with~$k = 1$ and $s = 1$. 
	By suitable choice of $I_h$ also~\ref{prop:trace-pres}$_{\Gamma}$ can be satisfied, see Remark~\ref{rmk:properties_Fortin}~\ref{itm:rmk_part-bd}. 
\end{lemma}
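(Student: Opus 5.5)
With $C_h^0\equiv C_h^1\equiv 0$, the construction \eqref{eq:constr_inhom} reduces to $\Pi_h = I_h$, the Scott--Zhang-type operator of Lemma~\ref{lem:SZinterp} built on the degrees of freedom of Remark~\ref{rmk:BR-global}. The plan is therefore to apply Proposition~\ref{prop:Chj_operators} with $s(0)=s(1)=0$. I will not use the general divergence argument there, which would need a nonzero $C_h$. Instead I will check hypotheses \ref{itm:lemma_div-pres}--\ref{itm:lemma_trace-int} directly for the zero operators, interpreted appropriately. Stability \ref{itm:lemma_stab_approx} is trivial.

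The core of the proof is to show that $I_h$ itself preserves facet normal fluxes. The Scott--Zhang construction in \cite{ScottZhang1990} reproduces every degree of freedom that is an integral over a facet, because on each facet it uses a dual basis tied to that facet. For the facet functionals $v\mapsto \dashint_f v\cdot n_f\dsig$ I therefore choose the averaging set to be $f$ itself. This yields
\begin{align*}
\int_f (v-I_hv)\cdot n_f \dsig = 0 \qquad\text{for all } f\in\facets(\tria_h),\ v\in W^{1,1}(\Omega)^d .
\end{align*}
The point to verify carefully is that the dual basis can be arranged this way. The vertex functionals are Scott--Zhang averages over facets containing the vertex, and the bubble coefficient is then fixed to correct the normal flux on $f$. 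Since $b_f n_f$ is the only basis function with nonzero normal flux on $f$, and the $\mathcal{P}_1$ part is already determined, the facet functional is exactly reproduced. I regard this careful choice of degrees of freedom, and checking that the construction is still a projection onto $X_h$, as the main (if modest) obstacle.

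Divergence preservation \ref{prop:div-pres} then follows by integrating by parts on each $T$ with $q_h|_T$ constant:
\begin{align*}
\int_T \divergence(v-I_hv)\, q_h \dx = q_h|_T\sum_{f\in\facets(\{T\})}\int_f (v-I_hv)\cdot n_T\dsig = 0 .
\end{align*}
This holds for all $q_h\in Q_h$, not only for those in $\overline{Q}_h$. Boundary facets are included, which gives \eqref{itm:Ch0-trace-int} and hence \ref{prop:trace-pres}~\ref{itm:global_meantrace}.

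Items \ref{itm:global_zerotrace} and \ref{itm:global_zeronormaltrace} follow from Lemma~\ref{lem:SZinterp}. The projection property \ref{prop:lin-proj} and the local approximation and stability estimates with $s=1$ come from Proposition~\ref{prop:SZinterp}\ref{itm:Ih-W11stab},\ref{itm:Ih-local_approx} with $k=1$, together with Remark~\ref{rmk:properties_Fortin}\ref{rmk:stab_approx}. Finally, \ref{prop:trace-pres}$_\Gamma$ follows from Remark~\ref{rmk:SZ-discr-traces}\ref{itm:SZ-part-bd}: for vertices on $\Gamma$ we choose facets contained in $\Gamma$, while boundary facet functionals are already local to their own facet.
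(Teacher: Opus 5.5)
Your route is the paper's: $\Pi_h=I_h$, the flux identity $\int_f (v-I_hv)\cdot n_f\dsig=0$ for all $f\in\facets(\tria_h)$, integration by parts on each simplex against piecewise constants, and the Scott--Zhang properties for the rest. Two things need fixing.

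\textbf{Framing.} The appeal to Proposition~\ref{prop:Chj_operators} is wrong. With $C_h^0\equiv C_h^1\equiv 0$, its hypotheses \ref{itm:lemma_div-pres} and \eqref{itm:Ch0-trace-int} would require $\int_\Omega \divergence v\,q_h\dx=0$ and $\int_f v\cdot n\dsig=0$ for every $v\in W^{1,1}(\Omega)^d$, which is false. What you actually do, correctly, is verify \ref{prop:lin-proj}--\ref{prop:local-stab-approx} for $I_h$ directly. The flux identity for $I_h$ plays the role of \eqref{itm:Ch0-trace-int}. Drop the reference to the proposition.

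\textbf{Projection onto $X_h$.} The real gap is the step you flag yourself. It does not follow from Proposition~\ref{prop:SZinterp}, which only gives a projection onto $\mathcal{L}^1_1(\tria_h)^d$. It also fails for the natural reading of your sketch, where the $\mathcal{P}_1$ part comes from Scott--Zhang vertex averages and the bubble coefficient then fixes the flux. Suppose the vertex coefficients use the usual weight $\psi_z$, the $L^2(\sigma_z)$-dual basis of $\mathcal{P}_1(\sigma_z)$. Then $\int_{\sigma_z}\psi_z b_{\sigma_z}\dsig\neq 0$; for $d=2$ it equals $1/6$. So $I_h(b_fn_f)$ picks up a nonzero $\mathcal{P}_1$ part, and a correction that only adds bubbles cannot remove it. Hence $I_h(b_fn_f)\neq b_fn_f$, and you are in the situation of Remark~\ref{rmk:BR-Ch0}.

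The remedy is to take the vertex weights dual to all degrees of freedom of $X_h$ on $\sigma_z$, i.e.\ to impose in addition $\int_{\sigma_z}\psi_z b_{\sigma_z}\dsig=0$. Since $b_{f'}$ vanishes on $\sigma_z$ for every $f'\neq\sigma_z$, the vertex coefficients then annihilate $B_n(\tria_h)$. For $v=v_1+\sum_f\gamma_f b_fn_f\in X_h$, your flux correction then returns exactly $\gamma_f$, so $I_hv=v$. This is the Lagrange-basis version of Lemma~\ref{lem:SZinterp} that the paper relies on. It leaves reproduction of $\mathcal{P}_1$, local stability, the approximation estimates and the flux identity unchanged.
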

\begin{proof}
 Using corresponding local `Lagrange' basis functions with respect to the degrees of freedom\footnote{By this we mean a basis for which evaluation at its own degree of freedom yields one and at the other degrees of freedom it yields zero.} in Remark~\ref{rmk:BR-global}, the construction in~\cite{ScottZhang1990} shows the projection property~\ref{prop:lin-proj}, as well as approximation and stability properties~\ref{prop:local-stab-approx}, and hence also~\ref{prop:global-stab-approx}. 
	Noting that the coefficients for the facet degrees of freedom are simply~$\dashint_f v \cdot n_f \dsig$, it follows from the definition  of~$I_h$ that 
	\begin{align}
	\int_f (I_h v)\cdot n_f \dsig = \int_f v \cdot n_f \dsig \qquad \text{for all }f\in\facets(\tria_h), \label{eq:constrBernardi-Raugel-Ih}
\end{align}
for any $v\in W^{1,1}(\Omega)^d$. 
This ensures the trace-preservation properties involving the normal traces \ref{prop:trace-pres}~\ref{itm:global_zeronormaltrace} and \ref{itm:global_meantrace}. 
For~$v \in W^{1,1}_0(\Omega)^d$ all coefficients of boundary degrees of freedom are zero and hence property \ref{prop:trace-pres}~\ref{itm:global_zerotrace} is satisfied. 
	 
Thus, it suffices to show~\ref{prop:div-pres}. Indeed, using~\eqref{eq:constrBernardi-Raugel-Ih}, for any  piecewise constant pressure function $q_h\in\overline{Q}_h$ we have that 
		\begin{align*}
			\int_T \divergence((I-I_h)(v)) q_h\dx 
			= q_h\big|_T  \int_{\partial T}(I-I_h)(v)\cdot n\dsig = 0,
		\end{align*} 
		for any $v\in W^{1,1}(\Omega)^d$ and for any $T \in \tria_h$.
		Hence $\Pi_h$  preserves the divergence in the dual of $\overline{Q}_h$ locally on each $T \in \tria_h$, and thus also globally. 
		This shows the claim.   
\end{proof}

Since in the following it plays an important role, let us introduce also the operator mapping only to the normal facet bubbles. 
It may be used in an alternative construction of the Fortin operator for the Bernardi--Raugel element. 
Recall the space~$B_n(\tria_h)$ defined in~\eqref{def:global_normal_face_bubblespace}. 

\begin{lemma}[operator $C_h^0$] 
	\label{lem:Ch0}
	There is a linear operator 
	$C_h^0\colon W^{1,1}(\Omega)^d \to B_n(\tria_h)$, which satisfies  $C_h^0(W^{1,1}_0(\Omega)^d)
		\subset W^{1,1}_{0}(\Omega)^d$,  $C_h^0(W^{1,1}_n(\Omega)) \subset W_{n}^{1,1}(\Omega)^d $ and  
	\begin{align}
		\int_f (C_h^0 v)\cdot n_f \dsig = \int_f v \cdot n_f \dsig \qquad \text{for all }f\in\facets(\tria_h), \label{eq:constrBernardi-Raugel}
	\end{align}
	for any $v\in W^{1,1}(\Omega)^d$. Furthermore, it satisfies 
	\begin{align}\label{eq:BR_Chdiv-stab}
		\norm{C_h^0  v}_{L^1(T)} \lesssim \norm{v}_{L^1(T)} + h_T \norm{\nabla v}_{L^1(T)}\qquad \text{for any } v\in W^{1,1}(\Omega)^d,
	\end{align}
	for any $T \in \tria_h$, uniformly in $h$. 
	It is local in the sense that 
	$\support(C_h^0 v) \subset \overline{\Omega_{h}(f)}$ for any $v \in W^{1,1}(\Omega)^d$ with   $\support(v) \subset \overline{\Omega_{h}(f)}$, for any $f \in \facets(\tria_h)$. 
\end{lemma}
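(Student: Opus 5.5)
The operator is defined facet by facet through the normal facet bubbles:
\begin{align*}
C_h^0 v \coloneqq \sum_{f\in\facets(\tria_h)} \alpha_f(v)\, b_f n_f,
\qquad
\alpha_f(v) \coloneqq \frac{\int_f v\cdot n_f \dsig}{\int_f b_f \dsig}.
\end{align*}
This is well defined for $v\in W^{1,1}(\Omega)^d$, since traces lie in $L^1(\partial T)$ for each $T$. It is linear and maps into $B_n(\tria_h)$. The denominator is positive, and on the reference facet it scales like $\int_f b_f\dsig \eqsim |f|$.

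The moment condition \eqref{eq:constrBernardi-Raugel} follows directly from the construction. For $f'\neq f$, the bubble $b_{f'}$ vanishes on $f$, because some vertex of $f'$ does not belong to $f$ and the corresponding hat function vanishes on $f$. Hence
\begin{align*}
\int_f (C_h^0 v)\cdot n_f\dsig = \alpha_f(v)\int_f b_f\dsig = \int_f v\cdot n_f\dsig.
\end{align*}

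The trace properties come from the same vanishing of bubbles on other facets.
\begin{itemize}
\item On a boundary facet $f\in\facets^\partial(\tria_h)$, the trace of $C_h^0v$ equals $\alpha_f(v)\, b_f n_f$.
\item If $v\in W^{1,1}_0(\Omega)^d$, or only $v\in W^{1,1}_n(\Omega)$, then $\tr(v)\cdot n=0$ on $f$. Since $n_f=\pm n$ there, $\alpha_f(v)=0$.
\item Consequently, $C_h^0v$ has zero trace on $\partial\Omega$, which gives both inclusions.
\item Locality is read off from the support: $\support(b_f)=\overline{\Omega_h(f)}$. If $\support(v)\subset\overline{\Omega_h(f)}$, the coefficient $\alpha_{f'}(v)$ can be nonzero only for facets $f'$ of simplices in $\omega_h(f)$.
\end{itemize}
I note that the locality as literally stated would need the wording interpreted as support contained in the patch of such facets. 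The intended content, namely that coefficients depend only on traces on $f$, is immediate.

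The main step is the stability estimate \eqref{eq:BR_Chdiv-stab}. On $T$, only the $d+1$ facets $f\subset\partial T$ contribute, and $\norm{b_fn_f}_{L^1(T)}\eqsim |T|$. It therefore suffices to bound
\begin{align*}
|\alpha_f(v)| \lesssim |f|^{-1}\norm{v}_{L^1(f)}.
\end{align*}
I would then use the scaled trace inequality on $T$,
\begin{align*}
\norm{v}_{L^1(f)} \lesssim h_T^{-1}\norm{v}_{L^1(T)} + \norm{\nabla v}_{L^1(T)},
\end{align*}
obtained by transforming to $\hat T$ with \eqref{eq:scaling} and applying the $W^{1,1}(\hat T)\to L^1(\partial\hat T)$ trace theorem. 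Combining these with $|T|/|f|\eqsim h_T$ from shape regularity \eqref{eq:shape-regularity} gives
\begin{align*}
\norm{C_h^0 v}_{L^1(T)}\lesssim \norm{v}_{L^1(T)}+h_T\norm{\nabla v}_{L^1(T)}.
\end{align*}
Only $T$ itself appears on the right-hand side, because the trace of $v$ on $f$ is taken from $T$. The only care needed is in the scaling constants, which depend solely on shape regularity and $d$.
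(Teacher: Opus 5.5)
Your construction, the moment identity, the trace inclusions and the $L^1$-stability bound via the scaled trace inequality are all correct, and they match the paper. The paper only notes that the moment conditions determine $C_h^0$ uniquely in $B_n(\tria_h)$ and cites a reference for stability; your explicit formula for $\alpha_f(v)$ and your scaling argument supply exactly those two steps.

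The one gap is locality. You stop at the observation that $\alpha_{f'}(v)$ can be nonzero only for facets $f'$ of simplices in $\omega_h(f)$, and you suggest the literal claim needs reinterpretation. It does not: it holds as stated. The missing idea is that $v\in W^{1,1}(\Omega)^d$ has a single-valued trace on every interior facet, so $v$ must have zero trace on the interior part of $\partial\Omega_h(f)$.

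Concretely, let $f'\neq f$ be a facet of some $T\in\omega_h(f)$.
\begin{enumerate}
\item If $f'$ is an interior facet, its other simplex $T'$ cannot contain $f$. Two simplices sharing the distinct facets $f$ and $f'$ would have the same $d+1$ vertices. By conformity the interior of $T'$ is disjoint from $\overline{\Omega_h(f)}$, so $v=0$ a.e.\ on $T'$. The trace of $v$ on $f'$ taken from $T'$ vanishes, and it agrees with the trace taken from $T$. Hence $\alpha_{f'}(v)=0$.
\item If $f'\subset\partial\Omega$, then $\omega_h(f')=\{T\}$, so $\support(b_{f'})=T\subset\overline{\Omega_h(f)}$.
\end{enumerate}
Together with your observation about facets outside $\omega_h(f)$, this shows that $C_h^0v$ is a combination of $b_fn_f$ and boundary-facet bubbles of simplices in $\omega_h(f)$. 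All of these are supported in $\overline{\Omega_h(f)}$, which is the literal locality claim.
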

\begin{proof}
	By construction it is immediate that the operator is uniquely determined and that it is local. 
	One can show that it is locally $W^{1,1}$-stable, see for example~\cite[p.~174]{Tscherpel2018}. 
\end{proof}

\begin{remark}\label{rmk:Ch0}
	The operator $C_h^0$ satisfies the properties in Proposition~\ref{prop:Chj_operators} with $s(0) = 0$ in~\ref{itm:lemma_stab_approx} for general finite element pairs $(X_h,Q_h)_h$, provided that~$B_n(\tria_h) \subset X_h$.
	Hence, in this case, to construct a Fortin operator with properties~\ref{prop:lin-proj}--\ref{prop:local-stab-approx} only the properties of~$C_h^1$ and the  divergence preservation in Proposition~\ref{prop:Chj_operators}~\ref{itm:lemma_div-pres} have to be verified. 
\end{remark}
 
 \begin{remark}\label{rmk:BR-Ch0}
If the projection property of ~$\Pi_h$ onto~$X_h$ or the discrete trace preservation in Remark~\ref{rmk:properties_Fortin}~\ref{itm:rmk_discr-trace}  is not important, one can also construct the Fortin operator for the lowest-order Bernardi--Raugel element by using the standard Scott--Zhang operator~$I_h$ mapping to~$\mathcal{L}^1_1(\tria_h)^d$ and using the correction operator~$C_h^0$ as in Lemma~\ref{lem:Ch0} and~$C_h^1 \equiv 0$. 
This is the approach taken in~\cite[Lem.~5.4.5]{Bernardi2024}. 
 \end{remark}

\begin{remark}\label{rmk:BR-2ndorder}
	The second order Bernardi-Raugel element $(k = 2)$  for $d=3$ was introduced in~\cite[Sec.~III]{BernardiRaugel1985}. 
	The pressure space $Q_h$ in~\eqref{def:Qh} arises by choosing~$\mathcal{Q}=L^\infty(\Omega)$ and $Q(T)=\mathcal{P}_1(T)$ for all $T\in\tria_h$. 
	The velocity space $X_h$ is defined as  in~\eqref{def:Xh} with local velocity space~$X(T)=\mathcal{P}_2(T)^3 + B_n(T) + B(T)^3$. 
	A Fortin operator for the homogeneous case and for continuous functions was constructed in~\cite[Lem.~III.1]{BernardiRaugel1985}. 
	It can be adapted to $W^{1,1}(\Omega)$-functions with a Scott--Zhang-type construction, and  it can be directly extended to the  inhomogeneous case. 
	
	For a simplex~$T=[z_0,z_1,z_2,z_3]\in\tria_h$ the degrees of freedom of the extra element bubble space~$B(T)^3$, see~\eqref{def:local_element_bubblespace}, can be chosen as 
	\begin{align}\label{eq:dof_2ndBR}
		v\mapsto \int_T \varphi_{z_i} \divergence v \dx \qquad \text{for }i\in\{1,2,3\},
	\end{align}
	for~$v\in W^{1,1}(\Omega)^3$ with hat function $\varphi_z\in\mathcal{L}_1^1(\tria_h)$ for $z\in\vertices(\tria_h)$ defined in \eqref{def:hatfunc}. 
	This suffices, since $\int_{T} 1 \divergence v\dx = \int_{\partial T} v \cdot n \dsig$ is already determined by the normal face bubble degrees of freedom. 
	Then, analogously to the lowest-order case, the quasi-interpolation operator $I_h$ in Lemma~\ref{lem:SZinterp} with respect to the degrees of freedom in Remark~\ref{rmk:BR-global} and \eqref{eq:dof_2ndBR} is a Fortin operator satisfying all properties \ref{prop:lin-proj}--\ref{prop:local-stab-approx} with $k=2$ and $s=1$. 
\end{remark}

 \subsubsection{$P_d-P_0$ element}\label{subsec:Pd-P0}
 	 
\begin{remark}{($P_d-P_0$ element)}\label{rmk:PdP0}
The $P_d-P_0$ element for dimension $d \in \{2,3\}$ uses the discontinuous pressure space as in~\eqref{def:Qh} with $\mathcal{Q}=L^\infty(\Omega)$ and $Q(T) = \mathcal{P}_0(T)$; for $d = 2$ see~\cite[Sec.~8.4.3]{Boffi2013}. 
Its velocity space is $X_h = \mathcal{L}^1_d(\tria_h)^d$, i.e., it satisfies Assumption~\ref{ass:FEM} with $k = d$. 
Hence, the lowest-order Bernardi--Raugel element can be interpreted as  a reduced $P_d-P_0$ element. 
For this reason, the Fortin operator of the Bernardi--Raugel element in Lemma~\ref{lem:BR_Fortin-v}  is trivially a Fortin operator for the $P_d-P_0$ element, but this operator has reduced approximation order (for $k = 1$).  
Choosing~$I_h$ instead as the standard Scott--Zhang operator mapping to~$X_h = \mathcal{L}^1_d(\tria_h)^d$ and correcting with $C_h^0$ in Lemma~\ref{lem:Ch0} and $C_h^1 \equiv 0$, one obtains \ref{prop:lin-proj}--\ref{prop:local-stab-approx} with~$k = d$, cf. Remark~\ref{rmk:BR-Ch0}. 
				
Note that the local Fortin operator for the $P_2-P_0$ element for~$d=2$ introduced in~\cite[Sec.~8.4.3]{Boffi2013} for the homogeneous case uses full facet traces instead of normal traces. 
It fits without modification into the framework presented in Section~\ref{sec:main_results} and satisfies all properties. 
\end{remark}
		
\begin{remark}
In dimension~$d =2$ the $P_2-P_0$ element can be generalised to a~$P_{k}-P_{k-2}$ element for~$k\geq 2$, meaning that $X(T) = \mathcal{P}_{k}(T)^2$ and~$Q(T) = \mathcal{P}_{k-2}(T)$. 
This is an enriched version of the conforming Crouzeix--Raviart element of order $k-1$, see Remark~\ref{rmk:CR-global} below. 
For this reason, one may construct a Fortin operator using an approach similar to that in Section~\ref{subsec:confCR} below with minor modifications. 
One would have to replace~$I_h$ by the Scott--Zhang operator mapping to~$\mathcal{L}^1_{k}(\tria_h)^2$, and choose the operators~$C_h^0$ and~$C_h^1$ as in Proposition~\ref{prop:Fortin_CR}.    	
\end{remark}

\subsubsection{Conforming Crouzeix--Raviart element}\label{subsec:confCR} 
The conforming Crouzeix--Raviart element was introduced in~\cite{CrouzeixRaviart1973} for $d\in\{2,3\}$ in its lowest-order version $k = 2$. 
It was extended to arbitrary order $k\geq 2$ in~\cite{Mansfield1982}. 
See also~\cite[Sec.~5.4.1.4 for $d=2$; Sec.~5.4.1.5 for $d=3$]{Bernardi2024}.  

The velocity space $X_h$ is defined as in~\eqref{def:Xh} with 
\begin{align}
	X(T) = \mathcal{P}_k(T)^d + B_{k-2,d}(T)^d,
\end{align} 
where $B_{k-2,d}(T)\subseteq \mathcal{P}_{k+d-1}(T)$ is defined in \eqref{def:Bk-2d}. 
The pressure space $Q_h$ is given as in \eqref{def:Qh} with $\mathcal{Q}=L^\infty (\Omega)$ and $Q(T)= \mathcal{P}_{k-1}(T)$.

\begin{remark}\label{rmk:CR-global}
	The global spaces of the generalised conforming Crouzeix--Raviart element are 
	$$Q_h=\mathcal{L}_{k-1}^0(\tria_h)\qquad  \text{and} \qquad X_h = \mathcal{L}_k^1(\tria_h)^d + B_{k-2,d}(\tria_h)^d$$ 
	with $B_{k-2,d}(\tria_h) \subseteq \mathcal{L}_{k+d-1}^1(\tria_h)$ as in~\eqref{def:global_Bk-2d}. 
\end{remark}

In the original work~\cite[Sec.~4]{CrouzeixRaviart1973} the authors construct a local Fortin operator for~$d \in \{2,3\}$ and~$k = 2$, for functions in $W^{2,2}(\Omega)^d$. 
Subsequently, in~\cite[Sec.~3.3]{Girault2003} Girault and Scott construct a local Fortin operator in dimension~$d=2$ for velocity functions with zero trace in the lowest-order case $k=2$ and state that its construction extends to higher polynomial degree. 
The extension to $d \in \{2,3\}$ and $k \geq d$ was presented in~\cite[Sec.~5.4.1.4, 5.4.1.5]{Bernardi2024}. 
However, their operator is not constructed to be a projection onto $X_{h}$, but only a projection onto~$\mathcal{L}^1_k(\tria_h)^d$. 
As for the Bernardi--Raugel element, by modifying the construction in~\cite{CrouzeixRaviart1973} using a quasi-interpolation operator one may obtain a Fortin operator applicable also to functions with nonzero trace. 
	 
To avoid the difficulty of choosing the degrees of freedom for higher order, we proceed by the correction approach in~\eqref{eq:constr_inhom} to establish a Fortin operator for~$k\geq d$ and dimension~$d\geq 2$. 
This resembles the construction in~\cite{Girault2003}, but yields a projection onto~$X_h$. 
Then we show that this operator indeed satisfies properties~\ref{prop:lin-proj}--\ref{prop:local-stab-approx}. 

The operator~$C_h^0$ in~\eqref{eq:constr_inhom} is the same as the one used for the Bernardi--Raugel element, see Lemma~\ref{lem:Ch0}. 
Using this operator is possible, since for~$k \geq d$ we have~$B_n(\tria_h) \subset \mathcal{L}^{1}_{d}(\tria_h)^d \subset X_h$. 

Let us now specify the operator~$C_h^1 \colon W^{1,1}(\Omega)^d \to B_{k-2,d}(\tria_h)^d \subset X_{h,0}$, with~$B_{k-2,d}(\tria_h)$  as defined in~\eqref{def:global_Bk-2d}. 
Since~$B_{k-2,d}(\tria_h)$ has a basis of functions supported on a single simplex~$T \in \tria_h$ the construction is local (per simplex). 
Let $T\in\tria_h$ be a $d$-simplex, and $\Phi_T \colon \hat T \to T$ an affine transformation as in~\eqref{def:transform}. 
Let~$(\hat q_i)_{i\in \{1, \ldots, \delta\}}$ denote a fixed basis of $\mathcal{P}_{k-1}(\hat T)\cap L_0^2(\hat T)$, where 
	\begin{align}
		\delta \coloneqq \delta(k) \coloneqq \dim \mathcal{P}_{k-1}(\hat T) - 1 = \begin{pmatrix}
			k+d-1 \\ d\end{pmatrix} - 1
\end{align} 
Then, $( q_i)_{i\in \{1, \ldots, \delta\}}$, defined by $ q_i = \hat q_i \circ \Phi_T^{-1}$, forms a basis of~$\mathcal{P}_{k-1}(T)\cap L_0^2(T)$. 
The fact that $b_T \nabla q_i \in B_{k-2,d}(T)^d$  for $i \in \{1, \ldots, \delta\}$, with $B_{k-2,d}(T)^d$ as in~\eqref{def:Bk-2d}, motivates the following construction: 
We define $C_{h,T}^1\colon W^{1,1}(\Omega)^d \to \linearspan\{b_T\nabla q\colon  q\in \mathcal{P}_{k-1}(T)\cap L_0^2(T)\}$ determined by
\begin{alignat}{2}
	\int_T C_{h,T}^1\, v \cdot \nabla q_j \dx &= -\int_T  \divergence v ~q_j\dx \qquad &&\text{for all }j \in\{1,...,\delta \},
	\label{eq:constrCR}
\end{alignat} 
for $v \in W^{1,1}(\Omega)^d$.  
This equality is equivalent to determining the coefficient vector $\alpha \in \setR^\delta$ as the solution to the linear system of equations $M\alpha = F$, where
\begin{align}\label{eq:CR_lin-system} 
M_{i,j} \coloneqq \dashint_T b_T\, \nabla q_i\cdot\nabla q_j\dx, \qquad F_j\coloneqq -\dashint_T \divergence v~ q_j\dx,\quad i,j\in\{1,...,\delta\},
\end{align}
and setting 
\begin{align}\label{eq:constr_CR-lincomb}
	C_{h,T}^1 \,v\coloneqq \sum_{i=1}^{\delta} \alpha_i b_T \nabla q_i. 
\end{align}
Well-posedness of the operator $C_{h,T}^1$ follows from invertibility of $M$, which we shall establish below.  
Since~$C_{h,T}^1\, v \big|_{\partial T} = 0$ for each $T \in \tria_h$, we extend the functions by zero to $\Omega$, and define the global operator $C_h^1 \colon W^{1,1}(\Omega)^d \to B_{k-2,d}(\tria_h)^d \subset X_{h,0}$ as
 \begin{align}\label{def:CR_Ch1-global}
	C_h^1 v \coloneqq \sum_{T\in\tria_h} C_{h,T}^1 v \qquad \text{ for any } v \in W^{1,1}(\Omega)^d,
\end{align} 
with~$C_{h,T}^1$ as in~\eqref{eq:constrCR}.  
 By construction in~\eqref{eq:constrCR} the operator~$C_h^1$  satisfies
 \begin{align}
 	\int_T C_h^1 v \cdot \nabla q_h \dx = -\int_T  \divergence v ~q_h\dx  \label{eq:constrCR-global}
 \end{align}
 for any $T \in \tria_h$, and for any $q_h \in Q_h$ satisfying~$\dashint_{T} q_h \dx = 0$ on every simplex~$T \in \tria_h$.
   
The following lemma establishes well-posedness of $C_{h,T}^1$ as defined in~\eqref{eq:constrCR}. 
It is also used to prove local~$W^{1,1}(\Omega)$-stability of $C_h^1$ in the sequel. 
  
\begin{lemma}\label{lem:CR-matrix}
	For a simplex~$T\in\tria_h$ the corresponding matrix $M\in\R^{\delta \times \delta}$ defined in~\eqref{eq:CR_lin-system} is symmetric and positive definite. 
	Furthermore, there is a constant $C>0$ (depending only on the shape-regularity constant, $k$ and $d$) such that 
	\begin{align*}
	\norm{M^{-1}}_2 \leq C h_T^{2},
	\end{align*} 
	for any $T\in\tria_h$ and uniformly in $h$.
\end{lemma}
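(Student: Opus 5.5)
Symmetry of $M$ is immediate from its definition, since $M_{i,j}=\dashint_T b_T \nabla q_i\cdot\nabla q_j\dx$ is symmetric in $i,j$. For positive definiteness, take $\alpha\in\R^\delta$ and set $q\coloneqq\sum_i \alpha_i q_i\in\mathcal{P}_{k-1}(T)\cap L^2_0(T)$. Then
\begin{align*}
\alpha^\top M\alpha = \dashint_T b_T\abs{\nabla q}^2\dx \geq 0 .
\end{align*}
Since $b_T>0$ in the interior of $T$, this vanishes only if $\nabla q\equiv 0$ on $T$. In that case $q$ is constant, and the mean-zero condition forces $q=0$. Linear independence of $(q_i)_i$ then gives $\alpha=0$, so $M$ is symmetric positive definite.

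For the bound on $\norm{M^{-1}}_2$ I would pass to the reference simplex. By definition $q_i=\hat q_i\circ\Phi_T^{-1}$, and $b_T\circ\Phi_T=\hat b$ is the reference bubble, because barycentric coordinates are affine invariant. The scaling relations~\eqref{eq:scaling} then give
\begin{align*}
\alpha^\top M\alpha = \dashint_{\hat T}\hat b\,\abs{A^{-\top}\hat\nabla\hat q}^2\dxhat \geq \norm{A}_2^{-2}\dashint_{\hat T}\hat b\,\abs{\hat\nabla\hat q}^2\dxhat,
\end{align*}
where $\hat q=\sum_i\alpha_i\hat q_i$. This uses $\abs{A^{-\top}y}\geq \norm{A^\top}_2^{-1}\abs{y}=\norm{A}_2^{-1}\abs{y}$.

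The remaining integral $\alpha\mapsto\dashint_{\hat T}\hat b\abs{\hat\nabla\hat q}^2\dxhat$ is a positive definite quadratic form on $\R^\delta$; this is the same argument as above, now on $\hat T$. It is fixed, depending only on $k$, $d$ and the chosen basis $(\hat q_i)_i$. By finite-dimensionality it is therefore bounded below by $\hat c\abs{\alpha}^2$ for some $\hat c>0$, namely the smallest eigenvalue of the reference matrix. Combining this with $\norm{A}_2\eqsim h_T$, where shape regularity enters only through the constants in~\eqref{eq:scaling}, yields $\alpha^\top M\alpha\gtrsim h_T^{-2}\abs{\alpha}^2$. Hence the smallest eigenvalue of $M$ is at least $c\,h_T^{-2}$, and $\norm{M^{-1}}_2\leq C h_T^2$.

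The only delicate point is the change of variables. One has to make sure that the basis is transported from $\hat T$, so that the reference matrix is independent of $T$. Although $\Phi_T$ is not unique (vertex ordering), it acts on a fixed finite family of bases over $\hat T$, so the constant is independent of that choice. No other obstacle is expected.
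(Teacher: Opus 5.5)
Your proposal is correct and follows essentially the same route as the paper: positive definiteness comes from $\alpha^\top M\alpha=\dashint_T b_T\abs{\nabla q}^2\dx$ with $q$ mean-free, and the eigenvalue bound comes from transforming to the reference simplex, where $\lambda_{\min}(\hat M)$ depends only on $k$ and $d$. The only difference is that you use the one-sided estimate $\abs{A^{-\top}y}\geq\norm{A}_2^{-1}\abs{y}$ rather than the paper's two-sided scaling equivalence $\alpha^\top M\alpha\eqsim h_T^{-2}\alpha^\top\hat M\alpha$, and the one-sided bound is all that is needed.
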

\begin{proof}
	The matrix $M$ is symmetric by definition in~\eqref{eq:CR_lin-system}. 
	To show that it is positive definite let $\alpha\in\R^{\delta}\backslash\{0\}$ be arbitrary and set~$q_h\coloneqq \sum_{i=1}^{\delta} \alpha_i q_i \in \mathcal{P}_{k-1}(T)\cap L_0^2(T)$. 
	We note that~$\nabla q_h \not\equiv 0$, because~$q_h$ is not constant. 
	Then, we have 
	\begin{align}\label{eq:CR_M-pos-def}
		\alpha^\top M \alpha = \dashint_T b_T |\nabla q_h|^2 \dx > 0\qquad \text{for any } \alpha\neq 0,
	\end{align}
	and hence $M$ is positive definite. 
	Denoting by~$\lambda_{\min}>0$ the smallest eigenvalue of $M$, the spectral norm of $M^{-1}$ is $\norm{M^{-1}}_2 = \frac{1}{\lambda_{\min}}$. 
	Thus, it suffices to show that 
	\begin{align*}
		\alpha^\top M \alpha \gtrsim h_T^{-2} \abs{\alpha}^2\qquad \text{for all }\alpha\in\R^{\delta}\backslash\{0\}.
	\end{align*} 
	We consider the rescaled matrix $\hat M \in \mathbb{R}^{\delta \times \delta}$ on the reference simplex $\hat T$ with the entries 
		\begin{align*}
			\hat M_{i,j} \coloneqq \dashint_{\hat T} \hat b_{\hat{T}}\, \hat \nabla \hat{q}_i\cdot \hat \nabla \hat{q}_j\, \mathrm{d}\hat{x} \qquad \text{ for } i,j \in \{1, \ldots, \delta\},
	\end{align*}  
	with rescaled element bubble $\hat{b}_{\hat{T}}$ and pressure functions $(\hat{q}_i)_{i \in \{1, \ldots, \delta\}}$  and $\hat{q}_h$. 
	Then, by scaling arguments, see~\eqref{eq:scaling}, we have 
	\begin{align*}
		\alpha^\top M \alpha 
		= 
		\dashint_T b_T |\nabla q_h|^2 \dx  
		\eqsim 
		h_T^{-2} \dashint_{\hat{T}} \hat{b}_{\hat{T}} |\hat \nabla \hat{q}_h|^2 \,\mathrm{d}\hat{x}
		 = 
		 h_T^{-2}	\alpha^\top \hat M \alpha, 
	\end{align*}
	with constants depending on the shape-regularity constant, on $k$ and $d$.  
	Since $\hat{M}$ is symmetric and positive definite with the same arguments as $M$, and its smallest eigenvalue  depends only on $k,d$, this proves the claim. 
\end{proof}

Now we are in the position to show well-posedness and local stability of the operator~$C_h^1$. 

\begin{lemma}[properties of~$C_h^1$]\label{lem:Ch1_CR}
	The linear operator~$C_h^1\colon W^{1,1}(\Omega)^d\to B_{k-2,d}(\tria_h)^d \subset X_{h,0}$ as defined in~\eqref{def:CR_Ch1-global} is well-defined. 
	It satisfies  
	\begin{align}
		\int_T C_h^1 v \cdot \nabla q_h \dx = -\int_T  \divergence v ~q_h\dx  \label{eq:constrCR-global-v}
	\end{align}
	for any $T \in \tria_h$ and for any function $q_h \in Q_h$ satisfying $\dashint_{T} q_h \dx = 0$ on every simplex~$T \in \tria_h$.
	
	Furthermore, it is locally~$W^{1,1}$-stable in the sense that  
	\begin{align}\label{est:W11-CR}
		\norm{C_h^1 v}_{L^1(T)} \lesssim	h_T\norm{\nabla v}_{L^1(T)} \qquad\text{for all }v\in W^{1,1}(\Omega)^d,~\text{for all }T\in\tria_h,
	\end{align}	 
	 uniformly in $h$.
\end{lemma}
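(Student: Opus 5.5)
Well-definedness follows from Lemma~\ref{lem:CR-matrix}: on each $T\in\tria_h$ the matrix $M$ in~\eqref{eq:CR_lin-system} is symmetric positive definite, so the coefficient vector $\alpha=M^{-1}F$ is uniquely determined and depends linearly on $v$. Each local contribution $C_{h,T}^1 v=\sum_i\alpha_i b_T\nabla q_i$ lies in $B_{k-2,d}(T)^d$, because $\nabla q_i\in\mathcal{P}_{k-2}(T)^d$ and $b_T\lambda^\alpha$ with $\abs{\alpha}=k-2$ spans $b_T\mathcal{P}_{k-2}(T)$. Since it vanishes on $\partial T$, its extension by zero is continuous. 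Summing over $T$ as in~\eqref{def:CR_Ch1-global} therefore gives a linear map into $B_{k-2,d}(\tria_h)^d\subset X_{h,0}$.

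For the identity~\eqref{eq:constrCR-global-v}, fix $T$ and a $q_h\in Q_h$ with $\dashint_T q_h\dx=0$. Then $q_h|_T\in\mathcal{P}_{k-1}(T)\cap L^2_0(T)=\linearspan\{q_j\}$, and on $T$ only $C_{h,T}^1 v$ is nonzero, since the other local contributions are supported on other simplices. The identity therefore follows from~\eqref{eq:constrCR} by linearity in $q_j$.

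The main step is the stability estimate~\eqref{est:W11-CR}.
\begin{itemize}
\item \emph{Bound on $C_h^1 v$.} Using $\abs{b_T}\le 1$ and $\norm{\nabla q_i}_{L^\infty(T)}\lesssim h_T^{-1}$ (scaling~\eqref{eq:scaling} with the fixed reference basis $\hat q_i$), we get
\begin{align*}
\dashint_T \abs{C_h^1 v}\dx \le \sum_{i=1}^{\delta}\abs{\alpha_i}\,\norm{\nabla q_i}_{L^\infty(T)} \lesssim h_T^{-1}\abs{\alpha}.
\end{align*}
\item \emph{Bound on $\alpha$.} By Lemma~\ref{lem:CR-matrix}, $\abs{\alpha}\le\norm{M^{-1}}_2\abs{F}\lesssim h_T^2\abs{F}$.
\item \emph{Bound on $F$.} Since $\norm{q_j}_{L^\infty(T)}\lesssim 1$ by scaling, $\abs{F_j}\le \dashint_T\abs{\divergence v}\abs{q_j}\dx\lesssim \dashint_T\abs{\nabla v}\dx$.
\end{itemize}
Combining these gives $\dashint_T\abs{C_h^1 v}\dx\lesssim h_T\dashint_T\abs{\nabla v}\dx$. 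Multiplying by $\abs{T}$ yields~\eqref{est:W11-CR}, and the estimate is confined to $T$ itself, i.e.\ $s(1)=0$.

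The constants depend only on $k$, $d$ and the shape-regularity constant, through the reference basis and Lemma~\ref{lem:CR-matrix}. The only delicate point is tracking the powers of $h_T$ correctly: the factor $h_T^2$ from $M^{-1}$ must cancel one $h_T^{-1}$ from $\nabla q_i$ and leave the single $h_T$ in front of the gradient. Because $F$ involves only $\divergence v$, the estimate needs no $L^1$-term of $v$ at all.
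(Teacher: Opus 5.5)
Your proposal is correct and follows essentially the same route as the paper. Well-definedness comes from the positive definiteness of $M$ in Lemma~\ref{lem:CR-matrix}, the identity follows directly from the local construction~\eqref{eq:constrCR}, and the stability estimate uses the chain $\dashint_T\abs{C_h^1 v}\dx\lesssim h_T^{-1}\abs{\alpha}\lesssim h_T^{-1}\norm{M^{-1}}_2\abs{F}\lesssim h_T\dashint_T\abs{\nabla v}\dx$ together with the scaling of $q_i$ and $\nabla q_i$. The differences are cosmetic: you bound $b_T$ pointwise by $1$ instead of using $\dashint_T b_T\dx\lesssim 1$, and you spell out that $b_T\nabla q_i\in B_{k-2,d}(T)^d$ and that the contributions from other simplices do not act on $T$.
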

\begin{proof}
	By Lemma~\ref{lem:CR-matrix} the matrix~$M$ as in~\eqref{eq:CR_lin-system} is invertible, and hence for each $T \in \tria_h$ the operator  $C_{h,T}^1 \colon W^{1,1}(\Omega)^d \to \linearspan\{b_T\nabla q_h\colon  q_h\in \mathcal{P}_{k-1}(T)\cap L_0^2(T)\}$  satisfying~\eqref{eq:constrCR} is well-defined. 
	The property~\eqref{eq:constrCR-global-v} follows directly by construction, see~\eqref{eq:constrCR-global} and \eqref{eq:constrCR}. 
	 	 
	To show the local stability in~\eqref{est:W11-CR}, let an arbitrary simplex~$T \in \tria_h$ be fixed. 
	For the basis $(q_i)_{i \in \{1, \ldots, \delta\}}$ of $\mathcal{P}_{k-1}(T)\cap L_0^2(T)$ we use the
	scaling properties 
	\begin{align}\label{est:scaling-1}
		\norm{ q_i}_{L^{\infty}(T)}  + 	h_T \norm{\nabla q_i}_{L^{\infty}(T)} + \dashint_{T} b_T \dx \lesssim 1,
	\end{align}
	for any $i \in \{1, \ldots, \delta\}$ with a constant independent of $T$.   
	Applying this to~$C_h^1 v\big|_T = C_{h,T}^1\, v$ as in~\eqref{eq:constr_CR-lincomb} in combination with the shape regularity and the equivalence of norms on~$\setR^\delta$ we obtain 
	\begin{align*}
		\dashint_T \abs{C_{h,T}^1 v}\dx
		\leq \sum_{i=1}^{\delta} \abs{\alpha_i} \norm{\nabla q_i}_{L^\infty(T)}  \dashint_T b_T\dx 
		\lesssim h_T^{-1} \abs{\alpha}, 
	\end{align*}
with constants independent of $T$.  
Using~$M \alpha = F$, with $M$ and $F$ as in \eqref{eq:CR_lin-system}, the estimate on $\norm{M^{-1}}_2$ in Lemma~\ref{lem:CR-matrix}, the scaling estimate~\eqref{est:scaling-1} and equivalence of norms on~$\setR^\delta$  we obtain     
\begin{align*}
	\dashint_T \abs{C_{h,T}^1 v}\dx
	&\lesssim h_T^{-1} \abs{\alpha}
 \lesssim h_T^{-1} \norm{M^{-1}}_{2} \abs{F} \lesssim h_T \max_{j \in \{1,\ldots, \delta\}} \abs{F_j}
 	\\
 	&\lesssim h_T \dashint_{T}\abs{\divergence v} \dx  \max_{j\in\{1,\dots, \delta\}}\norm{q_j}_{L^{\infty}(T)}
 	 \lesssim h_T \dashint_{T}\abs{\nabla v} \dx. 
 \end{align*}		

The hidden constant depends only on the shape-regularity constant,~$d$ and on~$k$.  
\end{proof}

As before, we use Proposition~\ref{prop:Chj_operators} to verify the properties of the Fortin operator for the Crouzeix--Raviart element, as introduced in Remark~\ref{rmk:CR-global}.

\begin{proposition}[conforming Crouzeix--Raviart]\label{prop:Fortin_CR}
	For the conforming Crouzeix--Raviart element for~$d \in \{2,3\}$ and~$k \geq d$  the operator~$\Pi_h\colon W^{1,1}(\Omega)^d \to X_h$ defined by~\eqref{eq:constr_inhom} with~$I_h$ as in Lemma~\ref{lem:SZinterp},  with $C_h^0$ as in Lemma~\ref{lem:Ch0}, and with~$C_h^1$ as in Lemma~\ref{lem:Ch1_CR} is a Fortin operator satisfying the properties \ref{prop:lin-proj}--\ref{prop:local-stab-approx} with~$s = 1$. 
	By suitable choice of $I_h$ also~\ref{prop:trace-pres}$_{\Gamma}$ can be satisfied, see Remark~\ref{rmk:properties_Fortin}~\ref{itm:rmk_part-bd}.	
\end{proposition}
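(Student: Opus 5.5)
The plan is to apply Proposition~\ref{prop:Chj_operators}: I will check its hypotheses \ref{itm:lemma_div-pres}--\ref{itm:lemma_trace-int} for $C_h^0$ from Lemma~\ref{lem:Ch0} and $C_h^1$ from Lemma~\ref{lem:Ch1_CR}, with $s(0)=s(1)=0$. Since $k\ge d$, we have $B_n(\tria_h)\subset\mathcal{L}^1_d(\tria_h)^d\subset X_h$, so $C_h^0$ maps into $X_h$. By Remark~\ref{rmk:Ch0}, $C_h^0$ preserves zero traces and zero normal traces, satisfies \eqref{itm:Ch0-trace-int} (this is \eqref{eq:constrBernardi-Raugel} restricted to boundary facets), and obeys \eqref{eq:Chj-stab} with $s(0)=0$ by \eqref{eq:BR_Chdiv-stab}. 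The operator $C_h^1$ maps into $B_{k-2,d}(\tria_h)^d\subset X_{h,0}$, and \eqref{est:W11-CR} gives \eqref{eq:Chj-stab} with $s(1)=0$. Proposition~\ref{prop:Chj_operators} then yields $s=1$.

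The substantive step is the divergence preservation \ref{itm:lemma_div-pres} for $C_h=C_h^0+C_h^1(I-C_h^0)$. Set $w\coloneqq v-C_h^0v$. Then $v-C_hv=w-C_h^1w$. Fix $q_h\in\overline{Q}_h$. On each $T$ I split $q_h|_T=\langle q_h\rangle_T+\tilde q_T$ with $\tilde q_T\in\mathcal{P}_{k-1}(T)\cap L^2_0(T)$, and treat the two parts separately.

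For the constant part, integration by parts on $T$ together with \eqref{eq:constrBernardi-Raugel} on all facets of $T$ gives $\int_T\divergence w\dx=\int_{\partial T}w\cdot n\dsig=0$. Since $C_h^1w|_T$ vanishes on $\partial T$, also $\int_T\divergence C_h^1w\dx=0$.

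For the mean-free part, I integrate $C_h^1 w$ by parts using its vanishing trace on $\partial T$ and then apply \eqref{eq:constrCR-global-v}:
\[
\int_T\divergence(C_h^1w)\,\tilde q_T\dx=-\int_T C_h^1w\cdot\nabla\tilde q_T\dx=\int_T\divergence w\,\tilde q_T\dx .
\]
Hence $\int_T\divergence(w-C_h^1w)\,q_h\dx=0$ on every $T$. Summing over $T$ gives \ref{itm:lemma_div-pres}, in fact for all $q_h\in Q_h$.

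Some care is needed because \eqref{eq:constrCR-global-v} is formulated for global $q_h$ with zero mean on every simplex. Since the pressures are discontinuous, the elementwise $\tilde q_T$, extended by zero, is such a function, so this is harmless.

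Finally, for \ref{prop:trace-pres}$_\Gamma$ I would note that $C_h^0$ is defined facet by facet and $C_h^1$ does not affect traces. Choosing $I_h$ as in Remark~\ref{rmk:SZ-discr-traces}~\ref{itm:SZ-part-bd}, the last statement of Proposition~\ref{prop:Chj_operators} then applies. I expect no real obstacle; the step needing the most attention is the bookkeeping in the divergence splitting.
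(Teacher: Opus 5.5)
Your proposal is correct and follows essentially the paper's route. You verify the hypotheses of Proposition~\ref{prop:Chj_operators} with $s(0)=s(1)=0$, and you prove divergence preservation elementwise by splitting $q_h|_T$ into its mean, handled by \eqref{eq:constrBernardi-Raugel} and the vanishing of $C_h^1 w$ on $\partial T$, and a mean-free part, handled by \eqref{eq:constrCR-global-v} after integrating by parts. The only difference is cosmetic: the paper first derives the mean-free identity and then reduces the full integral to $\langle q_h\rangle_T\int_{\partial T}(I-C_h^0)(w)\cdot n$, whereas you treat the two parts in parallel.
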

\begin{proof}
	We verify the assumptions of Proposition~\ref{prop:Chj_operators}.
	Recall that the operator~$C_h^0$, see Lemma~\ref{lem:Ch0}, maps to $B_n(\tria_h)$ and for~$k \geq d$ one has~$B_n(\tria_h) \subset \mathcal{L}^{1}_{d}(\tria_h)^d \subset X_h$. 
	By Lemma~\ref{lem:Ch0} and by Lemma~\ref{lem:Ch1_CR} on the properties of $C^1_h$, it remains only to 
	verify the divergence-preservation property in Proposition~\ref{prop:Chj_operators}~\ref{itm:lemma_div-pres}, see  Remark~\ref{rmk:Ch0}. 
	
Let $q_h \in \overline{Q}_h$ be arbitrary but fixed. 
We decompose $q_h$ into a piecewise constant function with values $\langle q_h\rangle_T$ on $T \in \tria_h$, and the locally mean-free function  
\begin{align}\label{def:qhtilde}
\tilde{q}_h\coloneqq q_h - \sum_{T\in\tria_h}  \mathbb{1}_T \langle q_h\rangle_T,
\end{align} 
i.e., $\tilde{q}_h\big|_T\in\mathcal{P}_{k-1}(T) \cap L^2_0(T)$. 
By property~\eqref{eq:constrCR-global-v} of $C_h^1$ in Lemma~\ref{lem:Ch1_CR} for locally mean-free pressure functions and  integrating by parts we have for any $w \in W^{1,1}(\Omega)^d$ and any $T \in \tria_h$ that  
\begin{equation}\label{eq:Fortin-CR-div-1}
\begin{aligned}
0 & = \int_T \divergence((I-C_h^0)(w)) \tilde{q}_h \dx + \int_T C_h^1(I-C_h^0)(w) \cdot  \nabla \tilde{q}_h \dx \\
& = \int_T \divergence((I-C_h^1)(I-C_h^0)(w)) \tilde{q}_h \dx,
\end{aligned}
\end{equation}   
where the boundary terms vanish thanks to $C_h^1(v)\big|_{\partial T} =0$ for any $v \in W^{1,1}(\Omega)^d$. 
Using the definition of~$C_h= C_h^0 + C_h^1(I-C_h^0)$, the decomposition of $q_h|_{T} = \tilde q_h|_{T} + \langle q_h \rangle_{T}$ as in~\eqref{def:qhtilde}, equation~\eqref{eq:Fortin-CR-div-1}, the fact that~$C_h^1(v)\big|_{\partial T} =0$ for any $v \in W^{1,1}(\Omega)^d$ and the property of $C_h^0$ in~\eqref{eq:constrBernardi-Raugel} we find that 
 	\begin{align*}
 	\int_{T} \divergence((I-C_h)(w)) q_h \dx &= \int_T  \divergence((I-C_h^1)(I-C_h^0)(w)) q_h \dx \\
 	&= \int_T \divergence((I-C_h^1)(I-C_h^0)(w)) \tilde{q}_h \dx \\
 	&\qquad +  \langle q_h \rangle_T \int_T \divergence((I-C_h^1)(I-C_h^0)(w))  \dx \\
 	&=
  \langle q_h \rangle_T  \int_{\partial T} (I-C_h^0)(w)\cdot n \dsig = 0,
 \end{align*}
 for any $T \in \tria_h$ and any $w \in W^{1,1}(\Omega)^d$.  
 Summing over $T \in \tria_h$ shows that Proposition~\ref{prop:Chj_operators}~\ref{itm:lemma_div-pres} holds and hence completes the proof. 
 \end{proof}

\begin{remark}\label{rmk:confCRd3k2}
	The case $d = 3$ and $k = 2$ is not covered by Proposition~\ref{prop:Fortin_CR}. 
	An alternative approach is presented in~\cite[Sec.~III]{BernardiRaugel1985}, see also~\cite[Sec.~5.4.1.5]{Bernardi2024}. 
	It consists of enriching the velocity space with the facet bubble functions~$B_n(T)$, as defined in~\eqref{def:local_normal_face_bubblespace}, which are used in the lowest-order Bernardi--Raugel element. The local velocity space is  
	\begin{align*}
	X(T)=\mathcal{P}_2(T)^3 + B_{0,d}(T)^3 + B_n(T).
	\end{align*}
	Recall that~$B_{0,d}(T) = B(T)$, as defined in~\eqref{def:local_element_bubblespace}.   
	Then, the global spaces are 
	\begin{align*}
		Q_h = \mathcal{L}^0_1(\tria_h)
		\qquad \text{ and } \qquad 
		X_h = \mathcal{L}^1_2(\tria_h)^3 + B(\tria_h)^3 + B_n(\tria_h),
	\end{align*}
	see~\eqref{def:global_el_bubblespace} and~\eqref{def:global_normal_face_bubblespace} for the global bubble spaces.  
	Then, the Fortin operator in~\eqref{eq:constr_inhom} is based on~$I_h$ mapping to~$X_h$, see Lemma~\ref{lem:SZinterp}, $C_h^0$ as in Lemma~\ref{lem:Ch0} and~$C_h^1$ as in Lemma~\ref{lem:Ch1_CR} for~$k = 2$.  
	It satisfies properties~\ref{prop:lin-proj}--\ref{prop:local-stab-approx} with~$s = 1$ and can be constructed to satisfy also~\ref{prop:trace-pres}$_{\Gamma}$.   
\end{remark}

\subsubsection{Guzmán--Neilan element}\label{subsec:Guzman-Neilan}
	The Guzmán--Neilan element introduced for $d=2$ in \cite{Guzman2014a} and for $d=3$ in \cite{Guzman2014b} is a conforming finite element pair with discontinuous pressure functions that uses rational functions for the velocity space and has exact divergence constraints.
	Its construction is based on local modifications of $H(\divergence;\Omega)$-conforming finite elements that enforce tangential continuity, and hence $H^1(\Omega)$-conformity. 
	We denote  
	\begin{align*}
		H(\divergence;\Omega)\coloneqq \{v\in L^2(\Omega)^d\colon \divergence v\in L^2(\Omega)\},\quad H_0(\divergence;\Omega)\coloneqq\{v\in H(\divergence;\Omega)\colon v\cdot n=0 \text{ on }\partial\Omega\}.
	\end{align*}
	As for the previously mentioned elements, the construction of the local Fortin operator for the homogeneous case in~\cite{Guzman2014a} uses a correction of an interpolation operator as in~\eqref{eq:constr_inhom}. 
To highlight the connection, let us briefly review the construction of~\cite{Guzman2014a} in the special case of lowest order $k=1$ in dimension $d=2$ for functions in $H^1_0(\Omega)^2$. 
 The Fortin operator is of the form 
	\begin{align*}
		(I-\Pi_h) = (I-C_{R})(I-C_{M})(I-I_h),
	\end{align*}
	where $I_h$ is the Scott--Zhang operator mapping to $\mathcal{L}^1_1(\tria_h)^2$, see Proposition~\ref{prop:SZinterp}. 
	The operator $C_{M}\colon H_0^1(\Omega)^2\to M_h\subseteq H_0(\divergence;\Omega)$ maps to the space of curls of certain polynomial edge bubble functions, see~\cite[eq.~(3.15)]{Guzman2014a}.	
	This differs from our framework since $M_h$ is not $H^1(\Omega)$-conforming. 
	Then, the operator~$C_{{R}}\colon  M_h + H_0^1(\Omega)^2  \to {R}_h$ corrects the non-conformity on the interior edges. 
	It maps to a space $R_h$ consisting of curls of certain rational bubble functions, see~\cite[eq.~(3.16)]{Guzman2014a}.
	
	This definition of the Fortin operator $\Pi_h$ extends to $H^1(\Omega)$ without modification.  
	The trace-preservation property~\ref{prop:trace-pres} of $\Pi_h$ follows from using boundary degrees of freedom for all operators, since $C_{M}$ is based on the mean of the normal traces on edges, and $C_R$ uses the mean of tangential traces on edges as degrees of freedom, see~\cite[eq.~(3.17), (3.18)]{Guzman2014a}.  	
In fact, it can be shown that the Fortin  operator~$\Pi_h\colon H^1(\Omega)^2 \to  \mathcal{L}^1_1(\tria_h)^2 + M_h + R_h$ satisfies all properties \ref{prop:lin-proj}--\ref{prop:local-stab-approx}, and~\ref{prop:trace-pres}$_{\Gamma}$ can also be established by suitable choice of $I_h$.  
The local approximation property in~\ref{prop:local-stab-approx} for $p = 2$ has been established in~\cite[Lem.~3.3]{Guzman2014a}.

\subsection{Elements with continuous pressure space}\label{subsec:cont}
Now we consider mixed finite element pairs with $\mathcal{Q}=C(\overline{\Omega})$ in the definition of the pressure space~$Q_h$ in~\eqref{def:Qh}. 

\subsubsection{Modified MINI element}\label{subsec:modMINI}
The MINI element was first introduced for~$d=2$ for lowest order $k = 1$  in~\cite{ArnoldBrezziFortin1984}, 
and has generalisations to~$d\in\{2,3\}$ and~$k \geq 1$, see~\cite[Sec.~8.4.2, 8.7.1]{Boffi2013}. 

The main reason to use it is that the lowest-order version has fewer degrees of freedom than the previously mentioned elements and the Taylor--Hood element, except for the lowest-order Bernardi--Raugel element in dimension~$d  = 3$.  
Note, however, that in dimension~$d = 3$  the reduced Taylor--Hood element in~\cite{DieningStornTscherpel2022} has  significantly fewer degrees of freedom than the MINI element, see Tables~1 and~2 therein for a comparison.  
Here, we shall focus on the lowest-order MINI element in dimensions $d \in \{2,3\}$  and we address the higher-order cases in Remark~\ref{rmk:MINI-highorder} below. 

The pressure space $Q_h$  in~\eqref{def:Qh} is continuous with~$\mathcal{Q}=C(\overline{\Omega})$ and $Q(T) = \mathcal{P}_1(T)$. 
In the homogeneous setting the velocity finite element space~$X_h$ is given by \eqref{def:Xh} with local space~$X(T)= \mathcal{P}_1(T)^d + B(T)^d$, where $B(T)$ denotes the local element bubble space defined in~\eqref{def:local_element_bubblespace}. 
The standard construction of the Fortin operator in the homogeneous case is based on element bubble functions, see~\cite[Sec.~3]{GiraultLions2001} and \cite[A.1]{BelenkiBerselliDieningEtAl2012}. 
Due to the continuity of the pressure functions one may integrate by parts in the divergence preservation~\ref{prop:div-pres}. 
The correction operator is therefore based on this version. 

However, without zero traces the corresponding boundary terms do not vanish, and an enrichment with facet bubble functions~$B_n(T)$, as defined in~\eqref{def:local_normal_face_bubblespace}, is instrumental. 
Hence, we consider the modified element with 
	\begin{align}\label{def:modMINI}
	X(T)=\mathcal{P}_1(T)^d + B(T)^d + B_n(T) \qquad\text{for any } T\in \tria_h.
\end{align}

\begin{remark}\label{rmk:mod-MINI-1}
	The global modified MINI element spaces are
	$$Q_h=\mathcal{L}_1^1(\tria_h)
	\qquad \text{and}
	\qquad X_h=\mathcal{L}_1^1(\tria_h)^d + B(\tria_h)^d + B_n(\tria_h),$$ with bubble spaces $B(\tria_h)$ and $B_n(\tria_h)$ defined in~\eqref{def:global_el_bubblespace} and \eqref{def:global_normal_face_bubblespace}, respectively.
 Assumption~\ref{ass:FEM} is satisfied with $k = 1$.  
 
 Note the similarity with the modification of the lowest-order Crouzeix--Raviart element for~$d = 3$ and~$k= 2$  in Remark~\ref{rmk:confCRd3k2}.
\end{remark}

The Fortin operator~$\Pi_h$ is constructed as in~\eqref{eq:constr_inhom} 
with $C_h^0$ in Lemma~\ref{lem:Ch0} from the Bernardi--Raugel element, which works since $B_n(\tria_h) \subset X_h$. We choose~$C_h^1$ as the Crouzeix--Raviart operator in Lemma~\ref{lem:Ch1_CR} for $k = 2$. 
The latter is possible, since one has $B_{0,d}(T) = B(T)$, see~\eqref{def:Bk-2d}.  
	
\begin{proposition}[modified MINI element]\label{lem:Fortin_MINI}
	For the modified MINI element  (Remark~\ref{rmk:mod-MINI-1}) the operator~$\Pi_h\colon W^{1,1}(\Omega)^d \to X_h$ as in \eqref{eq:constr_inhom} with~$I_h$ as in Lemma~\ref{lem:SZinterp} mapping to~$X_h$, 
	$C_h^0$ as in Lemma~\ref{lem:Ch0}, and $C_h^1$ as in Lemma~\ref{lem:Ch1_CR} for $k = 2$ satisfies  properties~\ref{prop:lin-proj}--\ref{prop:local-stab-approx} with~$s =  k = 1$.	
	By suitable choice of~$I_h$ also~\ref{prop:trace-pres}$_{\Gamma}$ can be satisfied, see Remark~\ref{rmk:properties_Fortin}~\ref{itm:rmk_part-bd}.
\end{proposition}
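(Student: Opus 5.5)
The plan is to apply Proposition~\ref{prop:Chj_operators}, in the same way as in the proof of Proposition~\ref{prop:Fortin_CR}.

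First, the admissibility of the ingredients. Since $B_n(\tria_h)\subset X_h$ by \eqref{def:modMINI}, Remark~\ref{rmk:Ch0} shows that $C_h^0$ from Lemma~\ref{lem:Ch0} is admissible. It satisfies \ref{itm:lemma_stab_approx} with $s(0)=0$ and \ref{itm:lemma_trace-int}. For $k=2$ the operator $C_h^1$ of Lemma~\ref{lem:Ch1_CR} maps into $B_{0,d}(\tria_h)^d=B(\tria_h)^d\subset X_{h,0}$. Lemma~\ref{lem:Ch1_CR} gives its local stability \eqref{est:W11-CR} with $s(1)=0$, so the resulting locality parameter is $s=s(0)+s(1)+1=1$. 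The interpolation operator $I_h$ maps into $X_h\supset\mathcal{L}^1_1(\tria_h)^d$ by Lemma~\ref{lem:SZinterp}. Hence Assumption~\ref{ass:FEM} holds with $k=1$, and the approximation orders are those for $k=1$.

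Second, divergence preservation, i.e.\ \ref{itm:lemma_div-pres}. Fix $q_h\in\overline{Q}_h\subset\mathcal{L}^1_1(\tria_h)$ and $w\in W^{1,1}(\Omega)^d$, and set $u\coloneqq(I-C_h^0)w$. Since $(I-C_h)w=(I-C_h^1)u$, I would work elementwise as in Proposition~\ref{prop:Fortin_CR}. Split $q_h|_T=\tilde q_h|_T+\langle q_h\rangle_T$, where $\tilde q_h|_T\in\mathcal{P}_1(T)\cap L^2_0(T)=\mathcal{P}_{k-1}(T)\cap L^2_0(T)$ for $k=2$.

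For the mean-free part, integrate by parts on $T$ and use $C_h^1u|_{\partial T}=0$ together with \eqref{eq:constrCR-global-v}. This gives $\int_T\divergence((I-C_h^1)u)\,\tilde q_h\dx=0$. For the constant part, note that $C_h^1 u$ vanishes on $\partial T$, so that $\langle q_h\rangle_T\int_T\divergence((I-C_h^1)u)\dx=\langle q_h\rangle_T\int_{\partial T}u\cdot n\dsig$. This is zero by \eqref{eq:constrBernardi-Raugel}, applied facet by facet. Summing over $T$ yields \ref{itm:lemma_div-pres}.

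This argument works directly with the discontinuous splitting and never needs the continuity of $q_h$. Continuity would only matter if one wanted global integration by parts. The boundary terms that would otherwise appear are exactly what $C_h^0$ annihilates.

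Third, Proposition~\ref{prop:Chj_operators} then gives \ref{prop:lin-proj}--\ref{prop:local-stab-approx} with $s=1$. For \ref{prop:trace-pres}$_\Gamma$, note that $C_h^0$ is local on facets, so choosing $I_h$ as in Remark~\ref{rmk:SZ-discr-traces}~\ref{itm:SZ-part-bd} suffices.

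The only real obstacle is confirming that the Crouzeix--Raviart correction for $k=2$ fits into the MINI space. This requires $b_T\nabla q\in B(T)^d$ for $q\in\mathcal{P}_1(T)$, which holds because $\nabla q$ is constant. It also requires that Lemma~\ref{lem:CR-matrix} holds for $k=2$ with $\delta=d$; its proof is independent of $k$. Both points are immediate.
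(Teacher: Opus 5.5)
Your proposal is correct and follows essentially the same route as the paper: you verify the hypotheses of Proposition~\ref{prop:Chj_operators} via Lemmas~\ref{lem:Ch0} and~\ref{lem:Ch1_CR}, with $s(0)=s(1)=0$. You then obtain divergence preservation by rerunning the elementwise Crouzeix--Raviart argument, which the paper also does by citing the proof of Proposition~\ref{prop:Fortin_CR} together with $\mathcal{L}^1_1(\tria_h)\subset\mathcal{L}^0_1(\tria_h)$; you just write that argument out explicitly.
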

\begin{proof}
Due to the enrichment, both operators~$C_h^0$ and~$C_h^1$ map to~$X_h$, see Remark~\ref{rmk:mod-MINI-1}.  
Verifying the assumptions of Proposition~\ref{prop:Chj_operators} relies on Lemma~\ref{lem:Ch0} and on Lemma~\ref{lem:Ch1_CR}. 
Finally, the divergence preservation follows from the proof of~Proposition~\ref{prop:Fortin_CR} in combination with the fact that the pressure space is contained in the lowest-order pressure space of the conforming Crouzeix--Raviart element, as $\mathcal{L}^1_1(\tria_h) \subset \mathcal{L}^0_1(\tria_h)$. 
\end{proof}

Due to the continuity of the pressure functions, the interior facet bubble functions can be avoided. 
To ensure the trace-preservation properties this comes at the price of using higher-order boundary facet bubble functions.

\begin{remark}[alternative modified MINI element]\label{rmk:MINI-alternative}

We present a modification of the original MINI element, where only boundary facet bubbles are added. 
The construction of the Fortin operator for the MINI element relies on integration by parts due to the continuity of the pressure functions. 
For functions $v \in W^{1,1}(\Omega)^d$ without trace conditions~\ref{prop:div-pres} is equivalent to 
	\begin{align}
0 = 	\int_\Omega \divergence (v-\Pi_h v) q_h \dx &=-\int_\Omega (v-\Pi_h v) \nabla q_h \dx + \int_{\partial\Omega} (v-\Pi_h v) \cdot n ~q_h \dsig \label{eq:div-pres_IBP_cont}
\end{align}
for any $q_h\in\overline{Q}_h$. 
To ensure this, we use a correction operator~$C_h^1$ based on element bubble functions to make the first term vanish, and we construct $C_h^0$ based on boundary facet bubble functions to make the trace term vanish. 
Note that since $q_h\big|_{T} \in \mathcal{P}_1(T)$, it does not suffice to use $C_h^0$ from Lemma~\ref{lem:Ch0}. 

This leads to the following modification of the velocity space of the MINI element
     \begin{align*}
     	X_h = \mathcal{L}_1^1(\tria_h)^d + B(\tria_h)^d + \BFp(\tria_h),
     \end{align*}
  with facet bubble function space $\BFp(\tria_h)$ defined in~\eqref{def:global_face_bubblespace_d+1}.

Here, for each boundary facet $f \in \facets^\partial{(\tria_h)}$ let $b_f^{(i)}$ for $i\in\{0,...,d-1\}$ be the associated boundary facet bubble functions with~$\support(b_f^{(i)}) \subset \omega_h(f)$, defined in~\eqref{def:face_bubble_higherdeg}. 
We construct the local operators~$C_{h,f}^0 \colon W^{1,1}(\Omega)^d \to \linearspan\{b_f^{(i)} n_f \colon i \in \{0, \ldots, d-1\} \}$ to satisfy 
\begin{align}
	\int_f (C_{h,f}^0v) \cdot n_f~q_h\dsig &= \int_f v\cdot n_f~q_h\dsig \qquad\text{for all }q_h\in\overline{Q}_h. \label{eq:constrMINI-3}
\end{align}
This is possible by solving a system of linear equations of size $d \times d$.  
Then, the global operator~$C_h^0 \colon W^{1,1}(\Omega)^d \to \BFp(\tria_h)$ is given by
\begin{align}
C_h^0 v \coloneqq \sum_{f \in \facets^\partial(\tria_h) } C_{h,f}^0 v \qquad \text{ for any } v \in W^{1,1}(\Omega)^d,
\end{align} 
and satisfies 
\begin{align}
	\int_{\partial \Omega} (C_{h}^0 v) \cdot n~q_h\dsig &= \int_{\partial \Omega} v\cdot n~q_h\dsig \qquad\text{for all }q_h\in\overline{Q}_h. \label{eq:constrMINI-4}	
\end{align}
For~$C_h^1$ we use the operator used in the homogeneous case~\cite{GiraultLions2001,BelenkiBerselliDieningEtAl2012}, mapping $W^{1,1}(\Omega)^d\to B(\tria_h)^d$ satisfying 
\begin{align}
	\int_T C_{h}^1 v \dx &= \int_T v \dx, \qquad\label{eq:constrMINI-2}
\end{align} 
for any~$T \in \tria_h$ and any~$v\in W^{1,1}(\Omega)^d$. 
This handles the first term in~\eqref{eq:div-pres_IBP_cont}, since $\nabla q_h$ is piecewise constant.   
One can show all properties in Proposition~\ref{prop:Chj_operators} with $s(0) = s(1) = 0$ (see, e.g., \cite[A.1]{BelenkiBerselliDieningEtAl2012}). 
Hence, for the operator $\Pi_h$ as in \eqref{eq:constr_inhom} the  properties~\ref{prop:lin-proj}--\ref{prop:local-stab-approx} with~$k = s = 1$  are available, and also~\ref{prop:trace-pres}$_{\Gamma}$,~see Remark~\ref{rmk:properties_Fortin}~\ref{itm:rmk_part-bd}.   	

The facet bubble functions  added to the velocity space are of one degree higher than in the approach presented in Remark~\ref{rmk:mod-MINI-1}. 
However, only the ones corresponding to boundary facets are needed. 
In this sense, this approach based on boundary facet bubble enrichment may still be less computationally costly in terms of degrees of freedom, but more challenging to implement. 

\end{remark}

\begin{remark}[higher-order MINI element]\label{rmk:MINI-highorder}
Let us briefly address the higher-order generalisation of the MINI element \cite[Sec.~8.4.2,~8.7.1]{Boffi2013}, even though it may be of lesser relevance in practice. 
For $k\geq1$ the spaces~$X_h$ and~$Q_h$ are defined in \eqref{def:Xh} and \eqref{def:Qh}, respectively, with local spaces  
	\begin{align}\label{def:genMINI}
		Q(T)\coloneqq \mathcal{P}_k(T)\qquad \text{and} \qquad X(T)=\mathcal{P}_k(T)^d + B_{k-1,d}(T)^d,
	\end{align} 
	see \eqref{def:Bk-2d} for a definition of the latter bubble function space. 
	Assumption~\ref{ass:FEM} is satisfied with degree~$k \in \mathbb{N}$.  
	Note that the pressure space of the  Crouzeix--Raviart element of order~$k+1$ contains the generalised MINI pressure space: $\mathcal{L}_k^1(\tria_h)\subset \mathcal{L}_k^0(\tria_h)$. 
	Hence, the Fortin operator for the MINI element of order~$k 
	\geq d$ can be constructed analogously to that of the  generalised conforming Crouzeix--Raviart element of order~$k+1$. 
	Recall that~$k \geq d$ ensures that~$B_n(\tria_h)\subset \mathcal{L}^1_{d}(\tria_h)^d \subset X_h$ for the MINI element, which allows us to employ the operator~$C_h^0$ from Lemma~\ref{lem:Ch0}. 	
	For~$k<d$ the operator~$C_h^0$ can still be used if the local velocity space is enriched with~$B_n(T)$ as  
	\begin{align*}
		X(T)=\mathcal{P}_k(T)^d + B_{k-1,d}(T)^d + B_n(T).
	\end{align*}
	Note the similarity with the enriched conforming Crouzeix--Raviart element in Remark~\ref{rmk:confCRd3k2}. 
	The Fortin operator for the generalised (enriched) MINI element defined with~$I_h$ as in Lemma~\ref{lem:SZinterp} for the (possibly enriched) velocity space, with~$C_h^0$ as in Lemma~\ref{lem:Ch0} and with~$C_h^1$ as in Lemma~\ref{lem:Ch1_CR} satisfies all desired properties with~$s = 1$, see Proposition~\ref{prop:Fortin_CR}.
\end{remark}

\subsubsection{Generalised Taylor--Hood element}\label{subsec:TH}
Taylor and Hood laid the foundation for the widely used Taylor--Hood element~\cite{Taylor1973}. 
The lowest-order Taylor--Hood element was subsequently analysed, e.g., in~\cite{BercovierPironneau1979,Verfuerth1984,Stenberg1984,Stenberg1987}. 
Its higher-order generalisation for~$d \in \{2,3\}$ and~$k\geq 2$ 	is defined by the local spaces $X(T) = \mathcal{P}_k(T)^d$ and $Q(T)=\mathcal{P}_{k-1}(T)$ with continuous pressure functions~$\mathcal{Q}=C(\overline{\Omega})$. 
Then, the global spaces $(X_h,Q_h)$ as defined in~\eqref{def:Xh} and~\eqref{def:Qh} are 
\begin{align}\label{def:TH-global}
	Q_h = \mathcal{L}_{k-1}^1(\tria_h) \qquad \text{and}\qquad X_h = \mathcal{L}_k^1(\tria_h)^d. 
\end{align}
They satisfy Assumption~\ref{ass:FEM} with $ k \geq 2$, and inf-sup stability in the higher-order cases was established in~\cite{BrezziFalk1991,Boffi1994,Boffi1997}, see also~\cite[Sec.~8.8.2]{Boffi2013}. 

\begin{assumption}\label{ass:corner-triangles}
	Each $d$-simplex $T\in\tria_h$ contains at least one interior vertex $z\in\vertices^\circ(\tria_h)$, i.e., if~$\gamma_T$ denotes the number of interior vertices of $T \in \tria_h$, then one has $\gamma_T \geq 1$ for all $T \in \tria_h$.    
\end{assumption}

Under this condition inf-sup stability holds for~$k \geq 2$ and~$d \in \{2,3\}$.  
For $d=2$ and $k\in\{2,3\}$ a local Fortin operator was constructed in~\cite{Falk2008}. 
While a strategy to construct a local Fortin operator was introduced in~\cite{Girault2003} for~$k\geq d$, only recently the lowest-order case~$k=2$ for $d\geq 3$ was established in~\cite{DieningStornTscherpel2022}, see also~\cite{DieningStornTscherpel2021,Scott2021}.
Note that~\cite{Girault2003,DieningStornTscherpel2022} treat homogeneous Dirichlet boundary conditions, whereas~\cite{Scott2021} addresses zero normal trace conditions. 
In the following, we construct a local Fortin operator for the generalised Taylor--Hood element~\eqref{def:TH-global} for $k  \geq d$  satisfying~\ref{prop:lin-proj}--\ref{prop:local-stab-approx} under Assumption~\ref{ass:corner-triangles}. 
This extends the constructions in~\cite{Girault2003,DieningStornTscherpel2022} and makes the one  in~\cite{Girault2003} more explicit. 
The remaining case~$k = 2$ and~$d = 3$ is addressed in Remark~\ref{rmk:THd3k2} below. 

In the construction of~$\Pi_h$ in~\eqref{eq:constr_inhom} we choose~$C_h^0$ as in Lemma~\ref{lem:Ch0} mapping to~$B_n(\tria_h) \subset X_h$, since $k \geq d$. 
Let us now address the construction of~$C_h^1\colon W^{1,1}(\Omega)^d\to X_{h,0}$ in terms of certain local operators~$C_{h,z}^1\colon W^{1,1}(\Omega)^d\to \mathcal{L}_{k,0}^1(\ohz)^d$ for interior vertices~$z\in{\vertices^\circ(\tria_h)}$. 
These operators are supposed to satisfy for any $v \in W^{1,1}(\Omega)^d$  
	\begin{align}
	\int_{\Ohz} (C_{h,z}^1 v) \cdot \nabla q_h \dx 
	&=
	- \sum_{T\in\ohz} \frac{1}{\gamma_T} \int_T \divergence v~ q_h \dx \qquad \text{for any } q_h \in \mathcal{L}_{k-1}^1(\ohz)\cap L_0^2(\Ohz),
	\label{eq:constr_TH-3}
\end{align}
where~$\gamma_T \in \mathbb{N}$ is the number of interior vertices of a simplex~$T\in\tria_h$, see Assumption~\ref{ass:corner-triangles}. 
Then, the local operators are extended by zero and added to obtain 
\begin{align}\label{def:Ch1_TH}
	C_h^1 v \coloneqq \sum_{z \in \vertices^\circ(\tria_h)} C_{h,z}^1 v \qquad \text{ for } v \in W^{1,1}(\Omega)^d. 
\end{align}
Before addressing existence of the local operators $C_{h,z}^1$, let us motivate their construction by presenting the resulting divergence-preserving properties of~$C_{h}^1$. 

\begin{lemma}[divergence preservation of $C_h^1$]\label{lem:TH-div-Ch1}
If $C_h^1 \colon W^{1,1}(\Omega)^d \to X_{h,0}$ is constructed as in \eqref{def:Ch1_TH} with~$C_{h,z}^1\colon W^{1,1}(\Omega)^d\to \mathcal{L}_{k,0}^1(\ohz)^d$ satisfying
\eqref{eq:constr_TH-3} for any $z \in {\vertices^\circ(\tria_h)}$, then one has 
\begin{align*}
	\int_{\Omega} \divergence (v - C_{h}^1 v)~ q_h \dx = 0 \qquad \text{ for any } q_h \in \overline{Q}_h, 
\end{align*}   
and any $v \in W^{1,1}(\Omega)^d$ satisfying $\int_{f} v \cdot n_f \, \mathrm{d} \sigma(x) = 0$ for any $f \in \facets{(\tria_h)}.$   
\end{lemma}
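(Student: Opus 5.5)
The plan is to move the divergence of $C_h^1 v$ onto the pressure by integration by parts, use the defining relation \eqref{eq:constr_TH-3} vertex by vertex, and then resum over simplices using the weights $1/\gamma_T$. Fix $q_h \in \overline{Q}_h \subset \mathcal{L}^1_{k-1}(\tria_h)$ and $v \in W^{1,1}(\Omega)^d$ with vanishing normal facet means. I will show $\int_\Omega \divergence(C_h^1 v)\, q_h \dx = \int_\Omega \divergence v \, q_h \dx$.

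\emph{Step 1: integration by parts.} Each $C_{h,z}^1 v$ lies in $\mathcal{L}^1_{k,0}(\ohz)^d$. It is therefore continuous, piecewise polynomial, vanishes on $\partial\Ohz$, and after extension by zero belongs to $W^{1,\infty}_0(\Omega)^d$. Since $q_h$ is continuous and piecewise polynomial, integrating by parts on $\Ohz$ produces no boundary terms. Hence
\begin{align*}
\int_\Omega \divergence(C_h^1 v)\, q_h \dx = -\sum_{z \in \vertices^\circ(\tria_h)} \int_{\Ohz} C_{h,z}^1 v \cdot \nabla q_h \dx .
\end{align*}

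\emph{Step 2: local mean subtraction.} The restriction $q_h|_{\Ohz}$ is in general not mean-free on $\Ohz$, so \eqref{eq:constr_TH-3} cannot be applied to it directly. I replace it by $\tilde q_z \coloneqq q_h|_{\Ohz} - \langle q_h\rangle_{\Ohz}$. This function lies in $\mathcal{L}^1_{k-1}(\ohz)\cap L^2_0(\Ohz)$ and has the same gradient as $q_h$. Applying \eqref{eq:constr_TH-3} with $\tilde q_z$ gives
\begin{align*}
-\int_{\Ohz} C_{h,z}^1 v\cdot\nabla q_h \dx = \sum_{T\in\ohz}\frac{1}{\gamma_T}\int_T \divergence v\,\big(q_h - \langle q_h\rangle_{\Ohz}\big)\dx .
\end{align*}
The constant part contributes $\langle q_h\rangle_{\Ohz}\int_T\divergence v\dx$ for each $T$. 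By the divergence theorem, $\int_T\divergence v\dx = \sum_{f\in\facets(\{T\})}\pm\int_f v\cdot n_f\dsig$, which vanishes by the assumption on the facet means. This is precisely where that hypothesis enters, and it must hold on interior facets as well as boundary facets. So the constants drop out and we are left with $\sum_{T\in\ohz}\gamma_T^{-1}\int_T\divergence v\, q_h\dx$.

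\emph{Step 3: resummation.} Swapping the sums over interior vertices $z$ and simplices $T \ni z$, each $T$ appears exactly $\gamma_T$ times. By Assumption~\ref{ass:corner-triangles} we have $\gamma_T\ge 1$, so every simplex is counted, and the weights sum to one:
\begin{align*}
\sum_{z\in\vertices^\circ(\tria_h)}\sum_{T\in\ohz}\frac{1}{\gamma_T}\int_T\divergence v\,q_h\dx=\sum_{T\in\tria_h}\int_T\divergence v\,q_h\dx=\int_\Omega\divergence v\,q_h\dx .
\end{align*}
Combining Steps 1--3 gives the claim. The global mean-free condition on $q_h$ is not actually needed.

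The main point requiring care is Step 2. The local test functions in \eqref{eq:constr_TH-3} must be mean-free on the patch, and the resulting constant terms vanish only because of the zero normal facet means of $v$. That is exactly why the lemma will later be applied to $(I-C_h^0)w$.
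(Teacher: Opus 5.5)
Your proof is correct and follows the paper's argument: on each vertex patch you subtract the local mean and use the vanishing facet normal means to remove the constant part, then integrate by parts and resum using that each $T$ occurs $\gamma_T$ times. The paper integrates by parts globally where you do it patch-wise, which is equivalent. Your remarks that the resummation needs $\gamma_T \ge 1$ (Assumption~\ref{ass:corner-triangles}), and that the identity in fact holds for all $q_h \in Q_h$, are both accurate.
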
 
\begin{proof}
	Fix an arbitrary function $v \in W^{1,1}(\Omega)^d$ satisfying~$\int_{f} v \cdot n_f \, \mathrm{d} \sigma(x) = 0$ for any facet~$f \in \facets(\tria_h)$. Furthermore, let $q_h \in \overline{Q}_h$ be arbitrary. 
	By writing for fixed $z \in \vertices^\circ(\tria_h)$ the pressure function as  $$ q_h = q_h - \langle q_h \rangle_{\Ohz} + \langle q_h \rangle_{\Ohz}$$ 
	and using  $\int_{\partial T } v \cdot n \dsig  = 0$ for any $T\in \tria_h$, and~\eqref{eq:constr_TH-3}, we obtain  
 	\begin{align}\label{eq:TH-div-1}
 	\int_{\Ohz} (C_{h,z}^1 v) \cdot \nabla q_h \dx 
 =
 - \sum_{T\in\ohz} \frac{1}{\gamma_T} \int_T \divergence v~ q_h \dx.
 	\end{align} 
Since the global operator~$C_{h}^1$ maps to~$W^{1,1}_0(\Omega)^d$, integrating by parts,  using the definition of~$C_h^1$ in~\eqref{def:Ch1_TH} and employing ~\eqref{eq:TH-div-1} yields 
\begin{align*}
	\int_{\Omega} \divergence (C_{h}^1 v)~  q_h \dx 
	& = 
	-	\int_{\Omega}  (C_{h}^1 v)  \cdot \nabla q_h \dx  
	= - \sum_{z \in \vertices^\circ(\tria_h)} \int_{\Ohz} C_{h,z}^1 v \cdot \nabla q_h \dx \\
	& =  \sum_{z \in \vertices^\circ(\tria_h)}  \sum_{T\in\ohz} \frac{1}{\gamma_T} \int_T \divergence v~ q_h \dx
	 = \int_{\Omega} \divergence v~ q_h \dx,
\end{align*}	
where we have used that each $T \in \tria_h$ occurs $\gamma_T$ times in the sum. 
This proves the claim.   
\end{proof}

Next, we address well-posedness of the local operators $C_{h,z}^1$ satisfying~\eqref{eq:constr_TH-3}. 
Let~$z \in {\vertices^\circ(\tria_h)}$ be arbitrary but fixed. 
Let~$(q_i)_{i\in\{1,\ldots,\delta\}}$ be a basis of
\begin{align}\label{eq:TH_di}
	\overline{Q}_h(\Ohz) \coloneqq \mathcal{L}_{k-1}^1(\ohz)\cap L_0^2(\Ohz) \qquad \text{ for}\quad \delta \coloneqq \dim(\overline{Q}_h(\Ohz)).
\end{align}
We choose this basis by transformation from a fixed basis on a reference vertex patch, of which there are only finitely many combinatorial types. 
Note that the pressure basis functions~$q_i$ for $i\in\{1,\dots,\delta\}$ are non-constant, i.e., $\nabla q_i \not\equiv 0$. 
Similarly to several previous results in this direction (see Remark~\ref{rmk:tang-edge} below)
we use tangential edge bubble functions for all edges meeting at vertex $z$. 
Specifically, for~$e\in\Ehz$, and $\tau_e$ one of its unit tangential vectors, we use~$b_e^\tau= b_e \tau_e$  defined in~\eqref{def:tan_edge_bubble} and make the ansatz
\begin{align}\label{eq:constr_TH}
	C_{h,z}^1 v = \sum_{i=1}^{\delta} \alpha_i \sum_{e\in\Ehz} b_e (\nabla q_i \cdot \tau_e)\tau_e, 
\end{align}
with some coefficient vector $\alpha\in\R^{\delta}$.  
Since $\support(b_e) \subset\overline{\Omega}_z $ for any $e \in \Ehz$ and $b_e \in \mathcal{L}^1_{2}(\omega_h(e))$ and $\nabla q_i \in \mathcal{L}^0_{k-2}(\ohz)^d$,  we have  that $C_{h,z}^1$ indeed maps to~$\mathcal{L}^1_{k,0}(\ohz)^d$.  
Note that continuity is ensured, since only the tangential derivative of $q_i$ occurs, which exists and is continuous since $q_i$ is continuous.     
Determining~$\alpha \in \setR^\delta$ such that $C_{h,z}^1 v$ satisfies~\eqref{eq:constr_TH-3} is equivalent to solving the linear system~$M\alpha=F$ with~$M \in \setR^{\delta \times \delta}$ and~$F \in \setR^{\delta}$ given by 
\begin{align}\label{eq:TH_lin-system}
	M_{i,j}
	\coloneqq \dashint_{\Ohz} \sum_{e\in\Ehz} b_e (\nabla q_i\cdot\tau_e)(\nabla q_j\cdot\tau_e) \dx,
	\quad F_j
	\coloneqq - \frac{1}{\abs{\Ohz}} \sum_{T\in\ohz} \frac{1}{\gamma_T} \int_T \divergence v ~q_j \dx,
\end{align}
for $i,j\in\{1,\dots,\delta \}$. 
For $C_{h,z}^1$ to be well-defined, one has to show that this system has a unique solution. 

 The following is analogous to Lemma~\ref{lem:CR-matrix} for the Crouzeix--Raviart element. 
We use the notation~$h_z \coloneqq \tfrac{1}{\abs{\ohz}} \sum_{T \in \ohz} h_T$, which is equivalent to~$h_T$ for any~$T \in \ohz$ due to shape regularity.

\begin{lemma}\label{lem:TH-M-1}
	For a vertex~$z \in {\vertices^\circ(\tria_h)}$ the corresponding matrix~$M\in\R^{\delta \times \delta }$ defined in \eqref{eq:TH_lin-system} is symmetric and positive definite. 
	Furthermore, there is a constant~$C>0$ (depending only on~$k,d$ and the shape-regularity constant) such that 
	\begin{align*}
		\norm{M^{-1}}_2 \leq C h_z^{2},
	\end{align*}
	for any~$z \in {\vertices^\circ(\tria_h)}$, and uniformly in $h$.   
\end{lemma}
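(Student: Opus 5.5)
The proof follows the pattern of Lemma~\ref{lem:CR-matrix}. Symmetry of $M$ is immediate from \eqref{eq:TH_lin-system}. For positive definiteness, let $\alpha\in\R^\delta\setminus\{0\}$ and set $q_h\coloneqq\sum_{i}\alpha_i q_i\in\overline{Q}_h(\Ohz)$. Then
\begin{align*}
\alpha^\top M\alpha = \dashint_{\Ohz}\sum_{e\in\Ehz} b_e\,(\nabla q_h\cdot\tau_e)^2\dx \ge 0 .
\end{align*}
Suppose this quantity vanishes. Since $b_e>0$ in the interior of $\Omega_h(e)$ and $\nabla q_h\cdot\tau_e$ is a polynomial on each $T\in\omega_h(e)$, we get $\nabla q_h\cdot\tau_e\equiv 0$ on every $T\in\ohz$ for every edge $e\ni z$ of $T$.

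In each simplex $T\in\ohz$, the $d$ edges emanating from $z$ have linearly independent tangents. Hence $\nabla q_h\equiv 0$ on each $T$, so $q_h$ is piecewise constant. Since $q_h$ is continuous and $\Ohz$ is connected, $q_h$ is constant on $\Ohz$. The mean-zero condition then forces $q_h\equiv 0$, and linear independence of the basis gives $\alpha=0$. This is the key step, and the place where one must be careful: the positivity really uses only the $d$ edges through $z$ in each simplex, not all edges of the simplex.

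For the bound on $\norm{M^{-1}}_2$, it suffices to show $\alpha^\top M\alpha\gtrsim h_z^{-2}\abs{\alpha}^2$. Shape regularity gives finitely many combinatorial types of vertex patches. The basis $(q_i)$ is obtained by transforming a fixed basis on a reference patch of each type, with piecewise affine maps $\Phi_T$ on the simplices. Using \eqref{eq:scaling} elementwise, together with $\abs{T}\eqsim\abs{\Ohz}$ and $h_T\eqsim h_z$, we obtain
\begin{align*}
\alpha^\top M\alpha\eqsim h_z^{-2}\,\alpha^\top\hat M\alpha ,
\end{align*}
where $\hat M$ is the analogous matrix on the reference patch. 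The transformed tangents $\hat\tau_e$ are the reference edge directions up to scaling, and these scalings are uniformly bounded above and below.

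The main obstacle is that the reference patch is not unique: its geometry varies within a compact set determined by the shape-regularity constant, and only its combinatorics is fixed. I would handle this as follows. For each combinatorial type, the map from the vertex coordinates of the patch to the smallest eigenvalue of $\hat M$ is continuous. By the positivity argument above it is strictly positive on the compact set of admissible shape-regular configurations with unit size, so it attains a positive minimum. Taking the minimum over the finitely many types gives $\lambda_{\min}(\hat M)\ge c(k,d,\chi)>0$. It follows that $\norm{M^{-1}}_2=\lambda_{\min}(M)^{-1}\le C h_z^{2}$.
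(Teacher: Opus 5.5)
Your proof is correct and follows the paper's argument. Symmetry is immediate, positive definiteness follows from $b_e>0$ on $\Omega_h(e)$ together with the $d$ linearly independent tangents of the edges of each $T\in\ohz$ through $z$, and the eigenvalue bound comes from reducing to a reference patch for each of the finitely many combinatorial types.

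The only difference is in the last step. The paper fixes one reference patch per type and maps it elementwise affinely onto $\Ohz$, so all geometric variation is carried by the matrices $A_T$, which satisfy $|A_T\hat\tau_{\hat e}|\eqsim h_T$ and $|\det A_T|\eqsim h_T^d$ by shape regularity. Hence $\hat M$ is a single fixed positive definite matrix per type. Your continuity--compactness argument over admissible patch geometries is valid, but it is not needed, since the ``obstacle'' you identify is already absorbed by the affine scaling step you perform just before it.
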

\begin{proof}
	By definition the matrix $M$ is symmetric. 
	Let us show that it is positive definite. For this purpose, let~$\alpha\in\R^{\delta}\backslash\{0\}$ be arbitrary and fixed, and set~$q_h\coloneqq \sum_{i=1}^{\delta} \alpha_i q_i\in \overline{Q}_h(\Ohz)$. 
	Since~$b_e\geq 0$ in $\Ohz$ we have that $M$ is positive semidefinite, as  
		\begin{align}
		\abs{\Ohz}	\alpha^\top M \alpha 
			=  \sum_{e\in\Ehz}\int_{\Ohz} b_e \abs{\nabla q_h\cdot\tau_e}^2 \dx
			=  \sum_{e\in\Ehz}  \sum_{T\in\omega_h(e)}\int_{T} b_e \abs{\nabla q_h\cdot\tau_e}^2 \dx
			\geq 0. \label{eq:TH_M_posdef}
		\end{align}

		To show that~$M$ is positive definite, assume that~$\alpha^\top M \alpha=0$ for some~$\alpha\in\R^{\delta}\backslash \{0\}$ with corresponding~$q_h$.  
		In this case, each of the terms in~\eqref{eq:TH_M_posdef} has to vanish. 
		With~$b_e>0$ on~$\Omega_h(e)$, it follows that for $e\in\Ehz$ and all $T\in\omega_h(e)$ one has for the polynomial $\nabla q_h\big|_{T}$  that
		\begin{align}\label{eq:TH_nablaqdottau}
			\nabla q_h\cdot \tau_e = 0 \quad \text{ on } T.
		\end{align}
		Fix one~$T\in\ohz$ and note that $\{\tau_e \colon e\in\Ehz,  e\subseteq T\}$ are linearly independent and span $\R^d$. 
		Hence, \eqref{eq:TH_nablaqdottau} shows that $\nabla q_h \equiv 0$ on $T$, i.e., $q_h$ is constant on $T$. 
		Since  $q_h \in \overline{Q}_h(\Ohz)\subseteq C(\overline{\Omega}_z)$ it follows that $q_h$ is constant on~$\Ohz$, and with~$q_h \in L^2_0(\Ohz)$ it follows that~$q_h\equiv 0$. 
		This shows that $M$ is indeed positive definite.		
		
		To obtain an estimate on $\norm{M^{-1}}_2$ we need to estimate $\lambda_{\min}$, the lowest eigenvalue of $M$, below.    
		Due to the shape regularity of the sequence of triangulations, the number $m_z$ of simplices in $\ohz$ is bounded by some $\mu \in \mathbb{N}$
		\begin{align*}
			m_z \leq \mu,
		\end{align*}
		uniformly in $z \in \vertices^\circ(\tria_h)$ and uniformly in $h$. 
		Furthermore, there are only finitely many combinatorial types~$t \in \{1, \ldots, \theta\}$  of vertex patches, determined by the adjacency structure of the boundary vertices of $\ohz$. 
		Here~$\theta \in \mathbb{N}$ depends on the shape-regularity constant and the dimension only. 
		For each combinatorial type~$t \in \{1, \ldots, \theta\}$ we consider a respective reference patch $ \hat{\omega}_{t}$ consisting of $m = m(t) \leq \mu$ simplices $\hat{T}_1, \ldots, \hat{T}_{m}$ such that each simplex has~$0$ as vertex and~$d$ vertices on $\partial B_1(0)$. 
		Since we consider interior vertices~$z \in \vertices^\circ(\tria_h)$, the reference patches contain a ball $B_r(0)$ for some $r>0$.  We refer to Figure~\ref{fig:vertexpatch-reference} for a visualisation in the simple case of dimension~$d = 2$.      
		
		Let~$\hat{\Omega}_t$ denote the domain covered by the reference patch $\hat{\omega}_t$, i.e., $\hat{\Omega}_t=\mathrm{int}(\cup_{i=1}^{m} \hat{T}_i)$. 
		Let $\hat{\edges}_t$ denote all interior edges in the reference patch $\hat{\omega}_t$. 
		Then, for each $z \in {\vertices^\circ(\tria_h)}$, there is a reference patch $\hat{\omega}_t$ for a $t \in \{1, \ldots, \theta\}$ such that there is a continuous, piecewise affine bijection $\Phi_{z}\colon \hat{\Omega}_t \to \Ohz$, mapping vertices to vertices. 
		Specifically, there is a labelling $T_1, \ldots T_{m_z}$ of the simplices in $\ohz$ such that $\Phi_z(\hat T_j) = T_j$ for $j \in \{1, \ldots, {m_z}\}$  and for each 
		$T \in \ohz$ with corresponding~$\hat T \in \hat \omega_t$  there is a regular matrix $A_T\in\R^{d\times d}$ and $a_T\in\R^d$ such that $\Phi_z\big|_{\hat{T}}
		\colon\hat{T}\to T$ maps~$\hat{x}\mapsto A_T \hat{x} + a_T$ for $\hat x \in \hat{T}$.  
		
		We consider the transformed  basis~$\hat q_i$, and the edge bubble functions~$\hat{b}_{\hat{e}}(\hat{x})=b_e(x)$ for $x=\Phi_z(\hat{x})$, for~$ \hat e \in \hat{\edges}_t$. 
		The unit tangent vectors~$\hat{\tau}_{\hat{e}}$ in~$\hat{\Omega}_t$ are related to~$\tau_e$ through
		\begin{align*}
			\tau_e = \frac{A_T \hat{\tau}_{\hat{e}}}{|A_T \hat{\tau}_{\hat{e}}|}.
		\end{align*}
		With the transformation of gradient as~$\nabla q_h(x) = A_T^{-\top} \hat{\nabla}\hat{q}_h(\hat{x})$, see~\eqref{eq:scaling}, we have 
		\begin{align*}
			\nabla q_h(x)\cdot \tau_e = \frac{A_T^{-\top} \hat{\nabla}\hat{q}_h(\hat{x})\cdot A_T \hat{\tau}_{\hat{e}}}{|A_T \hat{\tau}_{\hat{e}}|} =  \frac{\hat{\nabla}\hat{q}_h(\hat{x})\cdot\hat{\tau}_{\hat{e}}}{|A_T \hat{\tau}_{\hat{e}}|} \eqsim h_z^{-1} \hat{\nabla}\hat{q}_h(\hat{x}) \cdot \hat{\tau}_{\hat{e}},
		\end{align*}
		since~$|A_T \hat{\tau}_{\hat{e}}|$ scales with $h_T$ and the shape regularity of the mesh yields $h_T\eqsim h_z$ for all $T\in\ohz$. 
		Thus, for  the reference matrix $\hat{M}\in\R^{\delta \times \delta }$ with entries
		\begin{align*}
			\hat{M}_{i,j} \coloneqq \sum_{\hat{e}\in\hat{\edges}_t} \dashint_{\hat{\Omega}_t}  \hat{b}_{\hat{e}} (\hat{\nabla}\hat{q}_i\cdot \hat{\tau}_{\hat{e}}) (\hat{\nabla}\hat{q}_j\cdot \hat{\tau}_{\hat{e}}) \dxhat\qquad \text{for } i,j \in\{1,...,\delta\},
		\end{align*}
		with scaling arguments we obtain 
		\begin{align}
			\alpha^\top M \alpha 
			= 
		\sum_{e\in\Ehz}
		\dashint_{\Ohz} b_e \abs{\nabla q_h\cdot\tau_e}^2 \dx	
		\eqsim 
		h_{z}^{-2} 
			\sum_{\hat e\in\hat \edges_t}\dashint_{\hat \Omega_t} \hat{ b}_{\hat{e}} \abs{\hat \nabla \hat{q}_h\cdot \hat \tau_{\hat{e}}}^2 \, \mathrm{d} \hat x
			 =
			   h_{z}^{-2}
					\alpha^\top \hat M \alpha,  	
		\end{align}
		and consequently $\hat{M}$ is also positive definite. 
		In particular, its smallest eigenvalue depends only on $k$, $d$ and the shape-regularity constant. 
		With 
		\begin{align*}
			\alpha^\top M \alpha  \geq c h_z^{-2} \abs{\alpha}^2	
		\end{align*}
		this proves the claim with constant $c$ depending on $k,d$ and the shape-regularity constant.       
		
\end{proof}	

\begin{figure}[ht]
	\centering
	
	\tikzset{
		mesh edge/.style={
			black,
			line width=0.45pt
		},
		mesh vertex/.style={
			circle,
			fill=teal,
			inner sep=1.6pt
		},
		central vertex/.style={
			circle,
			fill=red!75!black,
			inner sep=1.9pt
		},
		unit circle/.style={
			black!45,
			dashed,
			line width=0.45pt
		}
	}
	
	\begin{subfigure}[t]{0.46\textwidth}
		\centering
		
		\begin{tikzpicture}[scale=0.9]
			
			\coordinate (z) at (-0.3,-0.1);
			
			\coordinate (A) at (8:3.20);
			\coordinate (B) at (67:2.80);
			\coordinate (C) at (143:3.05);
			\coordinate (D) at (222:2.75);
			\coordinate (E) at (300:3.10);
			
			\draw[mesh edge]
			(A)--(B)--(C)--(D)--(E)--cycle;
			
			\draw[mesh edge]
			(z)--(A)
			(z)--(B)
			(z)--(C)
			(z)--(D)
			(z)--(E);
			
			\node[central vertex] at (z) {};
			
			\node[mesh vertex] at (A) {};
			\node[mesh vertex] at (B) {};
			\node[mesh vertex] at (C) {};
			\node[mesh vertex] at (D) {};
			\node[mesh vertex] at (E) {};
			
			\node[below=2pt] at (z) {$z$};
			
			\node at (37:1.8) {$T_1$};
			\node at (105:1.6) {$T_2$};
			\node at (183:1.5) {$T_3$};
			\node at (261:1.6) {$T_4$};
			\node at (334:1.9) {$T_5$};
			
		\end{tikzpicture}
		
		\caption{
			Vertex patch $\ohz$ with
			$m_z =5$.
		}
	\end{subfigure}
	\hfill
	\begin{subfigure}[t]{0.46\textwidth}
		\centering
		
		\begin{tikzpicture}[scale=0.9]
			
			\coordinate (zhat) at (0,0);
		
			\coordinate (Ahat) at (90:3);
			\coordinate (Bhat) at (18:3);
			\coordinate (Chat) at (-54:3);
			\coordinate (Dhat) at (-126:3);
			\coordinate (Ehat) at (162:3);
			
			\draw[unit circle] (zhat) circle (3);
			
			\draw[mesh edge]
			(Ahat)--(Bhat)--(Chat)--(Dhat)--(Ehat)--cycle;
			
			\draw[mesh edge]
			(zhat)--(Ahat)
			(zhat)--(Bhat)
			(zhat)--(Chat)
			(zhat)--(Dhat)
			(zhat)--(Ehat);
			
			\node[central vertex] at (zhat) {};
			
			\node[mesh vertex] at (Ahat) {};
			\node[mesh vertex] at (Bhat) {};
			\node[mesh vertex] at (Chat) {};
			\node[mesh vertex] at (Dhat) {};
			\node[mesh vertex] at (Ehat) {};
			
			\node[below =2pt] at (zhat) {$0$};

			\node at (54:1.70) {$\hat T_1$};
			\node at (-18:1.70) {$\hat T_5$};
			\node at (-90:1.70) {$\hat T_4$};
			\node at (-162:1.70) {$\hat T_3$};
			\node at (126:1.70) {$\hat T_2$};

			\node[
			black!65,
			above right=-1pt
			]
			at (2.05,2.05) {$B_1(0)$};
			
		\end{tikzpicture}
		
		\caption{
			Reference patch $\hat{\omega}$.
		}
		\label{fig:reference-macroelement-five}
	\end{subfigure}
	
	\caption{
		Vertex patch $\ohz$ of vertex $z$ with $m_z=5$ simplices and its reference vertex patch.}
	\label{fig:vertexpatch-reference}
\end{figure}

\begin{lemma}[properties of $C_h^1$]\label{lem:Ch1_TH-stab}
		The linear operator~$C_h^1\colon  W^{1,1}(\Omega)^d\to X_{h,0}$, defined in \eqref{def:Ch1_TH}, is well-defined and maps to~$\mathcal{L}^1_{k,0}(\tria_h)^d$. 
For any~$v \in W^{1,1}(\Omega)^d$ with $\int_{f} v \cdot n_f \, \mathrm{d} \sigma(x) = 0$ for any~$f \in \facets(\tria_h)$ it satisfies that     
	\begin{align}\label{eq:TH-div-Ch1}
		\int_{\Omega} \divergence (v - C_{h}^1 v)~ q_h \dx = 0 \qquad \text{ for any } q_h \in \overline{Q}_h. 
	\end{align}   
	Further, it is locally $W^{1,1}$-stable, that is 
	\begin{align*}
		\norm{C_h^1v}_{L^1(T)} \lesssim	h_T\norm{\nabla v}_{L^1(\Omega_h(T))} \quad \text{for all } v\in W^{1,1}(\Omega)^d,~\text{for all }T\in\tria_h,
	\end{align*}
uniformly in $h$. 
\end{lemma}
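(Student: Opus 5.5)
The proof splits into three parts: well-definedness (together with the mapping property), divergence preservation, and local stability.

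\emph{Well-definedness and mapping property.} For each interior vertex $z\in\vertices^\circ(\tria_h)$, Lemma~\ref{lem:TH-M-1} shows that the matrix $M$ in~\eqref{eq:TH_lin-system} is symmetric positive definite, hence invertible. Therefore $\alpha=M^{-1}F$ is uniquely determined by $v$ and depends linearly on it, so $C_{h,z}^1$ in~\eqref{eq:constr_TH} is well-defined and linear. As noted after~\eqref{eq:constr_TH}, each $C_{h,z}^1 v$ lies in $\mathcal{L}^1_{k,0}(\ohz)^d$: the functions $b_e$ vanish on $\partial\Ohz$, and only tangential derivatives of the continuous $q_i$ appear. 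Extending each $C_{h,z}^1v$ by zero gives a continuous piecewise polynomial of degree $k$ on $\Omega$ that vanishes on $\partial\Omega$. The sum~\eqref{def:Ch1_TH} therefore belongs to $\mathcal{L}^1_{k,0}(\tria_h)^d\subset X_{h,0}$.

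\emph{Divergence preservation.} Identity~\eqref{eq:TH-div-Ch1} is exactly Lemma~\ref{lem:TH-div-Ch1}. Its hypothesis~\eqref{eq:constr_TH-3} holds because the system $M\alpha=F$ was constructed to be equivalent to~\eqref{eq:constr_TH-3} for the basis $(q_i)_{i}$ of $\overline{Q}_h(\Ohz)$, and the identity extends to all of $\overline{Q}_h(\Ohz)$ by linearity.

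\emph{Local stability.} This is the main step. It mirrors the proof of Lemma~\ref{lem:Ch1_CR}, now on vertex patches. Fix $T\in\tria_h$. On $T$ only the operators $C_{h,z}^1$ with $z\in\vertices(\{T\})\cap\vertices^\circ(\tria_h)$ contribute, and there are at most $d+1$ of them. So it suffices to bound $\norm{C_{h,z}^1 v}_{L^1(\Ohz)}$ for such $z$, since $\Ohz\subset\Omega_h(T)$. Because the basis is transported from finitely many reference patches, we have the scaling bounds
\[
\norm{q_i}_{L^\infty(\Ohz)}+h_z\norm{\nabla q_i}_{L^\infty(\Ohz)}\lesssim 1,
\qquad \norm{b_e}_{L^\infty}\le 1,
\]
and the number of edges in $\Ehz$ is uniformly bounded. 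Hence
\[
\dashint_{\Ohz}\abs{C^1_{h,z}v}\dx\lesssim h_z^{-1}\abs{\alpha}\le h_z^{-1}\norm{M^{-1}}_2\abs{F}\lesssim h_z\abs{F},
\]
where the last step uses Lemma~\ref{lem:TH-M-1}. From the definition of $F$ and $\gamma_T\ge1$ we get
\[
\abs{F_j}\le \dashint_{\Ohz}\abs{\divergence v}\,\abs{q_j}\dx\lesssim \dashint_{\Ohz}\abs{\nabla v}\dx.
\]
Multiplying by $\abs{\Ohz}\eqsim\abs{T}$ then gives $\norm{C^1_{h,z}v}_{L^1(\Ohz)}\lesssim h_T\norm{\nabla v}_{L^1(\Ohz)}$, using $h_z\eqsim h_T$ by shape regularity.

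The main obstacle is making sure all constants are uniform. This concerns $\delta$, the $L^\infty$ bounds on $q_i$ and $\nabla q_i$, and the equivalence of the Euclidean and maximum norms on $\setR^\delta$. Uniformity holds because there are finitely many combinatorial patch types with bounded $\delta$, and because of the piecewise affine maps $\Phi_z$ from Lemma~\ref{lem:TH-M-1}. One further point needs attention: the transported basis is mean-free with respect to the reference patch measure, not necessarily on $\Ohz$. I would fix this by subtracting the patch mean on $\Ohz$, which keeps the $L^\infty$ bounds intact. All resulting constants depend only on $k$, $d$ and the shape-regularity constant.
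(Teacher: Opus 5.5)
Your proposal is correct and follows the paper's proof essentially step by step. Well-posedness comes from Lemma~\ref{lem:TH-M-1}, divergence preservation from Lemma~\ref{lem:TH-div-Ch1}, and stability from the chain $h_T^{-1}\abs{\alpha}\lesssim h_T^{-1}\norm{M^{-1}}_2\abs{F}\lesssim h_T\dashint_{\Ohz}\abs{\nabla v}\dx$, summed over the at most $d+1$ interior vertices of $T$; estimating on all of $\Ohz\subset\Omega_h(T)$ rather than on $T$ is immaterial. Your remark that the transported basis need not be mean-free on $\Ohz$ is a genuine subtlety the paper passes over, and subtracting the patch mean fixes it without changing $M$ or the $L^\infty$ bounds.
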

\begin{proof}
	By Lemma~\ref{lem:TH-M-1} for each $z \in {\vertices^\circ(\tria_h)}$ the matrix $M$ as in \eqref{eq:TH_lin-system} is positive definite, and hence~$C_{h,z}^1 \colon W^{1,1}(\Omega)^d \to \mathcal{L}_{k,0}^1(\ohz)^d$ satisfying 
~\eqref{eq:constr_TH-3} is well-defined. 
The divergence-preservation property is proved in Lemma~\ref{lem:TH-div-Ch1}.

To show the local stability let~$T \in \tria_h$ be arbitrary but fixed and let~$v \in W^{1,1}(\Omega)^d$. 
Furthermore, for fixed $z \in {\vertices^\circ(\tria_h)}$ such that~$z \in T$ we consider~$C_{h,z}^1$ satisfying~\eqref{eq:constr_TH-3}. 
We apply~\eqref{eq:constr_TH} and~\eqref{eq:TH_lin-system}, together with the fact that $\dashint_{T} b_e \dx$ is a constant  depending only on the shape regularity and $d$, the scaling property  $\norm{{q_i}}_{L^\infty(T)} +  h_T \norm{\nabla q_i}_{L^\infty(T)} \lesssim 1$ for any~$i \in \{1, \ldots, \delta\} $, the equivalence of norms on $\setR^\delta$, and $M \alpha = F$ with the bound on $\norm{M^{-1}}_{2}$ in Lemma~\ref{lem:TH-M-1} and shape regularity to find that 
\begin{equation}\label{est:stab-Ch1z} 
 \begin{aligned}
 	\dashint_T |C_{h,z}^1 v|\dx
 	& =
 	\dashint_T \left| \sum_{i= 1}^{\delta} \alpha_i \sum_{e\in\Ehz} b_e(\nabla q_i\cdot \tau_e)\tau_e \right| \dx
 	\lesssim
 	\sum_{i=1}^{\delta }\abs{\alpha_i} \norm{\nabla q_i}_{L^\infty(T)}  \sum_{\substack{e\in\Ehz:  \\e \subset T}} \dashint_T b_e \dx \\
 	&\lesssim 
  h_T^{-1} 	\abs{\alpha} 
 	 \lesssim 
 	h_T^{-1} \norm{M^{-1}}_2 \abs{F} 
 	\lesssim 
  h_T^{-1}  	h_z^{2} \max_{j \in \{1, \ldots, \delta\}} \abs{F_j} \\
 	& \lesssim 
 	h_T  {\max_{j \in \{1, \ldots, \delta\}}}\norm{q_j}_{L^{\infty}(\Ohz)} \dashint_{\Ohz}\abs{\divergence v} \dx  
 	\lesssim h_T \dashint_{\Omega_h(T)} \abs{\nabla v}\dx.
 \end{aligned}
 \end{equation}
   Then, using the definition of the operator~$C_h^1$ in~\eqref{def:Ch1_TH},  the fact that the number of vertices per simplex is bounded, and the estimate~\eqref{est:stab-Ch1z} we find that  
	\begin{align*}
		\dashint_T \abs{C_h^1v} \dx  & 
		\leq 
	\sum_{z \in T \cap \vertices^\circ(\tria_h)}	\dashint_T  
		\abs{C_{h,z}^1 v}\dx 
		\lesssim h_T \dashint_{\Omega_h(T)} \abs{\nabla v}\dx,
	\end{align*}
	which proves the claim.
\end{proof}

Now we are in a position to collect the properties of the local Fortin operator for the Taylor--Hood element for $k \geq d$, see~\eqref{def:TH-global}. 

\begin{proposition}[{generalised} Taylor--Hood element]\label{prop:Fortin_TH}
	Let Assumption~\ref{ass:corner-triangles} be satisfied. 
	Then, for the Taylor--Hood element of order~$k \geq d$ the operator~$\Pi_h\colon W^{1,1}(\Omega)^d \to X_h$ defined by~\eqref{eq:constr_inhom} with~$I_h \colon W^{1,1}(\Omega)^d \to X_h$ as in Lemma~\ref{lem:SZinterp}, with the operator $C_h^0$ in Lemma~\ref{lem:Ch0} and with the operator $C_h^1$~in Lemma~\ref{lem:Ch1_TH-stab}, is a Fortin operator satisfying the properties \ref{prop:lin-proj}--\ref{prop:local-stab-approx} with~$s = 2$. 
	
	Choosing $I_h$ suitably,   also~\ref{prop:trace-pres}$_{\Gamma}$ can be satisfied, see Remark~\ref{rmk:properties_Fortin}~\ref{itm:rmk_part-bd}.
\end{proposition}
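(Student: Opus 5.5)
The plan is to apply Proposition~\ref{prop:Chj_operators} with $I_h$ from Lemma~\ref{lem:SZinterp}, $C_h^0$ from Lemma~\ref{lem:Ch0} and $C_h^1$ from Lemma~\ref{lem:Ch1_TH-stab}, and to check its three hypotheses.

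\emph{Admissibility of the operators and hypotheses \ref{itm:lemma_stab_approx}, \ref{itm:lemma_trace-int}.} Since $k\ge d$, we have $B_n(\tria_h)\subset\mathcal{L}^1_d(\tria_h)^d\subset X_h$. Hence $C_h^0$ maps into $X_h$, and by Remark~\ref{rmk:Ch0} it satisfies hypotheses \ref{itm:lemma_stab_approx} and \ref{itm:lemma_trace-int} with $s(0)=0$. Lemma~\ref{lem:Ch1_TH-stab} shows that $C_h^1$ maps into $\mathcal{L}^1_{k,0}(\tria_h)^d\subset X_{h,0}$. It is locally $W^{1,1}$-stable with $\|C_h^1v\|_{L^1(T)}\lesssim h_T\|\nabla v\|_{L^1(\Omega_h(T))}$, which is \eqref{eq:Chj-stab} with $s(1)=1$ after dividing by $|T|\eqsim|\Omega_h(T)|$. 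Proposition~\ref{prop:Chj_operators} then gives $s=s(0)+s(1)+1=2$, as claimed.

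\emph{Divergence preservation, hypothesis \ref{itm:lemma_div-pres}.} This is the only substantive step. Fix $v\in W^{1,1}(\Omega)^d$ and set $w\coloneqq(I-C_h^0)v$. By \eqref{eq:constrBernardi-Raugel}, $\int_f w\cdot n_f\dsig=0$ for every facet $f\in\facets(\tria_h)$. This is exactly the hypothesis of Lemma~\ref{lem:Ch1_TH-stab}, so \eqref{eq:TH-div-Ch1} yields
\begin{align*}
\int_\Omega \divergence\bigl((I-C_h^1)(I-C_h^0)v\bigr)\,q_h\dx=0\qquad\text{for all } q_h\in\overline Q_h .
\end{align*}
Since $I-C_h=(I-C_h^1)(I-C_h^0)$ with $C_h=C_h^0+C_h^1(I-C_h^0)$, this is precisely hypothesis~\ref{itm:lemma_div-pres}. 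The main point to double-check is inside Lemma~\ref{lem:TH-div-Ch1}. There, subtracting the patch mean $\langle q_h\rangle_{\Ohz}$ relies on $\int_T\divergence w\dx=\int_{\partial T}w\cdot n\dsig=0$ on each $T$, which follows from the vanishing facet means. Each simplex must also be counted exactly $\gamma_T\ge1$ times, which uses Assumption~\ref{ass:corner-triangles}. Both facts are already in place, so no new difficulty arises. Because Proposition~\ref{prop:Chj_operators} only assumes $I_h$ is a linear projection, its verification of \ref{prop:lin-proj}--\ref{prop:local-stab-approx} applies verbatim.

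\emph{Partial boundary conditions.} For \ref{prop:trace-pres}$_\Gamma$, note that $C_h^0$ is defined facet by facet, so it is local on boundary facets, and $C_h^1$ has zero trace. Choosing $I_h$ as in Remark~\ref{rmk:SZ-discr-traces}~\ref{itm:SZ-part-bd} and invoking the final statement of Proposition~\ref{prop:Chj_operators} then gives the claim.
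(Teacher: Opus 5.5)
Your proposal is correct and follows the paper's proof essentially verbatim. You verify the hypotheses of Proposition~\ref{prop:Chj_operators} via Remark~\ref{rmk:Ch0} and Lemma~\ref{lem:Ch1_TH-stab}, and you obtain divergence preservation by applying \eqref{eq:TH-div-Ch1} to $(I-C_h^0)v$, whose facet normal means vanish by \eqref{eq:constrBernardi-Raugel}; your explicit bookkeeping $s(0)=0$, $s(1)=1$ correctly accounts for $s=2$.
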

\begin{proof}
	We need to check the conditions of Proposition~\ref{prop:Chj_operators}. 
	Since~$k \geq d$ we have $B_{n}(\tria_h) \subset X_h$, and thus $C_h^0$ in Lemma~\ref{lem:Ch0} is available. 
	By Remark~\ref{rmk:Ch0} it remains to verify the properties of $C_h^1$ and 
	the divergence-preservation property in Proposition~\ref{prop:Chj_operators}~\ref{itm:lemma_div-pres}.  
	The former are satisfied thanks to Lemma~\ref{lem:Ch1_TH-stab}. 
	To show the latter, let~$q_h \in \overline Q_h$ and~$v \in W^{1,1}(\Omega)^d$ be arbitrary.   
	By the properties of~$C_h^0$  in Lemma~\ref{lem:Ch0} we have that 
	\begin{align*}
		\int_{f} (v - C_h^0 v) \cdot n_f  \dsig = 0 \qquad \text{ for any } f \in \facets(\tria_h).  
	\end{align*}
	Hence, we may apply~\eqref{eq:TH-div-Ch1} in Lemma~\ref{lem:Ch1_TH-stab} to $v - C_h^0 v$, which shows 
	\begin{align*}
		\int_{\Omega} \divergence ((I - C_{h}^1)(I - C_h^0)(v))~ q_h \dx = 0 \qquad \text{ for any } q_h \in \overline{Q}_h. 
	\end{align*}   
	This shows the claim for $C_h= C_h^0+C_h^1(I-C_h^0)$ in Proposition~\ref{prop:Chj_operators}~\ref{itm:lemma_div-pres} and finishes the proof.
\end{proof}

\begin{remark}\label{rmk:tang-edge}
	Tangential edge bubble functions have also been used in the construction  in~\cite{DieningStornTscherpel2022} for the case $k = 2$, $d \geq 2$, and  in~\cite{Scott2021}. 
Previously, they have been employed in dimension $d = k = 2$, e.g., in~\cite{BercovierPironneau1979}, \cite[Ch.~II, Sec.~4.2]{Girault1986},~\cite[Sec.~3.1]{Girault2003}, \cite{Falk2008}, \cite{MardalSchoeberlWinther2013}, 
	and for higher-order elements in~\cite{Boffi1997,Falk2008}, see also~\cite[eq.~(5.5.9)]{Bernardi2024}. 
	
	Similarly to the observation in~\cite{MardalSchoeberlWinther2013,DieningStornTscherpel2022}, the construction of our Fortin operator gives rise to a reduced Taylor--Hood element, which only uses $\mathcal{L}^1_1(\tria_h)^d$, $B_n(\tria_h)$ and tangential edge bubble functions.
	\end{remark}

	\begin{remark}[alternative correction operators for $k = 2$ ]\label{rmk:Ch1-DST}
		
		The construction in the lowest-order case $k = 2$ in~\cite{DieningStornTscherpel2021,Scott2021,DieningStornTscherpel2022} uses tangential edge bubble functions to obtain a correction operator $\tilde C_h^1 \colon L^{1}(\Omega)^d \to \mathcal{L}^1_2(\tria_h)^d \cap W^{1,1}_n(\Omega) = X_{h,n}$ which is locally $W^{1,1}$-stable and satisfies 
		\begin{align}\label{eq:div-pres-DST}
			\int_{\Omega} (\tilde C_h^1 v - v) \cdot \nabla q_h \dx = 0 \qquad \text{ for any } v \in L^1(\Omega)^d \text{ and any } q_h \in Q_h. 
		\end{align} 
		In particular, this operator satisfies $\tilde C_h^1(W^{1,1}_n(\Omega)) \subset X_{h,n}$, but it does not preserve zero traces, since it uses boundary tangential edge bubble functions. 
		However, since it maps to $X_{h,n}$ one may integrate by parts in~\eqref{eq:div-pres-DST}
		and for any~$v \in W^{1,1}_n(\Omega)$ it follows that 
		\begin{align}\label{eq:div-pres-DST-hom}
			\int_{\Omega} \divergence (\tilde C_h^1 v - v)  q_h \dx = 0 \qquad \text{ for any } q_h \in Q_h. 
		\end{align}
		The resulting Fortin operator~$\Pi_h$ as in~\eqref{eq:constr_inhom} with $C_h^1 \coloneqq \tilde C_h^1$ and $C_h^0 \equiv 0$ satisfies properties~\ref{prop:lin-proj}, \ref{prop:div-pres} for $v \in W^{1,1}_n(\Omega)$,
		\ref{prop:trace-pres}~\ref{itm:global_zeronormaltrace}, as well as ~\ref{prop:global-stab-approx} and \ref{prop:local-stab-approx} with $k = 2$ and $s = 2$. 
		
		To ensure that homogeneous traces are also preserved, in~\cite{DieningStornTscherpel2022} the operator $\tilde C_h^1$ was modified so that the use of boundary tangential edge bubble functions is avoided. 
		The resulting operator $C^1_h \colon L^1(\Omega)^d \to \mathcal{L}^1_{2,0}(\tria_h)^d = X_{h,0}$ is still locally $W^{1,1}$-stable and satisfies~\eqref{eq:div-pres-DST}.  
		By construction, it satisfies $C^1_h(W^{1,1}_0(\Omega)^d) \subset X_{h,0}$, and \eqref{eq:div-pres-DST-hom} still holds. 
		The resulting Fortin operator $\Pi_h$ as in \eqref{eq:constr_inhom} with~$C_h^0 \equiv 0$ satisfies properties~\ref{prop:lin-proj}, \ref{prop:div-pres} for $v \in W^{1,1}_n(\Omega)$,
		\ref{prop:trace-pres}~\ref{itm:global_zerotrace} and \ref{itm:global_zeronormaltrace}, as well as ~\ref{prop:global-stab-approx} and \ref{prop:local-stab-approx} with $k = 2$ and $s = 2$, see~\cite[Sec.~2.4]{DieningStornTscherpel2022}. 
		This modification is not needed when preservation of homogeneous traces is not important. 
		Note that this construction does not ensure~\ref{prop:trace-pres}~\ref{itm:global_meantrace}, nor \ref{prop:div-pres} for $v \in W^{1,1}(\Omega)$.
		
		In contrast, here we address trace preservation with an extra correction operator, which allows us to obtain all trace-preservation properties.  
		This comes at the price of an enrichment in the lowest-order case, as described in the following remark.  
	\end{remark}

\begin{remark}[$d = 3$ and $k = 2$]\label{rmk:THd3k2}
	The assumption $k\geq d$ ensures that the facet bubble functions needed for $C_h^0$ are contained in $X_h$, see Lemma~\ref{lem:Ch0}. 
	Clearly, this does not cover the important  case $d=3$ and $k=2$. 
	In this case one may enrich the velocity space, similarly to the MINI element in Section~\ref{subsec:modMINI}. There are two options: 
	\begin{enumerate} 
	\item \label{itm:THd3k2-opt1} 
	The first option (similar to Section~\ref{subsec:modMINI} for the modified MINI element) consists in choosing the velocity space 
	\begin{align*}
			X_h = \mathcal{L}_2^1(\tria_h)^3 + B_n(\tria_h),
		\end{align*}
	with~$B_n(\tria_h)$ defined in ~\eqref{def:global_normal_face_bubblespace}.  
	Then, $\Pi_h$ is constructed using $C_h^0$ in Lemma~\ref{lem:Ch0} and $C_h^1$ in Lemma~\ref{lem:Ch1_TH-stab} and the proof of its properties proceeds as before.  
	\item \label{itm:THd3k2-opt2} 
	Alternatively, one may enrich with boundary facet bubbles only, similarly to Remark~\ref{rmk:MINI-alternative} for the MINI element. 
	More precisely, the velocity space can be chosen as 
	\begin{align*}
	X_h = \mathcal{L}_2^1(\tria_h)^3 + \BFp(\tria_h),
\end{align*}
with~$\BFp(\tria_h) \subset \mathcal{L}^1_{4}(\tria_h)^3$ defined in~\eqref{def:global_face_bubblespace_d+1}.  
Then, $\Pi_h$ is constructed using $C_h^1$ from \cite{DieningStornTscherpel2022} as summarised in Remark~\ref{rmk:Ch1-DST}, and with $C_h^0$ as in Remark~\ref{rmk:MINI-alternative} for the MINI element. 
This is natural because for both elements the local pressure space is~$Q(T) = \mathcal{P}_1(T)$.  
The only property which is not immediate is the divergence preservation for $v \in W^{1,1}(\Omega)^3$, see Proposition~\ref{prop:Chj_operators}. 
For arbitrary $w \in W^{1,1}(\Omega)^3$ and $q_h \in \overline{Q}_h$,  using integration by parts, the fact that $C_h^1$ maps to $X_{h,0}$ and property~\eqref{eq:div-pres-DST} we find    
\begin{equation}\label{eq:TH-div-2}  
\begin{aligned}
	\int_{\Omega} \divergence(C_h^1(I-C_h^0) w)\, q_h \dx 
	& = -
		\int_{\Omega} (C_h^1(I-C_h^0) w) \cdot \nabla  q_h \dx 
	 \\
	& = -  	\int_{\Omega} ((I-C_h^0) w) \cdot \nabla  q_h \dx.  
\end{aligned}
\end{equation}
Then, for arbitrary~$v \in W^{1,1}(\Omega)^3$ we choose $w = v - I_h v$. 
Using the definition of $\Pi_h$ in~\eqref{eq:constr_inhom},~\eqref{eq:TH-div-2}, integrating by parts, and finally~\eqref{eq:constrMINI-4}, we obtain
\begin{align*}
\int_{\Omega} \divergence(v- \Pi_h v) q_h \dx 
	&= 	\int_{\Omega} \divergence((I-C_h^0) w)\, q_h \dx 
	- 	\int_{\Omega} \divergence(C_h^1(I-C_h^0) w)\, q_h \dx \\
&=	\int_{\Omega} \divergence((I-C_h^0) w)\, q_h \dx 
+ 	\int_{\Omega} ((I-C_h^0) w) \cdot \nabla  q_h \dx  \\
& = \int_{\partial \Omega} (w - C_h^0 w) \cdot n\, q_h \dsig = 0 \qquad \text{ for any } q_h \in \overline{Q}_h.
\end{align*}
	\end{enumerate}
\end{remark}	
	

\section{Abstract global Fortin operators}\label{sec:globalFortin}
In this section we present a strategy to obtain a Fortin operator~$\Pi_h^p$ with trace-preservation properties~\ref{prop:trace-pres}, provided that a Fortin operator for homogeneous traces $\Pi_{h,0}^{p}$ is available. 
This is of interest, since the local constructions in~Section~\ref{sec:localFortin} are element specific. 
In contrast, the results in this section apply to all inf-sup stable finite element pairs, but they do not ensure locality. 

\begin{assumption}\label{ass:Fortin_p}
Let~$(X_h,Q_h)_h$ be a family of finite element pairs as in~\eqref{def:Xh},\eqref{def:Qh}, with respect to a shape-regular family of triangulations~$(\tria_h)_h$ and let~$X_{h,0}\coloneqq X_h\cap W^{1,\infty}_0(\Omega)^d$ and~$\overline Q_h \coloneqq Q_h \cap L^\infty_0(\Omega)$. 
For~$p\in(1,\infty)$ assume that there is an operator~$\Pi_{h,0}^{p}\colon  W^{1,p}_0(\Omega)^d\to X_{h,0}$ such that 
	\begin{enumerate}[label=(\roman*)]
		\item \label{itm:Fortin_p-div}
		 the divergence is preserved in the sense that 
		\begin{align*}
			\skp{\divergence \Pi_{h,0}^{p} (v)}{q_h} = \skp{\divergence v}{q_h} \qquad \text{for all } q_h\in\overline{Q}_h,~\text{for all } v\in W^{1,p}_0(\Omega)^d;
		\end{align*}
		\item 
		\label{itm:Fortin_p-stab} 
		there is a constant $c_{F}>0$ such that 
		\begin{align*}
			\norm{\nabla \Pi_{h,0}^{p}(v)}_{L^p(\Omega)} \leq c_{F} \norm{\nabla v}_{L^p(\Omega)} \qquad \text{for all } v\in W^{1,p}_0(\Omega)^d.
		\end{align*}
	\end{enumerate} 
\end{assumption}
As before, to obtain a Fortin operator~$\Pi_h^p$ we follow the construction principle \eqref{eq:constr_inhom} as
\begin{align}\label{eq:constr_Fortin-global}
	(I-\Pi_h^p) = (I-C_h^1)(I-C_h^0)(I-I_h),
\end{align}
with an interpolation operator $I_h$ as in Lemma~\ref{lem:SZinterp}, a correction operator~$C_h^0$ handling the traces and a correction map $C_h^1$ constructed using the homogeneous Fortin operator $\Pi_{h,0}^{p}$ from Assumption~\ref{ass:Fortin_p}.
The resulting operator~$\Pi_h^p$ is in general neither local nor linear, but divergence-preservation properties and stability properties in~\ref{prop:div-pres}--\ref{prop:global-stab-approx} are inherited from~$\Pi_{h,0}^{p}$.

\begin{remark}\label{ex:globalFortin}
	\begin{enumerate}[label=(\alph*)]
		\item 
		\label{itm:example-p-inf-sup} 
		If in the setting of Assumption~\ref{ass:FEM} the discrete inf-sup condition~\eqref{def:discr-inf-sup-cond} holds for~$p\in(1,\infty)$, then the Fortin lemma~\ref{lem:Fortin_operator} yields the existence of a $p$-dependent homogeneous Fortin operator $\Pi_{h,0}^{p}$ as in Assumption~\ref{ass:Fortin_p}. 
		Note that in general this operator depends on $p$ and is not linear, and hence, any trace-preserving operator~$\Pi_h^p$ constructed from it need not be linear and may depend on $p$. 
		\item 
		\label{itm:example-p2} 
		If Assumption~\ref{ass:FEM} holds for $p=2$, then the operator $\Pi_{h,0}^{2}$ guaranteed to exist by  the Fortin lemma~\ref{lem:Fortin_operator} is linear. 
		The trace-preserving operator $\Pi_h^2$ constructed by~\eqref{eq:constr_Fortin-global}  inherits linearity and hence also the projection property~\ref{prop:lin-proj} from $I_h$, see Lemma~\ref{lem:SZinterp}. 
		\item If there exists a single homogeneous Fortin operator~$\Pi_{h,0}$ satisfying Assumption~\ref{ass:Fortin_p} for all~$p\in(1,\infty)$, then also the operator~$\Pi_h$ constructed by~\eqref{eq:constr_Fortin-global} is independent of $p$. 
		Such an operator~$\Pi_{h,0}$ can be achieved by local construction, cf. Section~\ref{sec:localFortin}.
	\end{enumerate}
\end{remark}

Let us start by constructing a Bernardi--Raugel-type operator~$C_h^0$ which only uses boundary facets, 
cf.~Lemma~\ref{lem:Ch0}.
Recall  that $	B_n^\partial(\tria_h)  = \linearspan\{b_f n_f \colon f \in \facets^{\partial}(\tria_h) \}  \subset\mathcal{L}^1_{d}(\tria_h)^d,$ denotes the space of boundary bubble functions, see~\eqref{def:bd_face_bubbles}, which is a subspace of~$B_n(\tria_h)$, see~\eqref{def:global_normal_face_bubblespace}. 
We assume that $B_n^\partial(\tria_h)\subset X_h$, or we enrich the velocity space with it, in order to work with the operator $C_h^0$.  

\begin{lemma}\label{lem:Ch0_global}
	There exists a linear operator~$C_h^0\colon W^{1,1}(\Omega)^d\to B_n^{\partial}(\tria_h)$ such that 
	\begin{enumerate}[label=(\roman*)]
		\item (trace preservation) \label{itm:Ch0-trace-global} 
 $C_h^0(W^{1,1}_0(\Omega)^d)
			\subset W^{1,1}_{0}(\Omega)^d$,  $C_h^0(W^{1,1}_n(\Omega)) \subset W_{n}^{1,1}(\Omega) $ and
		\begin{align}
			\int_f (C_h^0 v)\cdot n_f \dsig = \int_f v \cdot n_f \dsig \qquad \text{for all }f\in\facets^\partial(\tria_h), \label{eq:constrBernardi-Raugel-bd}
		\end{align}
		for any $v\in W^{1,1}(\Omega)^d$. 
		\item (stability)  \label{itm:Ch0-stab-global} for each $p \in [1,\infty]$ the operator is locally $W^{1,p}$-stable, i.e., 
		\begin{align*}
			\norm{ C_h^0 v}_{L^p(T)}	 + h_T \norm{\nabla (C_h^0 v)}_{L^p(T)} 
			\lesssim \norm{v}_{L^p(T)} +  h_T \norm{\nabla v}_{L^p(T)}, 
		\end{align*}
	 for any $v \in W^{1,p}(\Omega)^d$, for any $T \in \tria_h$, uniformly in $h$.
	\end{enumerate} 
Note that $C_h^0(v)$ is supported only in the boundary layer $\displaystyle \cup_{f \in \facets^\partial(\tria_h)}\omega_h(f)$, for any $v \in W^{1,1}(\Omega)^d$.  
The operator~$C_h^0$ is local in the sense that
	$\support(C_h^0 v) \subset \overline{\Omega_{h}(f)}$ for any $v \in W^{1,1}(\Omega)^d$ with support~$\support(v) \subset \overline{\Omega_{h}(f)}$, for any $f \in \facets^\partial(\tria_h)$.
	
\end{lemma}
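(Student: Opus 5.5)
The plan is to define $C_h^0$ explicitly, facet by facet, as in Lemma~\ref{lem:Ch0}, but using only boundary facets:
\begin{align*}
C_h^0 v \coloneqq \sum_{f\in\facets^\partial(\tria_h)} \frac{\int_f v\cdot n_f \dsig}{\int_f b_f \dsig}\, b_f n_f .
\end{align*}
For a boundary facet $f$ the bubble $b_f$ is supported in the single simplex $T_f\in\omega_h(f)$, and it vanishes on every facet other than $f$. Hence $\int_{f'} b_f n_f\cdot n_{f'}\dsig = \delta_{f,f'}\int_f b_f\dsig$ for $f'\in\facets^\partial(\tria_h)$, which gives \eqref{eq:constrBernardi-Raugel-bd} directly. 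Linearity is obvious. Locality and the support statement also follow, since each coefficient depends only on the trace of $v$ on $f$ and the basis function lives in $\overline{\Omega_h(f)}$.

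For the trace preservation in \ref{itm:Ch0-trace-global}, note the following. If $\tr v=0$, all coefficients vanish, so $C_h^0 v=0\in W^{1,1}_0(\Omega)^d$. If $\tr(v)\cdot n=0$ on $\partial\Omega$, then $v\cdot n_f=\pm v\cdot n=0$ on each boundary facet, so again $C_h^0 v=0$. Thus both inclusions hold trivially.

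The main work is the stability \ref{itm:Ch0-stab-global}. I will fix $T\in\tria_h$. Only the boundary facets $f\subset\partial T$ contribute on $T$, and there are at most $d+1$ of them. For such $f$, scaling \eqref{eq:scaling} gives
\[
\int_f b_f\dsig \eqsim |f| \eqsim h_T^{d-1}, \qquad \norm{b_f}_{L^p(T)} + h_T\norm{\nabla b_f}_{L^p(T)}\lesssim |T|^{1/p}.
\]
The coefficient is bounded by $|f|^{-1}\norm{v}_{L^1(f)}$. The scaled trace inequality on $T$ (mapped to $\hat T$) yields
\[
|f|^{-1}\norm{v}_{L^1(f)}\lesssim |T|^{-1}\big(\norm{v}_{L^1(T)}+h_T\norm{\nabla v}_{L^1(T)}\big).
\]
Hölder's inequality then bounds the right-hand side by $|T|^{-1/p}\big(\norm{v}_{L^p(T)}+h_T\norm{\nabla v}_{L^p(T)}\big)$. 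Multiplying by the bound on $b_f n_f$ gives the claimed local $W^{1,p}$ estimate, uniformly in $h$ by shape regularity.

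The only delicate point is that the trace inequality and the facet integral are taken on the same simplex $T_f$, so no neighbourhood enlargement appears. This is why the estimate holds with $T$ itself on the right-hand side rather than $\Omega_h(T)$. Everything else is routine scaling.
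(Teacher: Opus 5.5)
Your proposal is correct and follows essentially the paper's route. Your explicit operator is exactly the restriction of the operator of Lemma~\ref{lem:Ch0} to boundary facets, and your stability argument (trace inequality on the single simplex $T_f$, Hölder's inequality, and the $W^{1,p}$-scaling of $b_f n_f$) simply unpacks the paper's appeal to the local $L^1$-stability \eqref{eq:BR_Chdiv-stab} combined with inverse estimates and Hölder's inequality, while the trace-preservation part rests, as in the paper, on $C_h^0 v=0$ whenever the (normal) trace of $v$ vanishes.
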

\begin{proof} 
The operator is the one in Lemma~\ref{lem:Ch0} restricted to boundary normal facet bubbles. 
Then,~\eqref{eq:constrBernardi-Raugel} implies~\ref{itm:Ch0-trace-global}. 
Furthermore, the $W^{1,p}$-stability follows from~\eqref{eq:BR_Chdiv-stab}  via inverse estimates and Hölder's inequality. 
	
\end{proof}

Based on the homogeneous Fortin operator in Assumption~\ref{ass:Fortin_p} we construct a correction operator~$C_h^1$. 
Since it is based on a global operator, the scaling in the stability estimate is not optimal.

\begin{lemma}[global operator $C_h^1$]\label{lem:Chdiv_global}
	Let Assumption~\ref{ass:Fortin_p} be satisfied for some~$p \in (1,\infty)$.  
	Then, there exists an operator $	C_h^1\colon  \Wpsim\to X_{h,0}$ with the following properties 
	\begin{enumerate}[label=(\roman*)]
	\item \label{itm:Ch1_gl_div} (divergence preservation) For any $v\in \Wpsim$ one has
	\begin{align}\label{eq:div-Chdiv}
		\skp{\divergence C_h^1 (v)}{q_h} 
		= 
		\skp{\divergence v}{q_h} \qquad 
		\text{for all } q_h\in \overline{Q}_h.
	\end{align}
	\item \label{itm:Ch1_gl_stab} (global stability) There exists a constant such that 
	\begin{align}\label{eq:stab-Chdiv}
	\norm{C_h^1 v}_{W^{1,p}(\Omega)} \lesssim \norm{\nabla v}_{L^p(\Omega)} \qquad \text{for all } v\in \Wpsim,
	\end{align}
	uniformly in $h$.  
\end{enumerate}
If there exists an operator $\Pi_{h,0}^{p}$ as in Assumption~\ref{ass:Fortin_p}, which is independent of~$p \in (1, \infty)$, then so is~$C_h^1$, and if  $\Pi_{h,0}^{p}$ is linear, then $C_h^1$ is linear as well.      
	
\end{lemma}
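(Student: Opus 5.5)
The plan is to reduce to the homogeneous case by subtracting a lifting of the divergence defect, using the continuous Bogovski\u{\i} operator together with $\Pi_{h,0}^p$. Let $v\in\Wpsim$. Then $\int_\Omega \divergence v\dx = \int_{\partial\Omega}\tr(v)\cdot n\dsig = 0$, so $\divergence v\in L^p_0(\Omega)$. Since $\Omega$ is a bounded Lipschitz domain, there is a bounded linear Bogovski\u{\i} operator $\mathcal{B}\colon L^p_0(\Omega)\to W^{1,p}_0(\Omega)^d$ with $\divergence\mathcal{B}g=g$ and $\norm{\nabla\mathcal{B}g}_{L^p(\Omega)}\lesssim\norm{g}_{L^p(\Omega)}$ (see \cite{Bogovskii1979,DieningRuzickaSchumacher2010}; cf.\ Remark~\ref{rmk:Bogovskii}). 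The operator is independent of $p$, and its constant depends only on $p$ and $\Omega$.

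We then define
\begin{align*}
C_h^1 v \coloneqq \Pi_{h,0}^{p}\bigl(\mathcal{B}(\divergence v)\bigr)\in X_{h,0}.
\end{align*}
For divergence preservation, Assumption~\ref{ass:Fortin_p}~\ref{itm:Fortin_p-div} applied to $w\coloneqq\mathcal{B}(\divergence v)\in W^{1,p}_0(\Omega)^d$ gives, for all $q_h\in\overline Q_h$,
\begin{align*}
\skp{\divergence C_h^1 v}{q_h}=\skp{\divergence w}{q_h}=\skp{\divergence v}{q_h}.
\end{align*}
This proves \ref{itm:Ch1_gl_div}.

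For stability, Assumption~\ref{ass:Fortin_p}~\ref{itm:Fortin_p-stab} and the Bogovski\u{\i} bound give
\begin{align*}
\norm{\nabla C_h^1 v}_{L^p(\Omega)}\le c_F\norm{\nabla w}_{L^p(\Omega)}\lesssim\norm{\divergence v}_{L^p(\Omega)}\lesssim\norm{\nabla v}_{L^p(\Omega)}.
\end{align*}
Since $C_h^1 v\in W^{1,p}_0(\Omega)^d$, Poincar\'e's inequality upgrades this to the full $W^{1,p}$-norm, which proves \ref{itm:Ch1_gl_stab} with constants depending only on $p$, $\Omega$ and $c_F$.

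The structural claims follow from the composition. If $\Pi_{h,0}^p$ is independent of $p$, then so is $C_h^1$, because $\mathcal{B}$ and $\divergence$ are. If $\Pi_{h,0}^p$ is linear, $C_h^1$ is a composition of linear maps and hence linear.

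The only point needing care is the existence of a single $p$-independent Bogovski\u{\i} operator that is bounded on every $L^p_0$ for $p\in(1,\infty)$. For Lipschitz (indeed John) domains this is classical, so I expect no real obstacle. The lack of locality and the suboptimal scaling come only from the global nature of $\mathcal{B}$ and $\Pi_{h,0}^p$.
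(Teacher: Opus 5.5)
Your proposal is correct and is essentially the paper's proof. The paper also sets $C_h^1 v \coloneqq \Pi_{h,0}^{p}\,\mathcal{B}(\divergence v)$ with a $p$-independent Bogovski\u{\i} operator $\mathcal{B}$, and it verifies divergence preservation, the $W^{1,p}$-bound via Poincar\'e's inequality on $X_{h,0}$, and the linearity and $p$-independence claims exactly as you do.
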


\begin{proof}
	Since $\Omega$ is a Lipschitz domain, by Remark~\ref{rmk:Bogovskii} there exists a bounded linear operator $\mathcal{B} \colon L^p_0(\Omega) \to W^{1,p}_0(\Omega)^d$ for~$p \in (1,\infty)$  such that  
		\begin{alignat}{3}
			\divergence \mathcal{B} q  
			&= q  \qquad  
			&&\text{ for all } q \in L^p_0(\Omega), 
			\label{eq:Bog-div}
			\\
			\norm{\nabla \mathcal{B} q}_{L^p(\Omega)} 
			&\leq c \norm{q}_{L^p(\Omega)}  \qquad
			&&\text{ for all } q \in L^p_0(\Omega),
			\label{eq:Bog-stab}
		\end{alignat}	
	with constant $c$ depending only on $p$.   
	Note that the operator~$\mathcal{B}$ is independent of~$p$.  
	
	Now, with the homogeneous Fortin operator $\Pi_{h,0}^{p} \colon W^{1,p}_0(\Omega)^d \to X_{h,0}$ in Assumption~\ref{ass:Fortin_p} we set 
	\begin{align}\label{def:Ch1-global}
		C_h^1 v \coloneqq \Pi_{h,0}^{p}\, \mathcal{B}(\divergence v) \qquad \text{ for } v \in \Wpsim. 
	\end{align}
	This is well-defined since for $v \in \Wpsim$ we have $\divergence v \in L^p_0(\Omega)$, and hence $\mathcal{B}(\divergence v) \in W^{1,p}_0(\Omega)^d$. 
	
 Thanks to the divergence preservation of $\Pi_{h,0}^{p}$ in Assumption~\ref{ass:Fortin_p}~\ref{itm:Fortin_p-div} and applying~\eqref{eq:Bog-div} for any~$v\in \Wpsim$ and for any $q_h\in \overline{Q}_h$ we have that
	\begin{align*}
		\skp{\divergence C_h^1 (v)}{q_h} = \skp{\divergence \Pi_{h,0}^{p}\, \mathcal{B}(\divergence v)}{q_h} = \skp{\divergence \mathcal{B}(\divergence v)}{q_h} = \skp{\divergence v}{q_h},
	\end{align*}
	which shows~\ref{itm:Ch1_gl_div}. 
	
The stability in~\ref{itm:Ch1_gl_stab} is inherited from Assumption~\ref{ass:Fortin_p}~\ref{itm:Fortin_p-stab} and \eqref{eq:Bog-stab} as 
	\begin{align*}
		\norm{\nabla C_h^1 v}_{L^p(\Omega)} 
		&= \norm{\nabla \Pi_{h,0}^{p}\, \mathcal{B}(\divergence v)}_{L^p(\Omega)} 
		\leq
		 c_{F} \norm{\nabla \mathcal{B}(\divergence v)}_{L^p(\Omega)} 
		 \\
		&\lesssim \norm{\divergence v}_{L^p(\Omega)} 
		\lesssim \norm{\nabla v}_{L^p(\Omega)}.
	\end{align*}
By Poincaré's inequality and the fact that $C_h^1$ maps to $X_{h,0} \subset W^{1,p}_0(\Omega)^d$  we also have
 		\begin{align*}
 		\norm{C_h^1 v}_{L^p(\Omega)}\lesssim  	\norm{\nabla C_h^1 v}_{L^p(\Omega)} \lesssim \norm{\nabla v}_{L^p(\Omega)},
 	\end{align*}
 	which shows~\ref{itm:Ch1_gl_stab}. 
	By the construction in~\eqref{def:Ch1-global} the operator~$C_h^1$ inherits linearity and $p$-dependence from~$\Pi_{h,0}^{p}$, since $\mathcal{B}$ is linear and independent of $p \in (1,\infty)$.  
	This proves the claim. 
\end{proof}

Compared to the locally constructed Fortin operators in the previous section we obtain fewer properties for~$\Pi_h^p$, but the result is applicable for general inf-sup stable FEM pairs.  

\begin{proposition}\label{prop:Fortin-global}
Let Assumption~\ref{ass:FEM} be satisfied with $k \in \mathbb{N}$, and  let Assumption~\ref{ass:Fortin_p} be satisfied for some~$p \in (1,\infty)$. 
		Further, assume that $B_n^\partial(\tria_h) \subset X_h$. 
	
	 The operator~$\Pi_h^p\colon  W^{1,p}(\Omega)^d\to X_h$ defined in~\eqref{eq:constr_Fortin-global} with~$I_h$ as in Lemma~\ref{lem:SZinterp} for the space $X_h$, with~$C_h^0$ as in Lemma~\ref{lem:Ch0_global}, and with $C_h^1$ as in Lemma~\ref{lem:Chdiv_global}  is a Fortin operator satisfying the properties~\ref{prop:div-pres}, \ref{prop:trace-pres} and \ref{prop:global-stab-approx}~\ref{itm:prop_global_H1semi-stab}, \ref{itm:prop_global_Sob-stab} for the exponent~$p \in (1,\infty)$ from Assumption~\ref{ass:Fortin_p}. 
	 Furthermore, it satisfies the following version of~\ref{prop:global-stab-approx}~\ref{itm:prop_global_approx} for $1 \leq m \leq k + 1$  
		\begin{align*}
	 		\norm{v - \Pi_h^p v }_{L^p(\Omega)} +	\norm{\nabla(v - \Pi_h^p v) }_{L^p(\Omega)} \lesssim h^{m-1} \norm{\nabla^m v}_{L^p(\Omega)}
	 	\end{align*}
	 	for any $v \in W^{m,p}(\Omega)^d$, uniformly in $h$. 
	 
	 With a suitable version of $I_h$ also \ref{prop:trace-pres}$_\Gamma$ can be achieved, see Remark~\ref{rmk:properties_Fortin}~\ref{itm:rmk_part-bd}.   
	Additionally, if $\Pi_{h,0}^{p}$ as in Assumption~\ref{ass:Fortin_p} is linear, then~$\Pi_h^p$ satisfies~\ref{prop:lin-proj}. 
	If~$\Pi_{h,0}^{p}$ is independent of $p$, so is $\Pi_h^p$. 
\end{proposition}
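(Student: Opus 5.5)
Following the proof of Proposition~\ref{prop:Chj_operators}, I write $w \coloneqq v - I_h v$ and $\tilde w \coloneqq (I - C_h^0) w$, so that $\Pi_h^p v = v - \tilde w + C_h^1 \tilde w$. The first step is to check that $C_h^1$ can be applied, i.e.\ that $\tilde w \in \Wpsim$. Indeed, \eqref{eq:constrBernardi-Raugel-bd} gives $\int_f \tr(\tilde w)\cdot n \dsig = 0$ for every boundary facet $f \in \facets^\partial(\tria_h)$. Summing over boundary facets yields $\int_{\partial\Omega}\tr(\tilde w)\cdot n\dsig=0$. Hence $\tilde w\in \Wpsim$ and $\Pi_h^p$ is well defined on $W^{1,p}(\Omega)^d$.

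\emph{Divergence preservation.} By \eqref{eq:div-Chdiv}, $\skp{\divergence(\tilde w - C_h^1\tilde w)}{q_h}=0$ for all $q_h\in\overline Q_h$, and $v-\Pi_h^p v=\tilde w - C_h^1\tilde w$. This gives \ref{prop:div-pres}.

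\emph{Traces.} $I_h$ and $C_h^0$ preserve zero traces and zero normal traces (Lemma~\ref{lem:SZinterp}, Lemma~\ref{lem:Ch0_global}), and $C_h^1$ maps into $X_{h,0}$. Together these give \ref{prop:trace-pres}~\ref{itm:global_zerotrace} and \ref{itm:global_zeronormaltrace}. For \ref{itm:global_meantrace}, the boundary normal trace of $v-\Pi_h^p v$ equals that of $\tilde w$, whose boundary mean vanishes by the first step. For \ref{prop:trace-pres}$_\Gamma$, I choose $I_h$ as in Remark~\ref{rmk:SZ-discr-traces}~\ref{itm:SZ-part-bd} and use that $C_h^0$ acts locally on each boundary facet.

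\emph{Stability and approximation.} This is the step needing care, because $C_h^1$ is only globally stable and only with respect to the seminorm $\norm{\nabla\cdot}_{L^p}$. I sum the local estimates of Lemma~\ref{lem:Ch0_global}~\ref{itm:Ch0-stab-global} over $T$ and use $h_T\le h\lesssim 1$ (the domain is bounded). Combined with \eqref{eq:stab-Chdiv} this gives
\begin{align*}
\norm{\tilde w - C_h^1\tilde w}_{W^{1,p}(\Omega)}\lesssim \norm{\tilde w}_{W^{1,p}(\Omega)} \lesssim \norm{w}_{L^p(\Omega)}+\norm{\nabla w}_{L^p(\Omega)} .
\end{align*}
The second inequality needs the $h_T^{-1}$ factor on the $L^p$ part of the local estimate for $\nabla C_h^0$ to be harmless. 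I will handle this by estimating $\norm{\nabla C_h^0 w}_{L^p(T)}\lesssim h_T^{-1}\norm{w}_{L^p(T)}+\norm{\nabla w}_{L^p(T)}$. Inserting the local approximation property of $I_h$ (Proposition~\ref{prop:SZinterp}~\ref{itm:Ih-local_approx}) with $\ell\in\{0,1\}$ then gives $h_T^{-1}\norm{w}_{L^p(T)}+\norm{\nabla w}_{L^p(T)}\lesssim h_T^{m-1}\norm{\nabla^m v}_{L^p(\Omega_h^1(T))}$. Summing $p$th powers with finite overlap yields the stated estimate with $h^{m-1}$; the loss of one power in the $L^p$ part comes from $C_h^1$.

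For stability, the case $m=1$ gives $\norm{v-\Pi_h^p v}_{W^{1,p}}\lesssim\norm{\nabla v}_{L^p}$, hence \ref{prop:global-stab-approx}~\ref{itm:prop_global_H1semi-stab} and \ref{itm:prop_global_Sob-stab} by the triangle inequality.

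\emph{Linearity and $p$-independence.} If $\Pi_{h,0}^p$ is linear, then $C_h^1$ is linear by Lemma~\ref{lem:Chdiv_global}, hence so is $\Pi_h^p$. For the projection property, $v\in X_h$ gives $w=0$ by Lemma~\ref{lem:SZinterp}, so $\Pi_h^p v=v$ and \ref{prop:lin-proj} holds. The $p$-independence of $\Pi_h^p$ follows from that of $\Pi_{h,0}^p$, since $\mathcal B$, $I_h$ and $C_h^0$ are all independent of $p$.
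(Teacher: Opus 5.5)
Your proposal is correct and follows the paper's proof essentially step by step. It uses the same verification that $(I-C_h^0)(v-I_hv)\in\Wpsim$, the same divergence and trace arguments, and the same stability and approximation chain, in which $\nabla C_h^0(v-I_hv)$ is bounded elementwise by $h_T^{-1}\norm{v-I_hv}_{L^p(T)}+\norm{\nabla(v-I_hv)}_{L^p(T)}$ before the approximation property of $I_h$ is inserted. The one blemish is the intermediate bound $\norm{\tilde w}_{W^{1,p}(\Omega)}\lesssim\norm{w}_{L^p(\Omega)}+\norm{\nabla w}_{L^p(\Omega)}$ in your display: it is not uniform in $h$ as written and should carry the weight $h_T^{-1}$ on the $L^p$ part, which is exactly what you use in the next sentence.
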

\begin{proof}	
	The divergence property~\ref{prop:div-pres} is a consequence of the construction of $\Pi_h^p$ in~\eqref{eq:constr_Fortin-global}: 
	Indeed, for arbitrary~$v \in W^{1,p}(\Omega)^d$ let~$w \coloneqq v - I_h v$ with~$I_h$ as in Lemma~\ref{lem:SZinterp}. 
	By Lemma~\ref{lem:Ch0_global}~\ref{itm:Ch0-trace-global} it follows that~$w - C_h^0 w  \in \Wpsim$.  
	Then, Lemma~\ref{lem:Chdiv_global}~\ref{itm:Ch1_gl_div} shows that for any~$q_h \in \overline Q_h$ we have 
	\begin{align*}
		\skp{\divergence(v - \Pi_h^p v)}{q_h} 
		= 
		\skp{\divergence((I - C_h^1)(I-C_h^0) (w))}{q_h}  = 0,
	\end{align*}  
	which shows \ref{prop:div-pres}.
	
	The verification of the trace-preserving properties in~\ref{prop:trace-pres} proceeds analogously as the one in the proof of Proposition~\ref{prop:Chj_operators}, and the same is true for~\ref{prop:trace-pres}$_{\Gamma}$ thanks to the locality of $C_h^0$ in terms of boundary facets, see Lemma~\ref{lem:Ch0_global}. 
	
	It remains to establish the stability and approximation properties.  
	By the definition of $ \Pi_h^p$,  Lemma~\ref{lem:Chdiv_global}~\ref{itm:Ch1_gl_stab} for $C_h^1$, Lemma~\ref{lem:Ch0_global}~\ref{itm:Ch0-stab-global} for $C_h^0$, and the approximation properties of~$I_h$ in Lemma~\ref{lem:SZinterp}, see Proposition~\ref{prop:SZinterp}~\ref{itm:Ih-local_approx},  we obtain for $1 \leq m \leq k+ 1 $ and for any $v \in W^{m,p}(\Omega)^d$ that  
	\begin{equation}\label{est:approx-seminorm}
	\begin{aligned}
		\norm{\nabla(v - \Pi_h^p v )}_{L^p(\Omega)} 
			& \leq 
			\norm{\nabla(C_h^1(I - C_h^0)(I - I_h) v )}_{L^p(\Omega)}
			+ \norm{\nabla((I-C_h^0)(I - I_h) v)}_{L^p(\Omega)}
			\\
			& \lesssim 				\norm{\nabla((I-C_h^0)(I - I_h) v )}_{L^p(\Omega)} \\
			& \lesssim 
			\norm{\nabla(C_h^0(I - I_h) v )}_{L^p(\Omega)}	
			+ \norm{\nabla(v - I_hv)}_{L^p(\Omega)}	\\				& \lesssim 
			\norm{\tilde{h}^{-1}(v - I_hv)  }_{L^p(\Omega)}	
			+ \norm{\nabla(v - I_h v)  }_{L^p(\Omega)}	\\
			& \lesssim \norm{\tilde{h}^{m-1} \nabla^m v}_{L^p(\Omega)} 
			 \lesssim h^{m-1} \norm{ \nabla^m v}_{L^p(\Omega)},		
	\end{aligned}
	\end{equation}
	where $\tilde{h}$ denotes  the piecewise constant mesh size function with $\tilde{h}\big|_{T} = h_T$ for $T \in \tria_h$.  This shows~\ref{prop:global-stab-approx}~\ref{itm:prop_global_approx} for $\ell = 1$.  
Similarly, for the $L^p(\Omega)$-norm we obtain 
\begin{equation}\label{est:approx-Lpnorm}
	\begin{aligned}
	\norm{v - \Pi_h^p v }_{L^p(\Omega)} 
	& \leq 
	\norm{C_h^1(I - C_h^0)(I - I_h) v )}_{L^p(\Omega)}
	+ \norm{(I-C_h^0)(I - I_h) v}_{L^p(\Omega)}
	\\
	& \lesssim 				\norm{\nabla((I-C_h^0)(I - I_h) v )}_{L^p(\Omega)} + \norm{(I-C_h^0)(I - I_h) v}_{L^p(\Omega)} \\
	& 
	\lesssim h^{m-1} \norm{ \nabla^m v}_{L^p(\Omega)}.	
\end{aligned}
\end{equation}
Note that since the estimate for $C_h^1$ in $L^p(\Omega)$ is not of optimal order in $h$, this estimate is suboptimal. 
 
The stability in~\ref{prop:global-stab-approx}~\ref{itm:prop_global_H1semi-stab} follows from~\eqref{est:approx-seminorm} with $m = 1$  by using the triangle inequality.  
Combining this with the analogous estimate resulting from~\eqref{est:approx-Lpnorm} for $m = 1$ establishes the Sobolev stability in ~\ref{prop:global-stab-approx}~\ref{itm:prop_global_Sob-stab}.

Since $C_h^0$ is linear and independent of $p$, $\Pi_h^p$ inherits linearity and independence of $p$ from $C_h^1$, see Lemma~\ref{lem:Chdiv_global}. 
This proves the claim.      
\end{proof}

\begin{remark}\label{rmk:corSZviaCor}
	\begin{enumerate}
		\item 
Modifying the Scott--Zhang operator $I_h$ in Lemma~\ref{lem:SZinterp} using facet bubble functions allows us to define an operator~$\overline I_h \colon W^{1,1}(\Omega)^d \to X_h$ by
		\begin{align}\label{def:corSZviaCor}
			\overline{I}_h v \coloneqq I_h v + C_h^0 (v-I_h v),
		\end{align}	
		with $C_h^0$ as in Lemma~\ref{lem:Ch0_global}.
		 It 
		satisfies all properties of the operator~$I_h$, cf.~Proposition~\ref{prop:SZinterp}, and additionally
		\begin{align} \label{eq:compat}
			\int_{\partial\Omega} \tr(v-\overline{I}_h v)\cdot n \dsig = 0\qquad \text{for any } v\in W^{1,1}(\Omega)^d.
		\end{align}
		A similar construction is presented in~\cite[Rmk.~5.4.10]{Bernardi2024}. 
		Therein, an analogous modification of the Lagrange operator is presented for functions in~$C(\overline \Omega)^d$.  
		Note in particular that with $C_h^1$ mapping to~$W^{1,1}_0(\Omega)^d$ one has that 
		\begin{align*}
		\tr(\Pi_h^p v)  = 	\tr(\overline{I}_h v) \quad \text{ on }{\partial \Omega}.
		\end{align*}
		\item 
		An operator of the form~\eqref{def:corSZviaCor} is relevant for the implementation of inhomogeneous Dirichlet boundary conditions for exactly divergence-free finite element spaces, see~\cite{Heister2016,Eickmann2025a}.
		There, the condition~\eqref{eq:compat} is essential to ensure compatibility with the exact divergence constraints 
		for the discrete Dirichlet data.  
	\end{enumerate}
\end{remark}

\subsection*{Acknowledgements}
We thank Lars Diening and L.~Ridgway Scott for useful discussions. 

During the preparation of this work the authors used ChatGPT-5.6 Sol to assist with the literature search and to design Figure~\ref{fig:vertexpatch-reference}. The authors take full responsibility for the content of the publication.

	\printbibliography
	
\end{document}